\documentclass[a4paper,12pt]{article}
\usepackage{amsfonts}
\usepackage{setspace}
\usepackage{wrapfig}
\usepackage{subfig}
\usepackage{graphicx}    
\usepackage[margin=1in]{geometry}
\usepackage{natbib}
\usepackage{bbm}
\usepackage{float}
\usepackage[toc,page]{appendix}
\usepackage{color}

\usepackage{mathtools}
\usepackage{amsmath}


\setcounter{MaxMatrixCols}{10}

\newtheorem{theorem}{Theorem}

\newtheorem{axiom}[theorem]{Axiom}

\newtheorem{conjecture}[theorem]{Conjecture}
\newtheorem{corollary}[theorem]{Corollary}

\newtheorem{definition}[theorem]{Definition}
\newtheorem{example}[theorem]{Example}
\newtheorem{exercise}[theorem]{Exercise}
\newtheorem{lemma}[theorem]{Lemma}

\newtheorem{proposition}[theorem]{Proposition}
\newtheorem{remark}[theorem]{Remark}

\newenvironment{proof}[1][Proof]{\noindent\textbf{#1.} }{\ \rule{0.5em}{0.5em}}
\typeout{TCILATEX Macros for Scientific Word 3.0 <05 August 1998>.}
\typeout{NOTICE:  This macro file is NOT proprietary and may be 
freely copied and distributed.}
\makeatletter
%
\newcount\@hour\newcount\@minute\chardef\@x10\chardef\@xv60
\def\tcitime{
\def\@time{%
  \@minute\time\@hour\@minute\divide\@hour\@xv
  \ifnum\@hour<\@x 0\fi\the\@hour:%
  \multiply\@hour\@xv\advance\@minute-\@hour
  \ifnum\@minute<\@x 0\fi\the\@minute
  }}%

\@ifundefined{hyperref}{}{}

\@ifundefined{qExtProgCall}{\def\qExtProgCall#1#2#3#4#5#6{\relax}}{}
%
%
%
%
\def\QCTOpt[#1]#2{%
  \def\QCTOptB{#1}
  \def\QCTOptA{#2}
}
\def\QCTNOpt#1{%
  \def\QCTOptA{#1}
  \let\QCTOptB\empty
}
\def\Qct{%
  \@ifnextchar[{%
    \QCTOpt}{\QCTNOpt}
}
\def\QCBOpt[#1]#2{%
  \def\QCBOptB{#1}
  \def\QCBOptA{#2}
}
\def\QCBNOpt#1{%
  \def\QCBOptA{#1}
  \let\QCBOptB\empty
}
\def\Qcb{%
  \@ifnextchar[{%
    \QCBOpt}{\QCBNOpt}
}
\def\PrepCapArgs{%
  \ifx\QCBOptA\empty
    \ifx\QCTOptA\empty
      {}%
    \else
      \ifx\QCTOptB\empty
        {\QCTOptA}%
      \else
        [\QCTOptB]{\QCTOptA}%
      \fi
    \fi
  \else
    \ifx\QCBOptA\empty
      {}%
    \else
      \ifx\QCBOptB\empty
        {\QCBOptA}%
      \else
        [\QCBOptB]{\QCBOptA}%
      \fi
    \fi
  \fi
}
\newcount\GRAPHICSTYPE
\GRAPHICSTYPE=\z@
\def\GRAPHICSPS#1{%
 \ifcase\GRAPHICSTYPE
   \special{ps: #1}%
 \or
   \special{language "PS", include "#1"}%
 \fi
}%
%
%
%
\def\graffile#1#2#3#4{%
    \bgroup
    \leavevmode
    \@ifundefined{bbl@deactivate}{\def~{\string~}}{\activesoff}
    \raise -#4 \BOXTHEFRAME{%
        \hbox to #2{\raise #3\hbox to #2{\null #1\hfil}}}%
    \egroup
}%
%
\def\draftbox#1#2#3#4{%
 \leavevmode\raise -#4 \hbox{%
  \frame{\rlap{\protect\tiny #1}\hbox to #2%
   {\vrule height#3 width\z@ depth\z@\hfil}%
  }%
 }%
}%
\newcount\draft
\draft=\z@

\newif\ifwasdraft
\wasdraftfalse

\def\GRAPHIC#1#2#3#4#5{%
 \ifnum\draft=\@ne\draftbox{#2}{#3}{#4}{#5}%
  \else\graffile{#1}{#3}{#4}{#5}%
  \fi
 }%
\def\addtoLaTeXparams#1{%
    \edef\LaTeXparams{\LaTeXparams #1}}%
%

\newif\ifBoxFrame \BoxFramefalse
\newif\ifOverFrame \OverFramefalse
\newif\ifUnderFrame \UnderFramefalse

\def\BOXTHEFRAME#1{%
   \hbox{%
      \ifBoxFrame
         \frame{#1}%
      \else
         {#1}%
      \fi
   }%
}

\def\doFRAMEparams#1{\BoxFramefalse\OverFramefalse\UnderFramefalse\readFRAMEparams#1\end}%
\def\readFRAMEparams#1{%
 \ifx#1\end%
  \let\next=\relax
  \else
  \ifx#1i\dispkind=\z@\fi
  \ifx#1d\dispkind=\@ne\fi
  \ifx#1f\dispkind=\tw@\fi
  \ifx#1t\addtoLaTeXparams{t}\fi
  \ifx#1b\addtoLaTeXparams{b}\fi
  \ifx#1p\addtoLaTeXparams{p}\fi
  \ifx#1h\addtoLaTeXparams{h}\fi
  \ifx#1X\BoxFrametrue\fi
  \ifx#1O\OverFrametrue\fi
  \ifx#1U\UnderFrametrue\fi
  \ifx#1w
    \ifnum\draft=1\wasdrafttrue\else\wasdraftfalse\fi
    \draft=\@ne
  \fi
  \let\next=\readFRAMEparams
  \fi
 \next
 }%
%

\def\IFRAME#1#2#3#4#5#6{%
      \bgroup
      \let\QCTOptA\empty
      \let\QCTOptB\empty
      \let\QCBOptA\empty
      \let\QCBOptB\empty
      #6%
      \parindent=0pt%
      \leftskip=0pt
      \rightskip=0pt
      \setbox0 = \hbox{\QCBOptA}%
      \@tempdima = #1\relax
      \ifOverFrame
          \typeout{This is not implemented yet}%
          \show\HELP
      \else
         \ifdim\wd0>\@tempdima
            \advance\@tempdima by \@tempdima
            \ifdim\wd0 >\@tempdima
               \textwidth=\@tempdima
               \setbox1 =\vbox{%
                  \noindent\hbox to \@tempdima{\hfill\GRAPHIC{#5}{#4}{#1}{#2}{#3}\hfill}\\%
                  \noindent\hbox to \@tempdima{\parbox[b]{\@tempdima}{\QCBOptA}}%
               }%
               \wd1=\@tempdima
            \else
               \textwidth=\wd0
               \setbox1 =\vbox{%
                 \noindent\hbox to \wd0{\hfill\GRAPHIC{#5}{#4}{#1}{#2}{#3}\hfill}\\%
                 \noindent\hbox{\QCBOptA}%
               }%
               \wd1=\wd0
            \fi
         \else
            \ifdim\wd0>0pt
              \hsize=\@tempdima
              \setbox1 =\vbox{%
                \unskip\GRAPHIC{#5}{#4}{#1}{#2}{0pt}%
                \break
                \unskip\hbox to \@tempdima{\hfill \QCBOptA\hfill}%
              }%
              \wd1=\@tempdima
           \else
              \hsize=\@tempdima
              \setbox1 =\vbox{%
                \unskip\GRAPHIC{#5}{#4}{#1}{#2}{0pt}%
              }%
              \wd1=\@tempdima
           \fi
         \fi
         \@tempdimb=\ht1
         \advance\@tempdimb by \dp1
         \advance\@tempdimb by -#2%
         \advance\@tempdimb by #3%
         \leavevmode
         \raise -\@tempdimb \hbox{\box1}%
      \fi
      \egroup%
}%
%
\def\DFRAME#1#2#3#4#5{%
 \begin{center}
     \let\QCTOptA\empty
     \let\QCTOptB\empty
     \let\QCBOptA\empty
     \let\QCBOptB\empty
     \ifOverFrame 
        #5\QCTOptA\par
     \fi
     \GRAPHIC{#4}{#3}{#1}{#2}{\z@}
     \ifUnderFrame 
        \nobreak\par\nobreak#5\QCBOptA
     \fi
 \end{center}%
 }%
%
\def\FFRAME#1#2#3#4#5#6#7{%
  \@ifundefined{floatstyle}
    {
     \begin{figure}[#1]%
    }
    {
	 \ifx#1h
      \begin{figure}[H]%
	 \else
      \begin{figure}[#1]%
	 \fi
	}
  \let\QCTOptA\empty
  \let\QCTOptB\empty
  \let\QCBOptA\empty
  \let\QCBOptB\empty
  \ifOverFrame
    #4
    \ifx\QCTOptA\empty
    \else
      \ifx\QCTOptB\empty
        \caption{\QCTOptA}%
      \else
        \caption[\QCTOptB]{\QCTOptA}%
      \fi
    \fi
    \ifUnderFrame\else
      \label{#5}%
    \fi
  \else
    \UnderFrametrue%
  \fi
  \begin{center}\GRAPHIC{#7}{#6}{#2}{#3}{\z@}\end{center}%
  \ifUnderFrame
    #4
    \ifx\QCBOptA\empty
      \caption{}%
    \else
      \ifx\QCBOptB\empty
        \caption{\QCBOptA}%
      \else
        \caption[\QCBOptB]{\QCBOptA}%
      \fi
    \fi
    \label{#5}%
  \fi
  \end{figure}%
 }%
%
%
%
%
%
\newcount\dispkind%

\def\makeactives{
  \catcode`\"=\active
  \catcode`\;=\active
  \catcode`\:=\active
  \catcode`\'=\active
  \catcode`\~=\active
}
\bgroup
   \makeactives
   \gdef\activesoff{%
      \def"{\string"}
      \def;{\string;}
      \def:{\string:}
      \def'{\string'}
      \def~{\string~}
    }
\egroup

\def\FRAME#1#2#3#4#5#6#7#8{%
 \bgroup
 \ifnum\draft=\@ne
   \wasdrafttrue
 \else
   \wasdraftfalse%
 \fi
 \def\LaTeXparams{}%
 \dispkind=\z@
 \def\LaTeXparams{}%
 \doFRAMEparams{#1}%
 \ifnum\dispkind=\z@\IFRAME{#2}{#3}{#4}{#7}{#8}{#5}\else
  \ifnum\dispkind=\@ne\DFRAME{#2}{#3}{#7}{#8}{#5}\else
   \ifnum\dispkind=\tw@
    \edef\@tempa{\noexpand\FFRAME{\LaTeXparams}}%
    \@tempa{#2}{#3}{#5}{#6}{#7}{#8}%
    \fi
   \fi
  \fi
  \ifwasdraft\draft=1\else\draft=0\fi{}%
  \egroup
 }%
%

\def\TEXUX#1{"texux"}

%
%
%
%
%
%
%
%
%
%

%
\long\def\QQQ#1#2{%
     \long\expandafter\def\csname#1\endcsname{#2}}%
\@ifundefined{QTP}{\def\QTP#1{}}{}
\@ifundefined{QEXCLUDE}{\def\QEXCLUDE#1{}}{}
\@ifundefined{Qlb}{}{}
\@ifundefined{Qlt}{}{}
\long\def\QQA#1#2{}%
\def\QTR#1#2{{\csname#1\endcsname #2}}
\def\EXPAND#1[#2]#3{}%
\def\NOEXPAND#1[#2]#3{}%
\def\LaTeXparent#1{}%
\def\ChildStyles#1{}%
\def\ChildDefaults#1{}%
\def\QTagDef#1#2#3{}%

\@ifundefined{correctchoice}{}{}
\@ifundefined{HTML}{\def\HTML#1{\relax}}{}
\@ifundefined{TCIIcon}{\def\TCIIcon#1#2#3#4{\relax}}{}
\if@compatibility
  \typeout{Not defining UNICODE or CustomNote commands for LaTeX 2.09.}
\else
  \providecommand{\UNICODE}[2][]{}
  
\fi

%
\@ifundefined{StyleEditBeginDoc}{}{}
%
\def\QQfnmark#1{\footnotemark}

%
%
\@ifundefined{TCIMAKEINDEX}{}{\makeindex}%
%
\@ifundefined{abstract}{%
 \def\abstract{%
  \if@twocolumn
   \section*{Abstract (Not appropriate in this style!)}%
   \else \small 
   \begin{center}{\bf Abstract\vspace{-.5em}\vspace{\z@}}\end{center}%
   \quotation 
   \fi
  }%
 }{%
 }%
\@ifundefined{endabstract}{\def\endabstract
  {\if@twocolumn\else\endquotation\fi}}{}%
\@ifundefined{maketitle}{\def\maketitle#1{}}{}%
\@ifundefined{affiliation}{\def\affiliation#1{}}{}%
\@ifundefined{proof}{}{}%
\@ifundefined{endproof}{}{}%
\@ifundefined{newfield}{\def\newfield#1#2{}}{}%
\@ifundefined{chapter}{\def\chapter#1{\par(Chapter head:)#1\par }%
 \newcount\c@chapter}{}%
\@ifundefined{part}{\def\part#1{\par(Part head:)#1\par }}{}%
\@ifundefined{section}{\def\section#1{\par(Section head:)#1\par }}{}%
\@ifundefined{subsection}{\def\subsection#1%
 {\par(Subsection head:)#1\par }}{}%
\@ifundefined{subsubsection}{\def\subsubsection#1%
 {\par(Subsubsection head:)#1\par }}{}%
\@ifundefined{paragraph}{\def\paragraph#1%
 {\par(Subsubsubsection head:)#1\par }}{}%
\@ifundefined{subparagraph}{\def\subparagraph#1%
 {\par(Subsubsubsubsection head:)#1\par }}{}%
\@ifundefined{therefore}{}{}%
\@ifundefined{backepsilon}{}{}%
\@ifundefined{yen}{}{}%
\@ifundefined{registered}{%
   \def\registered{\relax\ifmmode{}\r@gistered
                    \else$\m@th\r@gistered$\fi}%
 \def\r@gistered{^{\ooalign
  {\hfil\raise.07ex\hbox{$\scriptstyle\rm\text{R}$}\hfil\crcr
  \mathhexbox20D}}}}{}%
\@ifundefined{Eth}{}{}%
\@ifundefined{eth}{}{}%
\@ifundefined{Thorn}{}{}%
\@ifundefined{thorn}{}{}%
%
\@ifundefined{degree}{}{}%
%
\newdimen\theight
\def\Column{%
 \vadjust{\setbox\z@=\hbox{\scriptsize\quad\quad tcol}%
  \theight=\ht\z@\advance\theight by \dp\z@\advance\theight by \lineskip
  \kern -\theight \vbox to \theight{%
   \rightline{\rlap{\box\z@}}%
   \vss
   }%
  }%
 }%
\def\qed{%
 \ifhmode\unskip\nobreak\fi\ifmmode\ifinner\else\hskip5\p@\fi\fi
 \hbox{\hskip5\p@\vrule width4\p@ height6\p@ depth1.5\p@\hskip\p@}%
 }%
\def\miss{\hbox{\vrule height2\p@ width 2\p@ depth\z@}}%
%
%
\def\tcol#1{{\baselineskip=6\p@ \vcenter{#1}} \Column}  %
%
%
\@ifundefined{note}{}{}%

\def\newfmtname{LaTeX2e}

\ifx\fmtname\newfmtname
  \DeclareOldFontCommand{\rm}{\normalfont\rmfamily}{\mathrm}
  \DeclareOldFontCommand{\sf}{\normalfont\sffamily}{\mathsf}
  \DeclareOldFontCommand{\tt}{\normalfont\ttfamily}{\mathtt}
  \DeclareOldFontCommand{\bf}{\normalfont\bfseries}{\mathbf}
  \DeclareOldFontCommand{\it}{\normalfont\itshape}{\mathit}
  \DeclareOldFontCommand{\sl}{\normalfont\slshape}{\@nomath\sl}
  \DeclareOldFontCommand{\sc}{\normalfont\scshape}{\@nomath\sc}
\fi

%

\def\alpha{{\Greekmath 010B}}%
\def\beta{{\Greekmath 010C}}%
\def\gamma{{\Greekmath 010D}}%
\def\delta{{\Greekmath 010E}}%
\def\epsilon{{\Greekmath 010F}}%
\def\zeta{{\Greekmath 0110}}%
\def\eta{{\Greekmath 0111}}%
\def\theta{{\Greekmath 0112}}%
\def\iota{{\Greekmath 0113}}%
\def\kappa{{\Greekmath 0114}}%
\def\lambda{{\Greekmath 0115}}%
\def\mu{{\Greekmath 0116}}%
\def\nu{{\Greekmath 0117}}%
\def\xi{{\Greekmath 0118}}%
\def\pi{{\Greekmath 0119}}%
\def\rho{{\Greekmath 011A}}%
\def\sigma{{\Greekmath 011B}}%
\def\tau{{\Greekmath 011C}}%
\def\upsilon{{\Greekmath 011D}}%
\def\phi{{\Greekmath 011E}}%
\def\chi{{\Greekmath 011F}}%
\def\psi{{\Greekmath 0120}}%
\def\omega{{\Greekmath 0121}}%
\def\varepsilon{{\Greekmath 0122}}%
\def\vartheta{{\Greekmath 0123}}%
\def\varpi{{\Greekmath 0124}}%
\def\varrho{{\Greekmath 0125}}%
\def\varsigma{{\Greekmath 0126}}%
\def\varphi{{\Greekmath 0127}}%

\def\nabla{{\Greekmath 0272}}
\def\FindBoldGroup{%
   {\setbox0=\hbox{$\mathbf{x\global\edef\theboldgroup{\the\mathgroup}}$}}%
}

\def\Greekmath#1#2#3#4{%
    \if@compatibility
        \ifnum\mathgroup=\symbold
           \mathchoice{\mbox{\boldmath$\displaystyle\mathchar"#1#2#3#4$}}%
                      {\mbox{\boldmath$\textstyle\mathchar"#1#2#3#4$}}%
                      {\mbox{\boldmath$\scriptstyle\mathchar"#1#2#3#4$}}%
                      {\mbox{\boldmath$\scriptscriptstyle\mathchar"#1#2#3#4$}}%
        \else
           \mathchar"#1#2#3#4%
        \fi 
    \else 
        \FindBoldGroup
        \ifnum\mathgroup=\theboldgroup 
           \mathchoice{\mbox{\boldmath$\displaystyle\mathchar"#1#2#3#4$}}%
                      {\mbox{\boldmath$\textstyle\mathchar"#1#2#3#4$}}%
                      {\mbox{\boldmath$\scriptstyle\mathchar"#1#2#3#4$}}%
                      {\mbox{\boldmath$\scriptscriptstyle\mathchar"#1#2#3#4$}}%
        \else
           \mathchar"#1#2#3#4%
        \fi     	    
	  \fi}

\newif\ifGreekBold  \GreekBoldfalse
\let\SAVEPBF=\pbf
\def\pbf{\GreekBoldtrue\SAVEPBF}%

\@ifundefined{theorem}{\newtheorem{theorem}{Theorem}}{}
\@ifundefined{lemma}{\newtheorem{lemma}[theorem]{Lemma}}{}
\@ifundefined{corollary}{}{}
\@ifundefined{conjecture}{}{}
\@ifundefined{proposition}{}{}
\@ifundefined{axiom}{}{}
\@ifundefined{remark}{}{}
\@ifundefined{example}{}{}
\@ifundefined{exercise}{}{}
\@ifundefined{definition}{}{}

\@ifundefined{mathletters}{%
  \newcounter{equationnumber}  
  \def\mathletters{%
     \addtocounter{equation}{1}
     \edef\@currentlabel{\theequation}%
     \setcounter{equationnumber}{\c@equation}
     \setcounter{equation}{0}%
     \edef\theequation{\@currentlabel\noexpand\alph{equation}}%
  }
  
}{}

\@ifundefined{BibTeX}{%
    \def\BibTeX{{\rm B\kern-.05em{\sc i\kern-.025em b}\kern-.08em
                 T\kern-.1667em\lower.7ex\hbox{E}\kern-.125emX}}}{}%
\@ifundefined{AmS}%
    {\def\AmS{{\protect\usefont{OMS}{cmsy}{m}{n}%
                A\kern-.1667em\lower.5ex\hbox{M}\kern-.125emS}}}{}%
\@ifundefined{AmSTeX}{}{}%
%

\def\@@eqncr{\let\@tempa\relax
    \ifcase\@eqcnt \def\@tempa{& & &}\or \def\@tempa{& &}%
      \else \def\@tempa{&}\fi
     \@tempa
     \if@eqnsw
        \iftag@
           \@taggnum
        \else
           \@eqnnum\stepcounter{equation}%
        \fi
     \fi
     \global\tag@false
     \global\@eqnswtrue
     \global\@eqcnt\z@\cr}

\def\TCItag{\@ifnextchar*{\@TCItagstar}{\@TCItag}}
\def\@TCItag#1{%
    \global\tag@true
    \global\def\@taggnum{(#1)}}
\def\@TCItagstar*#1{%
    \global\tag@true
    \global\def\@taggnum{#1}}
%
%
%
%
%
%
%
%
%
%
%
%
%
%
%
%
%
%
%
%
%
%
%
%
%
%
%
%
%
%
%
%
%
%
%
%
%
%
%
%
%
%
%
%
%
%
%
%
%
%
%
\def\dsum{\mathop{\displaystyle \sum }}%
%
%
%
%
%
%
%
%
%
%
%

%
%
\ifx\ds@amstex\relax
   \message{amstex already loaded}\makeatother 
\else
   \@ifpackageloaded{amsmath}%
      {\message{amsmath already loaded}\makeatother }
      {}
   \@ifpackageloaded{amstex}%
      {\message{amstex already loaded}\makeatother }
      {}
   \@ifpackageloaded{amsgen}%
      {\message{amsgen already loaded}\makeatother }
      {}
\fi
%
%
%
%
\let\DOTSI\relax
\def\RIfM@{\relax\ifmmode}%
\def\FN@{\futurelet\next}%
\newcount\intno@
\def\iint{\DOTSI\intno@\tw@\FN@\ints@}%
\def\iiint{\DOTSI\intno@\thr@@\FN@\ints@}%
\def\iiiint{\DOTSI\intno@4 \FN@\ints@}%
\def\idotsint{\DOTSI\intno@\z@\FN@\ints@}%
\def\ints@{\findlimits@\ints@@}%
\newif\iflimtoken@
\newif\iflimits@
\def\findlimits@{\limtoken@true\ifx\next\limits\limits@true
 \else\ifx\next\nolimits\limits@false\else
 \limtoken@false\ifx\ilimits@\nolimits\limits@false\else
 \ifinner\limits@false\else\limits@true\fi\fi\fi\fi}%
\def\multint@{\int\ifnum\intno@=\z@\intdots@                          
 \else\intkern@\fi                                                    
 \ifnum\intno@>\tw@\int\intkern@\fi                                   
 \ifnum\intno@>\thr@@\int\intkern@\fi                                 
 \int}
\def\multintlimits@{\intop\ifnum\intno@=\z@\intdots@\else\intkern@\fi
 \ifnum\intno@>\tw@\intop\intkern@\fi
 \ifnum\intno@>\thr@@\intop\intkern@\fi\intop}%
\def\intic@{%
    \mathchoice{\hskip.5em}{\hskip.4em}{\hskip.4em}{\hskip.4em}}%
\def\negintic@{\mathchoice
 {\hskip-.5em}{\hskip-.4em}{\hskip-.4em}{\hskip-.4em}}%
\def\ints@@{\iflimtoken@                                              
 \def\ints@@@{\iflimits@\negintic@
   \mathop{\intic@\multintlimits@}\limits                             
  \else\multint@\nolimits\fi                                          
  \eat@}
 \else                                                                
 \def\ints@@@{\iflimits@\negintic@
  \mathop{\intic@\multintlimits@}\limits\else
  \multint@\nolimits\fi}\fi\ints@@@}%
\def\intkern@{\mathchoice{\!\!\!}{\!\!}{\!\!}{\!\!}}%
\def\plaincdots@{\mathinner{\cdotp\cdotp\cdotp}}%
\def\intdots@{\mathchoice{\plaincdots@}%
 {{\cdotp}\mkern1.5mu{\cdotp}\mkern1.5mu{\cdotp}}%
 {{\cdotp}\mkern1mu{\cdotp}\mkern1mu{\cdotp}}%
 {{\cdotp}\mkern1mu{\cdotp}\mkern1mu{\cdotp}}}%
%
%
%
\def\RIfM@{\relax\protect\ifmmode}
\def\text{\RIfM@\expandafter\text@\else\expandafter\mbox\fi}
\let\nfss@text\text
\def\text@#1{\mathchoice
   {\textdef@\displaystyle\f@size{#1}}%
   {\textdef@\textstyle\tf@size{\firstchoice@false #1}}%
   {\textdef@\textstyle\sf@size{\firstchoice@false #1}}%
   {\textdef@\textstyle \ssf@size{\firstchoice@false #1}}%
   \glb@settings}

\def\textdef@#1#2#3{\hbox{{%
                    \everymath{#1}%
                    \let\f@size#2\selectfont
                    #3}}}
\newif\iffirstchoice@
\firstchoice@true
%
%
\def\Let@{\relax\iffalse{\fi\let\\=\cr\iffalse}\fi}%
\def\vspace@{\def\vspace##1{\crcr\noalign{\vskip##1\relax}}}%
\def\multilimits@{\bgroup\vspace@\Let@
 \baselineskip\fontdimen10 \scriptfont\tw@
 \advance\baselineskip\fontdimen12 \scriptfont\tw@
 \lineskip\thr@@\fontdimen8 \scriptfont\thr@@
 \lineskiplimit\lineskip
 \vbox\bgroup\ialign\bgroup\hfil$\m@th\scriptstyle{##}$\hfil\crcr}%
\def\Sb{_\multilimits@}%
\def\endSb{\crcr\egroup\egroup\egroup}%
\def\Sp{^\multilimits@}%

%
%
%
\newdimen\ex@
\ex@.2326ex
\def\rightarrowfill@#1{$#1\m@th\mathord-\mkern-6mu\cleaders
 \hbox{$#1\mkern-2mu\mathord-\mkern-2mu$}\hfill
 \mkern-6mu\mathord\rightarrow$}%
\def\leftarrowfill@#1{$#1\m@th\mathord\leftarrow\mkern-6mu\cleaders
 \hbox{$#1\mkern-2mu\mathord-\mkern-2mu$}\hfill\mkern-6mu\mathord-$}%
\def\leftrightarrowfill@#1{$#1\m@th\mathord\leftarrow
\mkern-6mu\cleaders
 \hbox{$#1\mkern-2mu\mathord-\mkern-2mu$}\hfill
 \mkern-6mu\mathord\rightarrow$}%
\def\overrightarrow{\mathpalette\overrightarrow@}%
\def\overrightarrow@#1#2{\vbox{\ialign{##\crcr\rightarrowfill@#1\crcr
 \noalign{\kern-\ex@\nointerlineskip}$\m@th\hfil#1#2\hfil$\crcr}}}%

\def\overleftarrow{\mathpalette\overleftarrow@}%
\def\overleftarrow@#1#2{\vbox{\ialign{##\crcr\leftarrowfill@#1\crcr
 \noalign{\kern-\ex@\nointerlineskip}$\m@th\hfil#1#2\hfil$\crcr}}}%
\def\overleftrightarrow{\mathpalette\overleftrightarrow@}%
\def\overleftrightarrow@#1#2{\vbox{\ialign{##\crcr
   \leftrightarrowfill@#1\crcr
 \noalign{\kern-\ex@\nointerlineskip}$\m@th\hfil#1#2\hfil$\crcr}}}%
\def\underrightarrow{\mathpalette\underrightarrow@}%
\def\underrightarrow@#1#2{\vtop{\ialign{##\crcr$\m@th\hfil#1#2\hfil
  $\crcr\noalign{\nointerlineskip}\rightarrowfill@#1\crcr}}}%

\def\underleftarrow{\mathpalette\underleftarrow@}%
\def\underleftarrow@#1#2{\vtop{\ialign{##\crcr$\m@th\hfil#1#2\hfil
  $\crcr\noalign{\nointerlineskip}\leftarrowfill@#1\crcr}}}%
\def\underleftrightarrow{\mathpalette\underleftrightarrow@}%
\def\underleftrightarrow@#1#2{\vtop{\ialign{##\crcr$\m@th
  \hfil#1#2\hfil$\crcr
 \noalign{\nointerlineskip}\leftrightarrowfill@#1\crcr}}}%

\def\qopnamewl@#1{\mathop{\operator@font#1}\nlimits@}
\let\nlimits@\displaylimits
\def\setboxz@h{\setbox\z@\hbox}

\def\varlim@#1#2{\mathop{\vtop{\ialign{##\crcr
 \hfil$#1\m@th\operator@font lim$\hfil\crcr
 \noalign{\nointerlineskip}#2#1\crcr
 \noalign{\nointerlineskip\kern-\ex@}\crcr}}}}

 \def\rightarrowfill@#1{\m@th\setboxz@h{$#1-$}\ht\z@\z@
  $#1\copy\z@\mkern-6mu\cleaders
  \hbox{$#1\mkern-2mu\box\z@\mkern-2mu$}\hfill
  \mkern-6mu\mathord\rightarrow$}
\def\leftarrowfill@#1{\m@th\setboxz@h{$#1-$}\ht\z@\z@
  $#1\mathord\leftarrow\mkern-6mu\cleaders
  \hbox{$#1\mkern-2mu\copy\z@\mkern-2mu$}\hfill
  \mkern-6mu\box\z@$}

\def\projlim{\qopnamewl@{proj\,lim}}
\def\injlim{\qopnamewl@{inj\,lim}}
\def\varinjlim{\mathpalette\varlim@\rightarrowfill@}
\def\varprojlim{\mathpalette\varlim@\leftarrowfill@}
\def\varliminf{\mathpalette\varliminf@{}}
\def\varliminf@#1{\mathop{\underline{\vrule\@depth.2\ex@\@width\z@
   \hbox{$#1\m@th\operator@font lim$}}}}
\def\varlimsup{\mathpalette\varlimsup@{}}
\def\varlimsup@#1{\mathop{\overline
  {\hbox{$#1\m@th\operator@font lim$}}}}

%
%
%
%
%
%
\begingroup \catcode `|=0 \catcode `[= 1
\catcode`]=2 \catcode `\{=12 \catcode `\}=12
\catcode`\\=12 
|gdef|@alignverbatim#1\end{align}[#1|end[align]]
|gdef|@salignverbatim#1\end{align*}[#1|end[align*]]

|gdef|@alignatverbatim#1\end{alignat}[#1|end[alignat]]
|gdef|@salignatverbatim#1\end{alignat*}[#1|end[alignat*]]

|gdef|@xalignatverbatim#1\end{xalignat}[#1|end[xalignat]]
|gdef|@sxalignatverbatim#1\end{xalignat*}[#1|end[xalignat*]]

|gdef|@gatherverbatim#1\end{gather}[#1|end[gather]]
|gdef|@sgatherverbatim#1\end{gather*}[#1|end[gather*]]

|gdef|@gatherverbatim#1\end{gather}[#1|end[gather]]
|gdef|@sgatherverbatim#1\end{gather*}[#1|end[gather*]]

|gdef|@multilineverbatim#1\end{multiline}[#1|end[multiline]]
|gdef|@smultilineverbatim#1\end{multiline*}[#1|end[multiline*]]

|gdef|@arraxverbatim#1\end{arrax}[#1|end[arrax]]
|gdef|@sarraxverbatim#1\end{arrax*}[#1|end[arrax*]]

|gdef|@tabulaxverbatim#1\end{tabulax}[#1|end[tabulax]]
|gdef|@stabulaxverbatim#1\end{tabulax*}[#1|end[tabulax*]]

|endgroup

\def\align{\@verbatim \frenchspacing\@vobeyspaces \@alignverbatim
You are using the "align" environment in a style in which it is not defined.}

\@namedef{align*}{\@verbatim\@salignverbatim
You are using the "align*" environment in a style in which it is not defined.}
\expandafter\let\csname endalign*\endcsname =\endtrivlist

\def\alignat{\@verbatim \frenchspacing\@vobeyspaces \@alignatverbatim
You are using the "alignat" environment in a style in which it is not defined.}

\@namedef{alignat*}{\@verbatim\@salignatverbatim
You are using the "alignat*" environment in a style in which it is not defined.}
\expandafter\let\csname endalignat*\endcsname =\endtrivlist

\def\xalignat{\@verbatim \frenchspacing\@vobeyspaces \@xalignatverbatim
You are using the "xalignat" environment in a style in which it is not defined.}

\@namedef{xalignat*}{\@verbatim\@sxalignatverbatim
You are using the "xalignat*" environment in a style in which it is not defined.}
\expandafter\let\csname endxalignat*\endcsname =\endtrivlist

\def\gather{\@verbatim \frenchspacing\@vobeyspaces \@gatherverbatim
You are using the "gather" environment in a style in which it is not defined.}

\@namedef{gather*}{\@verbatim\@sgatherverbatim
You are using the "gather*" environment in a style in which it is not defined.}
\expandafter\let\csname endgather*\endcsname =\endtrivlist

\def\multiline{\@verbatim \frenchspacing\@vobeyspaces \@multilineverbatim
You are using the "multiline" environment in a style in which it is not defined.}

\@namedef{multiline*}{\@verbatim\@smultilineverbatim
You are using the "multiline*" environment in a style in which it is not defined.}
\expandafter\let\csname endmultiline*\endcsname =\endtrivlist

\def\arrax{\@verbatim \frenchspacing\@vobeyspaces \@arraxverbatim
You are using a type of "array" construct that is only allowed in AmS-LaTeX.}

\def\tabulax{\@verbatim \frenchspacing\@vobeyspaces \@tabulaxverbatim
You are using a type of "tabular" construct that is only allowed in AmS-LaTeX.}

\@namedef{arrax*}{\@verbatim\@sarraxverbatim
You are using a type of "array*" construct that is only allowed in AmS-LaTeX.}
\expandafter\let\csname endarrax*\endcsname =\endtrivlist

\@namedef{tabulax*}{\@verbatim\@stabulaxverbatim
You are using a type of "tabular*" construct that is only allowed in AmS-LaTeX.}
\expandafter\let\csname endtabulax*\endcsname =\endtrivlist


 \def\endequation{%
     \ifmmode\ifinner 
      \iftag@
        \addtocounter{equation}{-1} 
        $\hfil
           \displaywidth\linewidth\@taggnum\egroup \endtrivlist
        \global\tag@false
        \global\@ignoretrue   
      \else
        $\hfil
           \displaywidth\linewidth\@eqnnum\egroup \endtrivlist
        \global\tag@false
        \global\@ignoretrue 
      \fi
     \else   
      \iftag@
        \addtocounter{equation}{-1} 
        \eqno \hbox{\@taggnum}
        \global\tag@false%
        $$\global\@ignoretrue
      \else
        \eqno \hbox{\@eqnnum}
        $$\global\@ignoretrue
      \fi
     \fi\fi
 } 

 \newif\iftag@ \tag@false
 
 \def\TCItag{\@ifnextchar*{\@TCItagstar}{\@TCItag}}
 \def\@TCItag#1{%
     \global\tag@true
     \global\def\@taggnum{(#1)}}
 \def\@TCItagstar*#1{%
     \global\tag@true
     \global\def\@taggnum{#1}}

  \@ifundefined{tag}{
     \def\tag{\@ifnextchar*{\@tagstar}{\@tag}}
     \def\@tag#1{%
         \global\tag@true
         \global\def\@taggnum{(#1)}}
     \def\@tagstar*#1{%
         \global\tag@true
         \global\def\@taggnum{#1}}
  }{}

\makeatother

\allowdisplaybreaks
\begin{document}

\title{Semi-analytical solution of a McKean-Vlasov equation with feedback through hitting a boundary}
\author{
Alexander Lipton\footnote{Massachusetts Institute of Technology, Connection Science, Cambridge, MA, USA and Ecole Polytechnique Federale de Lausanne, Switzerland, E-mail: alexlipt@mit.edu}, 
Vadim Kaushansky\footnote{Mathematical Institute \& Oxford-Man Institute, University of Oxford, UK, E-mail: vadim.kaushansky@maths.ox.ac.uk}\thanks{The second author gratefully acknowledges support from the Economic and Social Research Council and Bank of America Merrill Lynch},
Christoph Reisinger\footnote{Mathematical Institute  \& Oxford-Man Institute, University of Oxford, Andrew Wiles Building, Woodstock Road, Oxford, OX2 6GG, UK, E-mail: christoph.reisinger@maths.ox.ac.uk}}    
\date{}

\maketitle

\begin{abstract}
In this paper, we study the non-linear diffusion equation associated with a particle system where 
the common drift depends on the rate of absorption of particles at a boundary.
We provide an interpretation as a structural credit risk model with default contagion in a large interconnected banking system.
Using the method of heat potentials, we derive a coupled system of Volterra integral equations for the transition density and for the loss through absorption.
An approximation by expansion is given for a small interaction parameter. We also present a numerical solution algorithm and conduct computational tests.
\end{abstract}

\section{Introduction}

In this paper, we derive semi-analytical solutions for the density of interacting particles where the interaction results from shocks to the system when particles hit a boundary.
Equations of this type have arisen recently as models for the ``integrate and fire'' behaviour of neuronal networks and for systemic default  risk in networks of interconnected banks.

Structural default models, where a bank's default is triggered by its assets falling below its liabilities, have been studied for decades since the seminal work of \cite{Merton}. There are several limitations to the basic version of these models:
most do not take into account that banks are interconnected, as a result, ignoring the possibility of contagious defaults (but see, e.g., \cite{haworth2006modeling, HaworthReisinger}).
To address this, \cite{Lipton2015} combined the structural  and \cite{Eisenberg} framework to consider not only external liabilities, but also mutual liabilities.

A further problem is the curse of dimensionality. Numerical and analytical PDE techniques are typically applied up to dimension three (\cite{LiptonItkin2015}, \cite{LiptonItkin2015b}, \cite{kaushansky2018numerical}, \cite{kaushansky2018transition}); for any larger dimension, only Monte Carlo methods are usually considered viable, which are slow to converge and noisy by nature. 

When the banking system is large and homogenous, and only macroscopic quantities are of interest,
one can consider a large pool approximation of the banks' asset value processes (technically, by taking the limit of their empirical measure for an infinite number of banks).
This approach was first studied by \cite{bush2011}. Following on, \cite{Shkolnikov} and \cite{Andreas} took into account interaction effects by considering a particle system with positive feedback from the firms' defaults. This leads to McKean--Vlasov type equations, which model a typical representative of the banking system whose dynamics depends on the losses in the wider system. An alternative viewpoint is provided by the \cite{Lipton2015} model when taking the number of banks there to infinity. This route leads to the same equation, as shown by our derivation in the next section.

\cite{Andreas} assumed zero correlation between banks with linear dependence of the interaction on the loss function and obtained  existence and uniqueness of the solution, with the necessity of blow-ups for large enough interaction parameter. \cite{hambly2018spde} and \cite{ledger2018mercy} introduced positive correlation between banks, while \cite{Shkolnikov} and \cite{nadtochiy2018mean} considered a nonlinear dependence through the loss function.
Very recently, \cite{ichiba2018dynamic} derived a McKean-Vlasov SDE in nonlinear jump-diffusion form 
for  the  average  bank  reserves in an interacting banking  system with  local  and  mean-field  default  intensities.

Earlier, a model similar to the one studied here was found in neuroscience, where a large network of electrically coupled neurons can be described by McKean--Vlasov type equations (\cite{caceres2011analysis, carrillo2013classical,  delarue2015particle, delarue2015global}). If a neuron's potential reaches some fixed threshold, it jumps to a higher potential level and sends a signal to other neurons.

Originally, McKean--Vlasov type equations were suggested by \cite{kac1956foundations} as a stochastic toy model for the Vlasov kinetic equation of plasma, with  a detailed study by \cite{mckean1966class}. In recent years, mean-field problems, and McKean--Vlasov type equations in particular, have become a very popular topic in applied mathematics from both a theoretical and practical perspective. Different versions of such problems, apart from the specific form in the papers above, have been applied to mathematical finance, e.g., in portfolio optimization (\cite{borkar2010mckean} consider optimal allocation into sectors for a large number of stocks) and in game theory (e.g., \cite{huang2006large} discuss an agent's optimal  behavior with respect to a mass effect).

From a numerical prospective, there are established simulation methods for typical McKean--Vlasov equations (see \cite{bossy1997stochastic} and \cite{antonelli2002rate})
and, more recently, several authors have analyzed multilevel and multi-index schemes (see \cite{ricketson2015multilevel}, \cite{szpruch2017iterative}, \cite{haji2018multilevel})
and importance sampling (\cite{reis2018importance}).

However, none of these methods cover the models described above due to the singular, path-dependent nature of the feedback.
Here, we consider the \cite{Andreas} version for simplicity.
For this model, \cite{kaushansky2018simulation} proposed an Euler-type particle method and proved convergence with order $1/2$ in the timestep,
which can be improved to $1$ using Brownian bridges.
In this paper, we show how to solve these equations by reformulation first as a non-linear free boundary problem similar to the classical Stefan problem
and then as a system of two coupled Volterra equations.

First, for \emph{given}  drift term from the mean-field interaction,
the problem is formulated as diffusion problem on a semi-infinite domain with curvilinear boundary, and its solution represented semi-analytically by the method of heat potentials.
A detailed introduction to heat potentials can be found in \cite{watson2012introduction} and 
\cite{tikhonov2013equations} (pp.~530--535).
The first use of the method of heat potentials in mathematical finance is found in \cite{Lipton2001} for pricing path-dependent options with curvilinear barrier
(Section 12.2.3, pp.~462--467).

Second, expressing the interaction term by the solution from the first step results in two coupled Volterra equations.
For early applications of heat potentials to versions of the Stefan problem see already, e.g., \cite[Part II, Chapter 1]{rubinstein1971stefan}.
These singular Volterra equations are then solved by either an expansion method for small interaction parameter, or numerically by discretisation and Newton-Raphson iteration. 

An expansion approach for a certain McKean--Vlasov equation with mean-field interaction through the drift was recently studied in \cite{gobet2018analytical},
who perform an iterative two-step procedure which decouples the nonlinearity arising from the dependence of the drift on the law of the process from the standard dependence on the state-variables. 
The present paper differs not only in the solution approach taken, but fundamentally in the considered mean-field interaction (through hitting times rather than the expectation of the drift) and the parameter of the expansion (for small drift interaction rather than small volatility).

To assess the accuracy of the (first order) perturbation solution and to illustrate the behaviour for strong interactions where the expansion breaks down,
we describe a simple numerical algorithm, but refer the reader to the large and well-established body of literature on more advanced numerical methods for Volterra equations (see Section \ref{numerical_solution}).

The rest of the paper is organized as follows: in Section \ref{sec:limit},
we provide an alternative derivation of the model described in \cite{Andreas} by taking the limiting case for infinitely many firms in the model of \cite{Lipton2015}; 
in Section \ref{sec:HP}, we derive a solution for the first passage density of Brownian motion over a curve in terms of a Volterra equation, using the method of heat potentials,
and
thus obtain the interaction term in the original McKean--Vlasov equation;
in Section \ref{sec:solMcKV}, we we consider a perturbation method and a numerical method for the corresponding system of Volterra equations;
in Section \ref{sec:results}, we show numerical illustrations and compare the methods; in Section \ref{sec:conclusion} we conclude.

\section{Mean-field limit for large banking system}
\label{sec:limit}

Following \cite{Lipton2015}, we consider a system of $N$ banks with external as well as mutual assets and liabilities.
We denote by
$L_i$ the external liabilities for bank $i$ and by
$L_{ij}$ the liability from bank $i$ to bank $j$.

Assume that the dynamics of bank $i$'s total external assets is governed by
\begin{equation*}
	\frac{d  A^i_t}{A^i_t} =  \mu_i \, dt  + \sigma_i  \, d W_t^i, 
\end{equation*}
where $W^i$ are independent standard Brownian motions for $1 \le i \le n$,
and the liabilities, both external $L_i$ and mutual $L_{ij}$, are constant.

Bank $i$ is assumed to default when its assets fall below a certain threshold determined by its liabilities, namely
at time $\tau_i = \inf\{t: A^i_t \le \Lambda_t^i\}$, where $\Lambda^i$ is a default boundary which we now work out.
At time $t = 0$, following \cite{Lipton2015},
\begin{equation*}
	\Lambda^i_{0} = R_i \left(L_i + \sum_{j \ne i} L_{ij} \right) - \sum_{j \ne i} L_{ji},
\end{equation*}
where $R_i$ is the recovery rate of bank $i$, i.e., bank $i$ defaults if the recovery value of its liabilities, external and to other banks, exceeds the sum of its assets,
external and from other banks.

Since liabilities and recovery rates are assumed constant in time, the default boundary remains constant until some bank defaults.
If bank $k$ defaults at time $t$, the default boundary of bank $i$ jumps by (see also \cite{Lipton2015})
\begin{equation*}
	\Delta \Lambda_t^i = (1 - R_i R_k)  L_{ki}.
\end{equation*}

In the following, we assume that the banks have the same parameters, i.e., $\mu_i=\mu$, $\sigma_i=\sigma$ and $R_i = R$, for some $\mu$, $\sigma$, $R$, and $L_i = L$ and $L_{ij} = \frac{\gamma}{N}$, both constant, respectively, for some $L, \gamma>0$.
In particular, this implies that the asset value processes are exchangeable,
and we have $\Lambda_0^i=\Lambda_0$ for some $\Lambda_0$, which will allow us to take a large pool limit.

As a result, we can write $\Lambda^i_t$ as
\begin{equation*}
	\Lambda_t^i = \Lambda_0^i + \frac{\gamma}{N} \sum_{k \ne i} (1 - R^2)  \mathbbm{1}_{\{\tau_k \le t\}}.
\end{equation*}
It is more convenient to introduce the distance to default 
$Y_t^i = \log(A_t^i/ \Lambda_t^i)/\sigma$,
then
\begin{equation*}
	Y_t^i = Y_0^i + (\mu - \sigma^2/2) t + W_t^i - \frac{1}{\sigma} \log \left(1 + \frac{\gamma}{N} \sum_{k \ne i} (1 - R^2) \frac{1}{\Lambda_0}  \mathbbm{1}_{\{\tau_k \le t\}} \right).
\end{equation*}
Using the approximation $\log(1 + x) \approx x$ for small $x$ (i.e., assuming only a small proportion of the banks have yet defaulted), we get
for $t<\tau_i$
\begin{eqnarray*}
	Y_t^i &=& Y_t^0 + (\mu - \sigma^2/2) t + W_t^i - \frac{\gamma(1-R^2)}{\sigma \Lambda_0} L_t^N, \\
L_t^N &=&  \frac{1}{N} \sum_{k}   \mathbbm{1}_{\{\tau_k \le t\}}.
\end{eqnarray*}


For simplicity, we take the special case $\mu - \sigma^2/2 =0$ (in any case, this term will be small for realistic
parameter values and not have any qualitative impact on the results).

Then, using propagation of chaos as in \cite{Andreas}, one can obtain that in the limit for $N \to \infty$
all $Y_t^i$ have the same distribution as $Y$ given by\footnote{Note the slight ambiguity between the liabilities above and the loss function below, which are both denoted by $L$. It should be clear from the context and indices
applied which one is being referred to, hence for consistency with the literature we keep the notation.}

\begin{equation}
\begin{aligned}
	Y_t &= Y_0 +  W_t - \alpha L_t, \\
	L_t &= \mathbb{P}(\tau \le t),\\
	\tau &= \inf \{t \in [0, T]: Y_t \le 0 \},
\end{aligned}
\label{mckean_vlasov_eq}
\end{equation}
where $\alpha =  \frac{\gamma(1-R^2)}{\sigma \Lambda_0}$.

\cite{Andreas} and \cite{kaushansky2018simulation} considered $Y_0$ as a random variable from a given distribution with a density $f_0(x)$.
For simplicity, we assume $Y_0 = z$ a.s.\ for some $z > 0$, 
by taking $A_0^i$ and $\Lambda_0^i$ the same for all $i$, but
the results can be extended by making $A_0^i$ random and drawn from the same distribution.


The distribution of the stopped process $Y_{t \wedge \tau}$ can be written as $L_t \delta + p$, where $\delta$ is the atomic measure concentrated at 0 and
$p$ is the continuous component.
Writing the increasing process $L$ as $\alpha L_t = - \int_0^t \mu(t') \, dt'$ for some negative $\mu$, 
$p$ satisfies
\begin{equation}
\begin{aligned}
p_{t}\left( t,x;z\right) &=- \mu(t) \, p_{x}\left( t,x;z\right) +%
\frac{1}{2}p_{xx}\left( t,x;z\right) , \\ 
p\left( 0,x;z\right) &=\delta \left( x-z\right) , \\ 
p\left( t,0;z\right) &=0.%
\end{aligned}
\label{mckean_vlasov_pde}
\end{equation}%
Using the relation $L_t =  1-\int_0^\infty p(t,x;z) \, dx$, we can express $\mu$ in terms of $p$ by
\begin{eqnarray}
\label{drift}
\label{Eq1}
g(t;z) :=  \frac{{\rm d} L_t}{{\rm d} t} = -  \int_0^\infty p_t \, dx =  \mu(t) \int_0^\infty p_{x} \, dx - \frac{1}{2} \int_0^\infty p_{xx} \, dx = \frac{1}{2} p_x(t,0;z),
\end{eqnarray}
where we have used the PDE (\ref{mckean_vlasov_pde}) as well as $p(t,0;z)=0$ and  $\lim_{x\rightarrow \infty} p(t,x;z)=\lim_{x\rightarrow \infty} p_x(t,x;z)=0$. Hence, \eqref{mckean_vlasov_pde} can be written in the self-consistent form
\begin{equation}
\begin{aligned}
p_{t}\left( t,x;z\right) &= \frac{\alpha}{2} p_x(t, 0; z) \, p_{x}\left( t,x;z\right) +%
\frac{1}{2}p_{xx}\left( t,x;z\right) , \\ 
p\left( 0,x;z\right) &=\delta \left( x-z\right) , \\ 
p\left( t,0;z\right) &=0.%
\end{aligned}
\label{self-consistent}
\end{equation}%

{
From the second equation in (\ref{mckean_vlasov_eq}), $g$ is also the density of the first passage time of $Y$.
Noting a result from \cite{peskir2002integral} for the first-passage problem of Brownian motion, applied to $Y_t-L_t$,
\cite{Andreas} give the following Volterra equation for $L$:
\begin{equation*}
\Phi\left(\frac{z-\alpha L_t}{\sqrt{t}}\right) =
\int_0^t \Phi\left(\alpha \frac{L_t-L_s}{\sqrt{t-s}}\right) \, d L_s.
\end{equation*}
In contrast to this equation, we will derive a coupled system of Volterra equations which give both $p$ and $g$ (i.e., not the cumulative distribution).
These are in a more standard form without the nonlinearity $\Phi$ on the left-hand side and integration over $L$ on the right-hand side, and hence numerically more amenable.
}

In Figure \ref{L_t_fig}(a), we plot the loss function $t\rightarrow L_t$, computed by the methods described in the remainder of this paper, for different values of $\alpha$,
where $\alpha$ measures the interconnectivity of the banking system.
\begin{figure}
	\begin{center}
	\hspace{-0.6cm}
		\subfloat[]{\includegraphics[width=0.5\textwidth]{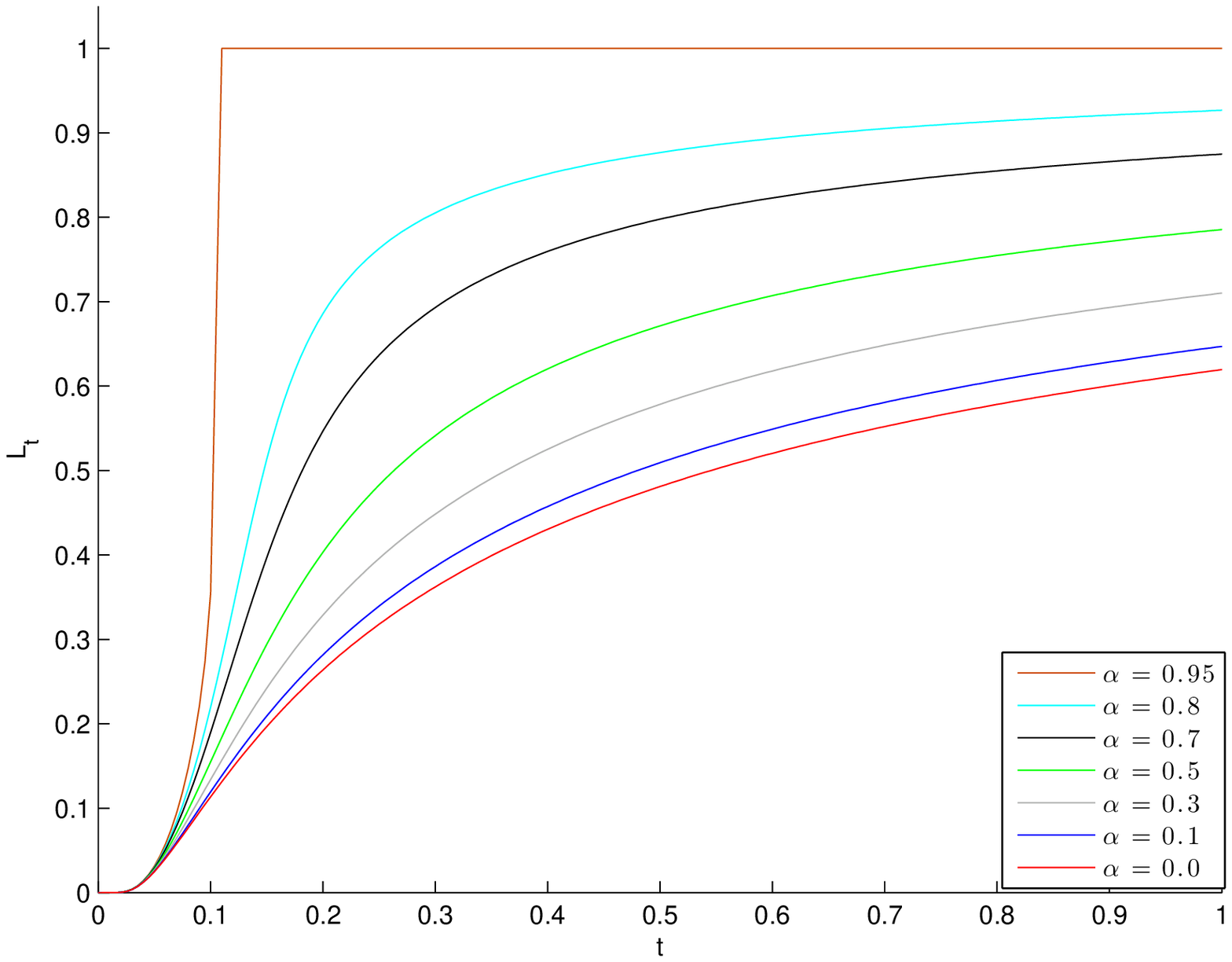}}\hspace{-0.4cm}
		\subfloat[]{\includegraphics[width=0.54\textwidth]{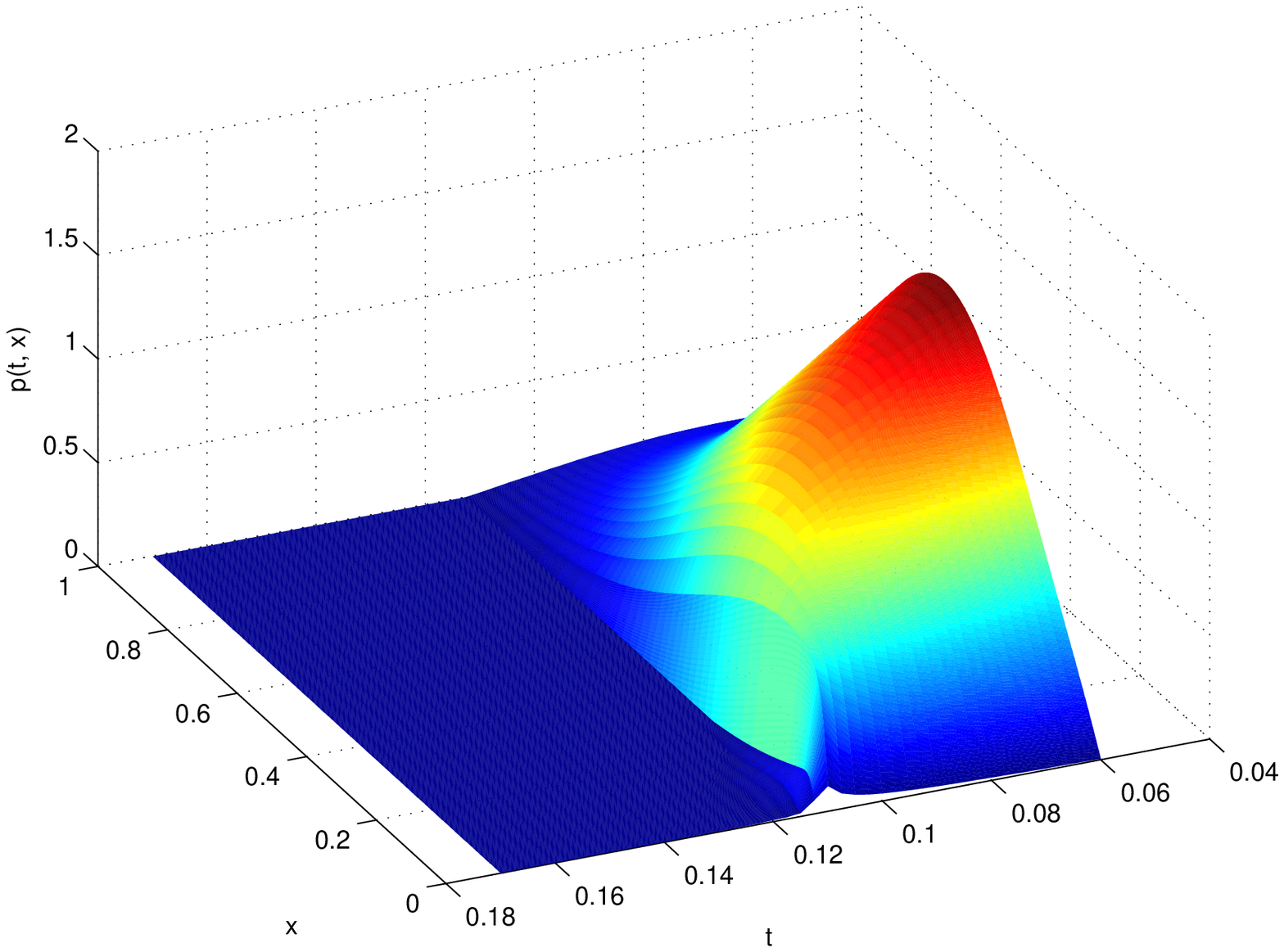}} \\
	\end{center}
	\vspace{-10pt}
	\caption{(a) $L_t$ for different values of $\alpha$ and $z=0.5$. (b) The density $p(t,x)$ for $\alpha=0.95$.}
 	\label{L_t_fig}
\end{figure}
The losses increase dramatically because of interbank liabilities, which may even lead to systemic events, here for $\alpha$ larger than around 0.9.
Hereby the rate of losses increases to infinity, as seen in Figure \ref{L_t_fig}(b) from the large gradient $p_x(t,0)$ for $t$ immediately before the blow-up,
and then triggers a jump in $L_t$.
\cite{Shkolnikov} first give a rigorous mathematical characterisation of this type of behaviour in their model, and \cite{Andreas} go further to
show the necessity of such ``blow-ups'' for large enough $\alpha$, depending on the initial distribution.

The derivation above from first principles allows us to estimate economically meaningful values of $\alpha$; see also \cite{bujok2012numerical} for the estimation of the initial values
$Y^i_0$ from credit spreads.
 According to \cite{DavidLehar}, on average, the fraction of interbank liabilities in comparison to total liabilities is $12\%$ for the EU, $8\%$ for Canada, and, as per Economic Research website of the Federal Reserve Bank of St. Louis, $4.5\%$ for the US. 
Consider, for example, the EU area. In our notation, $\sum_{j \ne i} L_{ij} \approx \gamma \approx \frac{0.12}{1 - 0.12} L \approx 0.14 L$, where $L_i=L$ are the external liabilities
for each bank, assumed identical. We can write $\alpha$ as
\begin{equation*}
	\alpha \approx \frac{1}{\sigma} \frac{(1-R^2) \gamma}{R L - (1- R) \gamma} \approx  \frac{1}{\sigma}\frac{(1-R^2) 0.14}{R  - (1- R) 0.14}.
\end{equation*}
The typical volatility of assets varies from $1\%$ to $8\%$, which can be confirmed, for example by calibration of the one-dimensional \cite{LiptonSepp2009} model. Even for a conservative case, when the recovery rate is close to $1$, we get a significant value of $\alpha$. To be precise, for $R = 0.9$ and $\sigma =0.08$, we get $\alpha \approx 0.3$.
On the other hand, 
for typical recovery rates of $R\approx 0.4$ and volatility at the lower end one can easily get $\alpha>5$.

\section{
The method of heat potentials}
\label{sec:HP}

In this section we compute the transition density and the
first passage density of Brownian motion with a known time-dependent drift $\mu(t)$ on the positive semi-axis. 
The transition probability density $p(t, x, z)$ satisfies the Kolmogorov forward equation (\ref{mckean_vlasov_pde}).

We first derive an expression for $p(t, x;z)$ using the method of heat potentials with an unknown weight function which can be found as a solution of a Volterra equation of the second kind.
Next, we differentiate the expression for $p(t, x;z)$ with respect to $x$ and take its limit to $0$ in order to find the first passage density
of $Y$ in (\ref{mckean_vlasov_eq}), or, equivalently, $g$ in (\ref{Eq1}). 
This limit is less well-known, so we calculate it for completeness.

Below we omit $z$ when possible.

\subsection{Semi-closed formula for the transition density}

Let $M\left( t\right) =\int_{0}^{t}\mu \left( t^{\prime }\right) dt^{\prime
} $. The change of variables
$\left( t,x\right) \rightarrow \left( t,y\right) =\left( t,x-M\left( t\right)\right)$  
yields
the following Cauchy-Dirichlet problem:%
\begin{equation}
\begin{aligned}
&p_{t}\left( t,y\right) =\frac{1}{2}p_{yy}\left( t,y\right) , \\ 
&p\left( 0,y\right) =\delta \left( y-z\right) , \\ 
&p\left( t,-M\left( t\right) \right) =0.%
\end{aligned}
\label{Eq2}
\end{equation}%
We split $p$ in two parts%
\begin{equation*}
p\left( t,y\right) =q\left( t,y\right) +H\left( t,y\right) ,  \label{Eq6}
\end{equation*}%
where $H\left( t,y\right) $ is the standard heat kernel,%
\begin{equation*}
H\left( t,y\right) =\frac{\exp \left( -\frac{\left( y-z\right) ^{2}}{2t}%
\right) }{\sqrt{2\pi t}}.%
\end{equation*}%
The corresponding problem for $q$ has the form%
\begin{equation*}
\begin{aligned}
 q_{t}\left( t,y\right) &=\frac{1}{2}q_{yy}\left( t,y\right) , \\ 
 q\left( 0,y\right) &=0, \\ 
 q\left( t,-M\left( t\right) \right) &=-\frac{\exp \left( -\frac{\left(
M\left( t\right) +z\right) ^{2}}{2t}\right) }{\sqrt{2\pi t}}.%
\end{aligned}
\end{equation*}

We use the method of heat potentials (see \cite{Lipton2001}, pp. 462--468). Thereby, we
represent $q$ in the form%
\begin{equation*}
\begin{aligned}
&q\left( t,y\right) =\int_{0}^{t}\frac{\left( y+M\left( t^{\prime }\right)
\right) \exp \left( -\frac{\left( y+M\left( t^{\prime }\right) \right) ^{2}}{%
2\left( t-t^{\prime }\right) }\right) }{\sqrt{2\pi \left( t-t^{\prime
}\right) ^{3}}}\nu \left( t^{\prime }\right) dt^{\prime },%
\end{aligned}
\end{equation*}%
where $\nu$ is a suitable weight which will be determined to match the boundary condition.
Assuming that $\nu \left(t\right) $ is known, we can revert to the original variables and get%
\begin{equation}
p\left( t,x\right) =\int_{0}^{t}\frac{\left( x-\Psi \left( t,t^{\prime
}\right) \right) \exp \left( -\frac{\left( x-\Psi \left( t,t^{\prime
}\right) \right) ^{2}}{2\left( t-t^{\prime }\right) }\right) }{\sqrt{2\pi
\left( t-t^{\prime }\right) ^{3}}}\nu \left( t^{\prime }\right) dt^{\prime }+%
\frac{\exp \left( -\frac{\left( x-M\left( t\right) -z\right) ^{2}}{2t},%
\right) }{\sqrt{2\pi t}},%
\label{Eq11}
\end{equation}%
where $\Psi(t, t') = M(t) - M(t')$.

By construction, $p$ in (\ref{Eq11}) satisfies the first two equations in (\ref{Eq2}).
We also need to 
satisfy the boundary condition in \eqref{Eq2}. 
It is easy to show (see Appendix \ref{app:limits}) that%
\begin{equation*}
\begin{aligned}
\mathbb{L}_{1}&:=\underset{x\rightarrow 0}{\lim }\int_{0}^{t}\frac{\left(
x-\Psi \left( t,t^{\prime }\right) \right) \exp \left( -\frac{\left( x-\Psi
\left( t,t^{\prime }\right) \right) ^{2}}{2\left( t-t^{\prime }\right) }%
\right) }{\sqrt{2\pi \left( t-t^{\prime }\right) ^{3}}}\nu \left( t^{\prime
}\right) dt^{\prime } \\ 
&=\nu \left( t\right) -\int_{0}^{t}\frac{\Psi \left( t,t^{\prime }\right) \Xi
\left( t,t^{\prime }\right) }{\sqrt{2\pi \left( t-t^{\prime }\right) ^{3}}}%
\nu \left( t^{\prime }\right) dt^{\prime }.%
\end{aligned}
\label{Eq11a}
\end{equation*}%
The requirement $\lim_{x \to 0} p(t, x) = 0$ thus leads to 
the following Volterra integral equation of the second kind for $\nu$, 
\begin{equation}
\nu \left( t\right) -\int_{0}^{t}\frac{\Psi \left( t,t^{\prime }\right) \Xi
\left( t,t^{\prime }\right) }{\sqrt{2\pi \left( t-t^{\prime }\right) ^{3}}}%
\nu \left( t^{\prime }\right) dt^{\prime }+\frac{\exp \left( -\frac{\left(
M\left( t\right) +z\right) ^{2}}{2t}\right) }{\sqrt{2\pi t}}=0,%
\label{Eq10}
\end{equation}%
where 
\begin{equation*}
\begin{aligned}
\Xi \left( t,t^{\prime }\right) &=\exp \left( -\frac{\Psi \left( t,t^{\prime
}\right) ^{2}}{2\left( t-t^{\prime }\right) }\right) , \quad t\neq t', \\ 
\Xi \left( t,t\right) &=1.%
\end{aligned}
\end{equation*}%

\subsection{Computation of loss rate over boundary}

In this section, we compute $g(t)$ from (\ref{Eq1}), suppressing $z$,
by first differenting \eqref{Eq11},
\begin{equation*}
\begin{aligned}
p_{x}\left( t,x\right) &=\int_{0}^{t}\left( 1-\frac{\left( x-\Psi \left(
t,t^{\prime }\right) \right) ^{2}}{\left( t-t^{\prime }\right) }\right) 
\frac{\exp \left( -\frac{\left( x-\Psi \left( t,t^{\prime }\right) \right)
^{2}}{2\left( t-t^{\prime }\right) }\right) }{\sqrt{2\pi \left( t-t^{\prime
}\right) ^{3}}}\nu \left( t^{\prime }\right) dt^{\prime } \\ 
&-\left( x-M\left( t\right) -z\right) \frac{\exp \left( -\frac{\left(
x-M\left( t\right) -z\right) ^{2}}{2t}\right) }{\sqrt{2\pi t^{3}}}.%
\end{aligned}
\label{Eq11c}
\end{equation*}%
In Appendix \ref{app:limits} we show that%
\begin{equation*}
\begin{aligned}
\mathbb{L}_{2}& :=
\underset{x\rightarrow 0}{\lim }\int_{0}^{t}\left( 1-\frac{\left( x-\Psi
\left( t,t^{\prime }\right) \right) ^{2}}{\left( t-t^{\prime }\right) }%
\right) \frac{\exp \left( -\frac{\left( x-\Psi \left( t,t^{\prime }\right)
\right) ^{2}}{2\left( t-t^{\prime }\right) }\right) }{\sqrt{2\pi \left(
t-t^{\prime }\right) ^{3}}}\nu \left( t^{\prime }\right) \, dt^{\prime } 
\\ 
&=2\left( M_{t}\left( t\right) -\frac{1}{\sqrt{2\pi t}}\right) \nu \left(
t\right) +\int_{0}^{t}\frac{\left( \left( 1-\frac{\Psi \left( t,t^{\prime
}\right) ^{2}}{\left( t-t^{\prime }\right) }\right) \Xi \left( t,t^{\prime
}\right) \nu \left( t^{\prime }\right) -\nu \left( t\right) \right) }{\sqrt{%
2\pi \left( t-t^{\prime }\right) ^{3}}} \, dt. 
\end{aligned}
\label{Eq11d}
\end{equation*}%
Accordingly,%
\begin{equation}
\begin{aligned}
g\left( t\right) &=\left( M_{t}\left( t\right) -\frac{1}{\sqrt{2\pi t}}%
\right) \nu \left( t\right) \\ 
&+\frac{1}{2}\int_{0}^{t}\frac{\left( \left( 1-\frac{\Psi \left( t,t^{\prime
}\right) ^{2}}{\left( t-t^{\prime }\right) }\right) \Xi \left( t,t^{\prime
}\right) \nu \left( t^{\prime }\right) -\nu \left( t\right) \right) }{\sqrt{%
2\pi \left( t-t^{\prime }\right) ^{3}}}dt^{\prime }+\frac{\left( M\left(
t\right) +z\right) \exp \left( -\frac{\left( M\left( t\right) +z\right) ^{2}%
}{2t}\right) }{2\sqrt{2\pi t^{3}}}.%
\end{aligned}
\label{Eq12}
\end{equation}

\subsection{Direct computation of loss rate}

Alternatively, we can represent 
$g\left( t\right)$ using (\ref{Eq1})
by%
\begin{equation*}
\begin{aligned}
g\left( t\right) &=-\frac{d}{dt}\int_{0}^{\infty }p\left( t,x\right) dx,%
\end{aligned}
\end{equation*}%
so that%
\begin{equation*}
\begin{aligned}
g\left( t\right) &=-\frac{d}{dt}\int_{0}^{t}\left( \int_{0}^{\infty }\frac{%
\left( x-\Psi \left( t,t^{\prime }\right) \right) \exp \left( -\frac{\left(
x-\Psi \left( t,t^{\prime }\right) \right) ^{2}}{2\left( t-t^{\prime
}\right) }\right) }{\sqrt{2\pi \left( t-t^{\prime }\right) ^{3}}}\, dx\right)
\nu \left( t^{\prime }\right) dt^{\prime } \\ 
&-\frac{d}{dt}\int_{0}^{\infty }\frac{\exp \left( -\frac{\left( x-M\left(
t\right) -z\right) ^{2}}{2t}\right) }{\sqrt{2\pi t}}\, dx \\ 
&=-\frac{d}{dt}\int_{0}^{t}\left( \int_{-\Psi \left( t,t^{\prime }\right)
}^{\infty }\frac{\xi \exp \left( -\frac{\xi ^{2}}{2\left( t-t^{\prime
}\right) }\right) }{\sqrt{2\pi \left( t-t^{\prime }\right) ^{3}}}\, d\xi
\right) \nu \left( t^{\prime }\right) dt^{\prime } \nonumber -\frac{d}{dt}\int_{-\frac{\left( M\left( t\right) +z\right) }{\sqrt{t}}%
}^{\infty }\frac{\exp \left( -\frac{\xi ^{2}}{2}\right) }{\sqrt{2\pi }} \, d\xi \\%
& =\frac{d}{dt}\int_{0}^{t}\frac{\left( \int_{-\Psi \left( t,t^{\prime
}\right) }^{\infty }d\left( \exp \left( -\frac{\xi ^{2}}{2\left( t-t^{\prime
}\right) }\right) \right) \right) }{\sqrt{2\pi \left( t-t^{\prime }\right) }}%
\nu \left( t^{\prime }\right) dt^{\prime }-\frac{d}{dt}\Phi\left( 
\frac{M\left( t\right) +z}{\sqrt{t}}\right) \\ 
&=-\frac{d}{dt}\int_{0}^{t}\frac{\Xi \left( t,t^{\prime }\right) }{\sqrt{2\pi
\left( t-t^{\prime }\right) }}\nu \left( t^{\prime }\right) dt^{\prime
}-\left( M_{t}\left( t\right) -\frac{\left( M\left( t\right) +z\right) }{2t}%
\right) \frac{\exp \left( -\frac{\left( M\left( t\right) +z\right) ^{2}}{2t}%
\right) }{\sqrt{2\pi t}}. 
\end{aligned}%
\end{equation*}

As an aside, we can verify easily by direct computation that the two expressions for $g$ agree. 
We apply the following lemma,
which will also be useful in Section \ref{subsec:pert}.

\begin{lemma}
	Consider a differentiable function $\Xi(t, t')$ such that $\Xi(t, t) = 1$. Then,
	\begin{equation*}
	\begin{aligned}
		\frac{d}{dt}\int_{0}^{t}\frac{\Xi \left( t,t^{\prime }\right) \nu \left(
t^{\prime }\right) }{\sqrt{2\pi \left( t-t^{\prime }\right) }}\, dt^{\prime } &=
\frac{\nu \left( t\right) }{\sqrt{2\pi t}}+\frac{1}{2}\int_{0}^{t}\frac{%
\nu \left( t\right) -\left( \Xi \left( t,t^{\prime }\right) -2\left(
t-t^{\prime }\right) \Xi _{t}\left( t,t^{\prime }\right) \right) \nu \left(
t^{\prime }\right) }{\sqrt{2\pi \left( t-t^{\prime }\right) ^{3}}}\, dt^{\prime
} \\
&= \int_{0}^{t}\frac{\left( \left( \Xi \left( t,t^{\prime }\right) -2\left(
t-t^{\prime }\right) \Xi _{t}\left( t,t^{\prime }\right) \right) \nu \left(
t^{\prime }\right) \right) _{t^{\prime }}}{\sqrt{2\pi \left( t-t^{\prime
}\right) }} \, dt^{\prime }.
	\end{aligned}
	\end{equation*}
\label{lemma1}
\end{lemma}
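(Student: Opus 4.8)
The plan is to tame the $(t-t')^{-1/2}$ singularity at $t'=t$ before differentiating, and then to read off both stated forms from one algebraic identity and one integration by parts. Throughout set $\Theta(t,t'):=\Xi(t,t')-2(t-t')\Xi_t(t,t')$; since $\Xi(t,t)=1$ one has $\Theta(t,t)=1$ as well. The two elementary facts I will lean on are the pointwise identity $\partial_t\bigl[\Xi(t,t')(t-t')^{-1/2}\bigr]=-\tfrac12\,\Theta(t,t')(t-t')^{-3/2}$ together with $\partial_t(t-t')^{-1/2}=-\tfrac12(t-t')^{-3/2}$.

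First I would regularise. Writing $\Xi(t,t')\nu(t')=\bigl(\Xi(t,t')\nu(t')-\nu(t)\bigr)+\nu(t)$, the bracket is $O(t-t')$ at $t'=t$, so $\int_0^t\bigl(\Xi(t,t')\nu(t')-\nu(t)\bigr)/\sqrt{2\pi(t-t')}\,dt'$ has a continuous integrand, and the leftover integrates explicitly to $\nu(t)\sqrt{2t/\pi}$. Equivalently --- and this is the version I would actually differentiate rigorously --- the substitution $t-t'=s^2$ rewrites the whole integral as $\sqrt{2/\pi}\int_0^{\sqrt t}\Xi(t,t-s^2)\nu(t-s^2)\,ds$, a smooth integrand on a moving interval.

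Next, differentiate. By the Leibniz rule (the boundary contribution at $t'=t$ vanishes because the regularised integrand does) together with the two identities above, and adding $\tfrac{d}{dt}\bigl[\nu(t)\sqrt{2t/\pi}\bigr]=\nu(t)/\sqrt{2\pi t}+\nu'(t)\sqrt{2t/\pi}$, the terms involving $\nu'(t)$ cancel and what remains is precisely the first claimed expression $\nu(t)/\sqrt{2\pi t}+\tfrac12\int_0^t\bigl(\nu(t)-\Theta(t,t')\nu(t')\bigr)/\sqrt{2\pi(t-t')^3}\,dt'$. Carried out through the $s^2$-substitution, the same answer emerges after undoing the change of variables and invoking the identity $\Xi(t,0)-\Theta(t,0)-2t\,\Xi_t(t,0)=0$ to clear the moving-endpoint term at $s=\sqrt t$. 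Finally, for the second form, in the first expression replace $\nu(t)-\Theta(t,t')\nu(t')$ by $-\bigl(\Theta(t,t')\nu(t')-\nu(t)\bigr)$ and integrate by parts in $t'$ using $(t-t')^{-3/2}=2\,\partial_{t'}(t-t')^{-1/2}$; the $t'=t$ boundary term vanishes since $\Theta(t,t)=1$ makes the bracket $O(t-t')$, leaving $\int_0^t\bigl(\Theta(t,t')\nu(t')\bigr)_{t'}/\sqrt{2\pi(t-t')}\,dt'$.

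The main obstacle is justifying differentiation under the integral across the integrable singularity: even after subtracting $\nu(t)$, the $t$-derivative of the integrand is only $O((t-t')^{-1/2})$, so I would route everything through the $t-t'=s^2$ change of variables, where the sole delicate point is tracking the moving endpoint $s=\sqrt t$. I also note that the boundary term discarded at $t'=0$ in the last integration by parts equals $\nu(0)\Theta(t,0)/\sqrt{2\pi t}$, so the second equality implicitly uses $\nu(0)=0$ (it would fail, for instance, for $\Xi\equiv\nu\equiv1$); this is automatic for the heat-potential weight solving \eqref{Eq10}, whose inhomogeneous term tends to $0$ as $t\to0^+$, and in the perturbation setting where the lemma is reapplied.
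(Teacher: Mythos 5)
Your proposal is correct and reaches both displayed identities, but by a different route from the paper. The paper never differentiates under the singular integral: it first rewrites the integral as a Stieltjes integral $-2\int_0^t \Xi\,\nu\,d\sqrt{(t-t')/(2\pi)}$, integrates by parts to move the singularity onto explicit boundary terms, differentiates the resulting smooth expression, and then integrates by parts again, carrying along terms proportional to $\nu(0)$ that cancel only at the end. You instead regularise first (subtracting $\nu(t)$, or equivalently substituting $t-t'=s^2$ so the integrand is smooth on a moving interval), differentiate under the integral using $\partial_t\bigl[\Xi(t,t')(t-t')^{-1/2}\bigr]=-\tfrac12\Theta(t,t')(t-t')^{-3/2}$ with $\Theta=\Xi-2(t-t')\Xi_t$, and then perform a single integration by parts in $t'$ to pass to the second form. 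Your route makes the interchange of $d/dt$ and the integral the only delicate analytic point, and you handle it cleanly via the $s^2$ change of variables; the paper's route keeps every step inside classical integration by parts at the cost of a longer bookkeeping of boundary terms. Both yield the first equality under exactly the stated hypotheses.

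Your remark about the discarded $t'=0$ boundary term is not a defect of your argument but a genuine catch: the paper's own proof passes from $\frac{\nu(t)}{\sqrt{2\pi t}}+\int_0^t\bigl(\nu(t)-\Theta(t,t')\nu(t')\bigr)\,d\bigl((2\pi(t-t'))^{-1/2}\bigr)$ to the final form while silently dropping the contribution $\Theta(t,0)\nu(0)/\sqrt{2\pi t}$ at $t'=0$, so the second equality of the lemma as stated requires $\nu(0)=0$; your counterexample $\Xi\equiv\nu\equiv1$ (left side $1/\sqrt{2\pi t}$, right side $0$) confirms this. The first equality is unaffected, which is why its use in Section 3.3 is safe, and the second equality is applied in Section 4.1 with $\nu=\nu_0 g_0$, which vanishes to all orders at $t=0$, so the paper's applications are sound — but the hypothesis $\nu(0)=0$ (or the extra term $\Theta(t,0)\nu(0)/\sqrt{2\pi t}$) deserves to be stated explicitly, as you do.
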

\begin{proof}
See Appendix \ref{app:prooflemma}.
\end{proof}

We use (\ref{Eq10}) and rewrite the second term in the form%
\begin{equation*}
M_{t}\left( t\right) \frac{\exp \left( -\frac{\left( M\left( t\right)
+z\right) ^{2}}{2t}\right) }{\sqrt{2\pi t}}=-M_{t}\left( t\right) \nu \left(
t\right) +M_{t}\left( t\right) \int_{0}^{t}\frac{\Psi \left( t,t^{\prime
}\right) \Xi \left( t,t^{\prime }\right) }{\sqrt{2\pi \left( t-t^{\prime
}\right) ^{3}}}\nu \left( t^{\prime }\right) dt^{\prime }.%
\end{equation*}%
Using the first equality in the lemma, we arrive at the following expression%
\begin{equation*}
\begin{aligned}
g\left( t\right) &=\left( M_{t}\left( t\right) -\frac{1}{\sqrt{2\pi t}}%
\right) \nu \left( t\right) \\ 
&-\frac{1}{2}\int_{0}^{t}\frac{\nu \left( t\right) -\left( \Xi \left(
t,t^{\prime }\right) -2\left( M_{t}\left( t\right) \Psi \left( t,t^{\prime
}\right) \Xi \left( t,t^{\prime }\right) +\left( t-t^{\prime }\right) \Xi
_{t}\left( t,t^{\prime }\right) \right) \right) \nu \left( t^{\prime
}\right) }{\sqrt{2\pi \left( t-t^{\prime }\right) ^{3}}} \, dt^{\prime } \\ 
& +\frac{\left( M\left( t\right) +z\right) \exp \left( -\frac{\left( M\left(
t\right) +z\right) ^{2}}{2t}\right) }{2\sqrt{2\pi t^{3}}}.%
\end{aligned}
\end{equation*}%
We notice that%
\begin{equation*}
\Xi _{t}\left( t,t^{\prime }\right) =\left( -\frac{M_{t}\left( t\right) \Psi
\left( t,t^{\prime }\right) }{\left( t-t^{\prime }\right) }+\frac{\Psi
\left( t,t^{\prime }\right) ^{2}}{2\left( t-t^{\prime }\right) ^{2}}\right)
\Xi \left( t,t^{\prime }\right),%
\end{equation*}%
so that%
\begin{equation*}
\begin{aligned}
& \Xi \left( t,t^{\prime }\right) -2\left( M_{t}\left( t\right) \Psi \left(
t,t^{\prime }\right) \Xi \left( t,t^{\prime }\right) +\left( t-t^{\prime
}\right) \Xi _{t}\left( t,t^{\prime }\right) \right) 
=\left( 1-\frac{\Psi \left( t,t^{\prime }\right) ^{2}}{\left( t-t^{\prime
}\right) }\right) \Xi \left( t,t^{\prime }\right),%
\end{aligned}
\end{equation*}%
from which (\ref{Eq12}) follows.

\section{Solution of the McKean-Vlasov equation}
\label{sec:solMcKV}

Now, for the McKean-Vlasov equation
(\ref{self-consistent}) 
we set in (\ref{Eq10}) and (\ref{Eq12})
\begin{equation*}
\begin{aligned}
M\left( t\right) &=-\alpha \int_{0}^{t}g\left( t^{\prime }\right) dt^{\prime
}, \\ 
\Psi \left( t,t^{\prime }\right) &=M\left( t\right) -M\left( t^{\prime
}\right) =-\alpha \int_{t^{\prime }}^{t}g\left( t^{\prime \prime }\right)
dt^{\prime  \prime}=-\alpha \Omega \left( t,t^{\prime }\right),%
\end{aligned}
\end{equation*}%
to obtain a system of
integral equations 
\begin{equation}
\left\{ 
\begin{aligned}
\nu \left( t\right) &+\int_{0}^{t}\frac{\alpha \Omega \left( t,t^{\prime
}\right) \exp \left( -\frac{\alpha ^{2}\Omega \left( t,t^{\prime }\right)
^{2}}{2\left( t-t^{\prime }\right) }\right) }{\sqrt{2\pi \left( t-t^{\prime
}\right) ^{3}}}\nu \left( t^{\prime }\right) dt^{\prime }+\frac{\exp \left( -%
\frac{\left( \alpha \Omega \left( t,0\right) -z\right) ^{2}}{2t}\right) }{%
\sqrt{2\pi t}}=0, \\ 
g\left( t\right) &+\left( \alpha g\left( t\right) +\frac{1}{\sqrt{2\pi t}}%
\right) \nu \left( t\right) + \frac{\left( \alpha \Omega \left( t,0\right) -z\right) \exp \left( -\frac{%
\left( \alpha \Omega \left( t,0\right) -z\right) ^{2}}{2t}\right) }{2\sqrt{%
2\pi t^{3}}} \\ 
&+\frac{1}{2}\int_{0}^{t}\frac{\left( \nu \left( t\right) -\left( 1-\frac{%
\alpha ^{2}\Omega \left( t,t^{\prime }\right) ^{2}}{\left( t-t^{\prime
}\right) }\right) \exp \left( -\frac{\alpha ^{2}\Omega \left( t,t^{\prime
}\right) ^{2}}{2\left( t-t^{\prime }\right) }\right) \nu \left( t^{\prime
}\right) \right) }{\sqrt{2\pi \left( t-t^{\prime }\right) ^{3}}} \, dt^{\prime } = 0.%
\end{aligned}%
\right.  
\label{Eq14}
\end{equation}

In Appendix \ref{app_special_cases}, we give the explicit solution for special cases, in particular 
when there is no feedback, $\alpha =0$, $M\left( t\right) =0$.
In general, only approximations to the solution can be found. We give an asymptotic and a numerical approach in the remainder of this section.

\subsection{Perturbation solution}
\label{subsec:pert}

We expand the solution of (\ref{Eq14}) formally in powers of $\alpha $:%
\begin{equation*}
\left( \nu \left( t\right) ,g\left( t\right) \right) =\left( \nu _{0}\left(
t\right) ,g_{0}\left( t\right) \right) +\alpha \left( \nu _{1}\left(
t\right) ,g_{1}\left( t\right) \right) +\alpha ^{2}\left( \nu _{2}\left(
t\right) ,g_{2}\left( t\right) \right) +...,  
\end{equation*}%
which we will truncate after the first two terms. This will give us an analytical expression 
which can be expected to be a good approximation for small values of $\alpha$. 

We get the following systems for $\left( \nu _{0}\left( t\right), g_{0}\left( t\right) \right) $ and $\left( \nu _{1}\left( t\right), g_{1}\left( t\right) \right) $:%
\begin{equation*}
\left\{ 
\begin{aligned}
&\nu _{0}\left( t\right) +\frac{\exp \left( -\frac{z^{2}}{2t}\right) }{\sqrt{%
2\pi t}}=0, \\ 
&g_{0}\left( t\right) +\frac{1}{\sqrt{2\pi t}}\nu _{0}\left( t\right) +\frac{1%
}{2}\int_{0}^{t}\frac{\left( \nu _{0}\left( t\right) -\nu _{0}\left(
t^{\prime }\right) \right) }{\sqrt{2\pi \left( t-t^{\prime }\right) ^{3}}}%
dt^{\prime }-\frac{z\exp \left( -\frac{z^{2}}{2t}\right) }{2\sqrt{2\pi t^{3}}%
}=0.%
\end{aligned}%
\right.   
\end{equation*}%
\begin{equation*}
\left\{ 
\begin{aligned}
& \nu _{1}\left( t\right) +\int_{0}^{t}\frac{\Omega _{0}\left( t,t^{\prime
}\right) }{\sqrt{2\pi \left( t-t^{\prime }\right) ^{3}}}\nu _{0}\left(
t^{\prime }\right) dt^{\prime }+\frac{z\Omega _{0}\left( t,0\right) \exp
\left( -\frac{z^{2}}{2t}\right) }{\sqrt{2\pi t^{3}}}=0, \\ 
& g_{1}\left( t\right) +g_{0}\left( t\right) \nu _{0}\left( t\right) +\frac{1}{%
\sqrt{2\pi t}}\nu _{1}\left( t\right) +\frac{1}{2}\int_{0}^{t}\frac{\left(
\nu _{1}\left( t\right) -\nu _{1}\left( t^{\prime }\right) \right) }{\sqrt{%
2\pi \left( t-t^{\prime }\right) ^{3}}}\, dt^{\prime } \\ 
&+\left( 1-\frac{z^{2}}{t}\right) \frac{\Omega _{0}\left( t,0\right) \exp
\left( -\frac{z^{2}}{2t}\right) }{2\sqrt{2\pi t^{3}}}=0,%
\end{aligned}%
\right.  
\end{equation*}%
where $\Omega_0 \left( t,t^{\prime }\right) =\int_{t^{\prime }}^{t}g_0\left(
t^{\prime \prime }\right) dt^{\prime  \prime}$.
The equations for $g_0$ and $g_1$ can be written as %
\begin{equation*}
\begin{aligned}
&g_{0}\left( t\right) +\int_{0}^{t}\frac{\nu _{0t}\left( t^{\prime }\right) }{%
\sqrt{2\pi \left( t-t^{\prime }\right) ^{3}}}\, dt^{\prime }-\frac{z\exp \left(
-\frac{z^{2}}{2t}\right) }{2\sqrt{2\pi t^{3}}}=0, \\ 
&g_{1}\left( t\right) +g_{0}\left( t\right) \nu _{0}\left( t\right)
+\int_{0}^{t}\frac{\nu _{1t}\left( t^{\prime }\right) }{\sqrt{2\pi \left(
t-t^{\prime }\right) }}\, dt^{\prime } +\left( 1-\frac{z^{2}}{t}\right) \frac{\Omega _{0}\left( t,0\right) \exp
\left( -\frac{z^{2}}{2t}\right) }{2\sqrt{2\pi t^{3}}}=0.%
\end{aligned}%
\end{equation*}%
Thus, using the results in Appendix \ref{app_special_cases} for $\alpha = 0$,%
\begin{equation*}
\begin{aligned}
\nu _{0}\left( t\right) &=-\frac{\exp \left( -\frac{z^{2}}{2t}\right) }{\sqrt{%
2\pi t}}, \qquad  g_{0}\left( t\right) =\frac{z\exp \left( -\frac{z^{2}}{2t}%
\right) }{\sqrt{2\pi t^{3}}} \\ 
 \Omega _{0}\left( t,t^{\prime }\right)  &= 2\left( \Phi\left( \frac{z}{\sqrt{t^{\prime }}}\right) - \Phi\left( \frac{z}{\sqrt{%
t}}\right) \right) , \\ 
\Omega _{0t}\left( t,t^{\prime }\right) &= \frac{z \exp \left( -\frac{z^{2}}{2t}%
\right) }{\sqrt{2\pi t^{3}}} = g_0(t), \qquad
\Omega _{0t'}\left( t,t^{\prime }\right)
= -g_0(t').%
\end{aligned}
\end{equation*}%
Next,%
\begin{equation}
\begin{aligned}
\nu _{1}\left( t\right) &=-\int_{0}^{t}\frac{\Omega _{0}\left( t,t^{\prime
}\right) }{\sqrt{2\pi \left( t-t^{\prime }\right) ^{3}}}\nu _{0}\left(
t^{\prime }\right) dt^{\prime }-\frac{z\Omega _{0}\left( t,0\right) \exp
\left( -\frac{z^{2}}{2t}\right) }{\sqrt{2\pi t^{3}}} \\ 
&=2\int_{0}^{t}\frac{\left( \Phi\left( \frac{z}{\sqrt{t}}\right) -\Phi\left( \frac{z%
}{\sqrt{t^{\prime }}}\right) \right) }{\sqrt{2\pi \left( t-t^{\prime
}\right) ^{3}}}\frac{\exp \left( -\frac{z^{2}}{2t^{\prime }}\right) }{\sqrt{%
2\pi t^{\prime }}}\, dt^{\prime }+\frac{2z\left( \Phi\left( \frac{z}{\sqrt{t}}%
\right) -1\right) \exp \left( -\frac{z^{2}}{2t}\right) }{\sqrt{2\pi t^{3}}},
\\ 
g_{1}\left( t\right) &=-g_{0}\left( t\right) \nu _{0}\left( t\right)
-\int_{0}^{t}\frac{\nu _{1t}\left( t^{\prime }\right) }{\sqrt{2\pi \left(
t-t^{\prime }\right) }}\, dt^{\prime }-\left( 1-\frac{z^{2}}{t}\right) \frac{%
\Omega _{0}\left( t,0\right) \exp \left( -\frac{z^{2}}{2t}\right) }{2\sqrt{%
2\pi t^{3}}},%
\end{aligned}
\label{Eq19}
\end{equation}%
where%
\begin{equation*}
\Omega _{0}\left( t,0\right) =2\left( 1-\Phi\left( \frac{z}{\sqrt{t}}\right)
\right) .  
\end{equation*}%
We can write $\nu _{1}\left( t\right) $ in the form%
\begin{equation*}
\nu _{1}\left( t\right) =-\int_{0}^{t}\frac{\omega _{0}\left( t,t^{\prime
}\right) }{\sqrt{2\pi \left( t-t^{\prime }\right) }}g_{0}\left( t^{\prime
}\right) \nu _{0}\left( t^{\prime }\right) dt^{\prime }-\frac{z\Omega
_{0}\left( t,0\right) \exp \left( -\frac{z^{2}}{2t}\right) }{\sqrt{2\pi t^{3}%
}},%
\end{equation*}%
where%
\begin{equation*}
\begin{aligned}
\omega _{0}\left( t,t^{\prime }\right) &=\frac{\Omega _{0}\left( t,t^{\prime
}\right) }{g_{0}\left( t^{\prime }\right) \left( t-t^{\prime }\right) }=-%
\frac{2\left( \Phi\left( \frac{z}{\sqrt{t}}\right) -\Phi\left( \frac{z}{\sqrt{%
t^{\prime }}}\right) \right) \sqrt{2\pi t^{\prime 3}}\exp \left( \frac{z^{2}%
}{2t^{\prime }}\right) }{z\left( t-t^{\prime }\right) }, \quad t\neq t', \\ 
\omega _{0}\left( t,t\right) &=1,%
\end{aligned}
\end{equation*}%
and obtain an expression for $\nu _{1t}$:%
\begin{equation}
\begin{aligned}
\nu _{1t}\left( t^{\prime }\right) &=-\frac{d}{dt}\int_{0}^{t}\frac{\omega
_{0}\left( t,t^{\prime }\right) }{\sqrt{2\pi \left( t-t^{\prime }\right) }}%
g_0(t') \nu _{0}\left( t^{\prime }\right) dt^{\prime }-\frac{d}{dt}\left( \frac{%
z\Omega _{0}\left( t,0\right) \exp \left( -\frac{z^{2}}{2t}\right) }{\sqrt{%
2\pi t^{3}}}\right).  
\end{aligned}
\label{eq_nu1t}
\end{equation}
To compute the first term of \eqref{eq_nu1t}, we use the second equality in Lemma \ref{lemma1} with $\nu(t) = \nu_0(t) g_0(t)$ and $\Xi(t, t') = \omega_0(t, t')$, to get
\begin{equation}
\begin{aligned}
\nu _{1t}\left( t^{\prime }\right) &=-\int_{0}^{t}\frac{\left( \left( \omega _{0}\left( t,t^{\prime }\right)
-2\left( t-t^{\prime }\right) \omega _{0t}\left( t,t^{\prime }\right)
\right) g_{0}\left( t^{\prime }\right) \nu _{0}\left( t^{\prime }\right)
\right) _{t^{\prime }}}{\sqrt{2\pi \left( t-t^{\prime }\right) }} \, dt^{\prime }
\\ 
&-\frac{d}{dt}\left( \frac{z\Omega _{0}\left( t,0\right) \exp \left( -\frac{%
z^{2}}{2t}\right) }{\sqrt{2\pi t^{3}}}\right)  \\ 
&=-\int_{0}^{t}\frac{\left( \left( \frac{3\Omega _{0}\left( t,t^{\prime
}\right) }{\left( t-t^{\prime }\right) }-2\Omega _{0t}\left( t,t^{\prime
}\right) \right) \nu _{0}\left( t^{\prime }\right) \right) _{t^{\prime }}}{%
\sqrt{2\pi \left( t-t^{\prime }\right) }} \, dt^{\prime }-\frac{d}{dt}\left( 
\frac{z\Omega _{0}\left( t,0\right) \exp \left( -\frac{z^{2}}{2t}\right) }{%
\sqrt{2\pi t^{3}}}\right)  \\ 
&=\int_{0}^{t}\frac{ \left( \frac{3 g_0(t') }{\left( t-t^{\prime }\right) } - 3 \frac{\Omega_0(t, t')}{(t - t')^2} \right) \nu_0(t') - \left(3 \frac{\Omega_0(t, t')}{(t - t')}-2g_{0}\left( t\right) \right) \nu _{0t}\left( t^{\prime }\right) }{%
\sqrt{2\pi \left( t-t^{\prime }\right) }} \, dt^{\prime } \\ 
&-\frac{z^{2}\exp \left( -\frac{z^{2}}{t}\right) }{2\pi t^{3}}+2z\left(
1-\Phi\left( \frac{z}{\sqrt{t}}\right) \right) \left( 3-\frac{z^{2}}{t}\right) 
\frac{\exp \left( -\frac{z^{2}}{2t}\right) }{2\sqrt{2\pi t^{5}}}.
\end{aligned}
\label{Eq23}
\end{equation}%
Substituting this in the second
equation (\ref{Eq19}) yields an expression for $g_{1}\left(
t\right)$, which can be evaluated by numerical integration.

Finally, we evaluate the complexity of the computation of $g_1(t)$. Consider numerical quadrature with $N$ points. First, we precompute $\nu_{1t}(t)$ using  \eqref{Eq23}; it can be done in $O(N^2)$ operations. Then, we can compute $g_1(t)$ using the second equation in \eqref{Eq19} with precomputed $\nu_{1t}(t)$ again in $O(N^2)$. Thus, the total complexity of the perturbation method is $O(N^2)$.

\subsection{Numerical solution}
\label{numerical_solution}

In this section, we present (without convergence analysis) a simple method for the numerical approximation of the solution to the coupled Volterra equations (\ref{Eq14}).
We note that Volterra equations and their numerical solution are a well-established research field. 
For a relevant discussion of the stability and convergence of some methods for equations with a weak singularity see \cite{linz1985analytical}.
\cite{noble1969instability} discusses possible instabilities of multi-step methods in the presence of weak singularities.

A number of papers propose higher order methods and collocation techniques to improve the convergence and treat instabilities. For example, \cite{brunner1985numerical} proved the convergence for a polynomial spline collocation method with quadratures; 
\cite{kolk2009high}, \cite{kolk2009numerical}, and \cite{kolk2013numerical} used a piecewise polynomial collocation method to solve a Volterra equation with weak singularity, and derived optimal global convergence estimates and a local superconvergence result. An alternative is to consider a special functional basis,
such as Chebyshev polynomials and Bernstein polynomials (see \cite{maleknejad2007numerical} and \cite{maleknejad2011new}, respectively).
In both cases, the approximation leads to a system of linear or nonlinear algebraic equations.
\cite{hairer1985fast} developed a method based on fast Fourier transform to reduce the number of kernel evaluations on an $N$-point grid from $O(N^2)$ to $O(N (\log N)^2)$.

In this paper, for simplicity, we consider trapezoidal quadrature, with a special treatment of the interval containing the singularity,
to obtain the numerical solution recursively.
We divide the interval $\left[ 0,T\right] $ into equally spaced subintervals of
length $\Delta $ and discretize (\ref{Eq14}) appropriately. To this end,
we assume that $\nu $ and $g$ are piecewise linear with $\nu(l \Delta) = \nu_l$ and $g(l \Delta) = g_l$, so that on the interval $%
\left[ \left( l-1\right) \Delta ,l\Delta \right] $ we have%
\begin{equation*}
\begin{aligned}
\nu \left( t\right) &=\frac{\nu _{l-1}\left( l\Delta -t\right) +\nu _{l}\left(
t-\left( l-1\right) \Delta \right) }{\Delta }=\nu _{l}-\frac{\left( \nu
_{l}-\nu _{l-1}\right) }{\Delta }\left( l\Delta -t\right) , \\ 
g\left( t\right) &=\frac{g_{l-1}\left( l\Delta -t\right) +g_{l}\left(
t-\left( l-1\right) \Delta \right) }{\Delta }=g_{l}-\frac{\left(
g_{l}-g_{l-1}\right) }{\Delta }\left( l\Delta -t\right) .%
\end{aligned}
\end{equation*}%
Accordingly,%
\begin{equation*}
\begin{aligned}
&\Omega _{nl}=\int_{l\Delta }^{n\Delta }g\left( t^{\prime \prime }\right)
dt^{\prime }=\frac{\Delta }{2}\left(
g_{l}+2g_{l+1}+...+2g_{n-1}+g_{n}\right) , \\ 
&\Omega _{nl}=\Omega _{\left( n-1\right) l}+\frac{\Delta }{2}\left(
g_{n}+g_{n-1}\right) .%
\end{aligned}
\end{equation*}%
Inserting in (\ref{Eq14}), the discretized system of equations has the form%
\begin{equation}
\left\{ 
\begin{aligned}
& \nu _{n}+\alpha \dsum\limits_{l=1}^{n}\mathcal{I}_{l}^{n}+\frac{\exp \left( -%
\frac{\left( \alpha \Omega _{n0}-z\right) ^{2}}{2n\Delta }\right) }{\sqrt{%
2\pi n\Delta }}=0, \\ 
& g_{n}+\left( \alpha g_{n}+\frac{1}{\sqrt{2\pi n\Delta }}\right) \nu _{n}+%
\frac{1}{2}\dsum\limits_{l=1}^{n}\mathcal{J}_{l}^{n}+\frac{\left( \alpha
\Omega _{n0}-z\right) \exp \left( -\frac{\left( \alpha \Omega _{n0}-z\right)
^{2}}{2n\Delta }\right) }{2\sqrt{2\pi n^{3}\Delta ^{3}}}=0.%
\end{aligned}%
\right.  \label{Eq24a}
\end{equation}%
For a given $n$, all the relevant integrals $\mathcal{I}_{l},\mathcal{J}%
_{l},1\leq l<n$, can be approximated by the trapezoidal rule (or via more
accurate composite formulas, if necessary). Accordingly,%
\begin{equation}
\begin{aligned}
\mathcal{I}_{l}^{n}&=\int_{\left( l-1\right) \Delta }^{l\Delta }\frac{\Omega
\left( n\Delta ,t^{\prime }\right) \exp \left( -\frac{\alpha ^{2}\Omega
\left( n\Delta ,t^{\prime }\right) ^{2}}{2\left( n\Delta -t^{\prime }\right) 
}\right) \nu \left( t^{\prime }\right) }{\sqrt{2\pi \left( n\Delta
-t^{\prime }\right) ^{3}}}\, dt^{\prime } \\ 
&\approx \frac{1}{\sqrt{8\pi \Delta }}\left( \frac{\Omega _{nl}\exp \left( -\frac{%
\alpha ^{2}\Omega _{nl}^{2}}{2\left( n-l\right) \Delta }\right) \nu _{l}}{%
\left( n-l\right) ^{3/2}}+\frac{\Omega _{n\left( l-1\right) }\exp \left( -%
\frac{\alpha ^{2}\Omega _{n\left( l-1\right) }^{2}}{2\left( n-l+1\right)
\Delta }\right) \nu _{l-1}}{\left( n-l+1\right) ^{3/2}}\right) ,%
\end{aligned}
\label{Eq24b}
\end{equation}%
and
\begin{equation}
\begin{aligned}
\mathcal{J}_{l}^{n}&=\int_{\left( l-1\right) \Delta }^{l\Delta }\frac{\left(
\nu _{n}-\left( 1-\frac{\alpha ^{2}\Omega \left( n\Delta ,t^{\prime }\right)
^{2}}{\left( n\Delta -t^{\prime }\right) }\right) \exp \left( -\frac{\alpha
^{2}\Omega \left( n\Delta ,t^{\prime }\right) ^{2}}{2\left( n\Delta
-t^{\prime }\right) }\right) \nu \left( t^{\prime }\right) \right) }{\sqrt{%
2\pi \left( n\Delta -t^{\prime }\right) ^{3}}}\, dt^{\prime } \\ 
&\approx \frac{1}{\sqrt{8\pi \Delta }}\left( \frac{\left( \nu _{n}-\left( 1-\frac{%
\alpha ^{2}\Omega _{nl}^{2}}{\left( n-l\right) \Delta }\right) \exp \left( -%
\frac{\alpha ^{2}\Omega _{nl}^{2}}{2\left( n-l\right) \Delta }\right) \nu
_{l}\right) }{\left( n-l\right) ^{3/2}}\right. \\ 
&\left. +\frac{\left( \nu _{n}-\left( 1-\frac{\alpha ^{2}\Omega _{n\left(
l-1\right) }^{2}}{\left( n-l+1\right) \Delta }\right) \exp \left( -\frac{%
\alpha ^{2}\Omega _{n\left( l-1\right) }^{2}}{2\left( n-l+1\right) \Delta }%
\right) \nu _{l-1}\right) }{\left( n-l+1\right) ^{3/2}}\right). %
\end{aligned}
\label{Eq24d}
\end{equation}

However, the last two integrals, $\mathcal{I}_{n}^{n},\mathcal{J}_{n}^{n},$
require special care, because they have weak singularities. Consider the
integral $\mathcal{I}_{n}$, which has the form%
\begin{equation*}
\mathcal{I}_{n}^{n}=\int_{\left( n-1\right) \Delta }^{n\Delta }\frac{\Omega
\left( n\Delta ,t^{\prime }\right) \exp \left( -\frac{\alpha ^{2}\Omega
\left( n\Delta ,t^{\prime }\right) ^{2}}{2\left( n\Delta -t^{\prime }\right) 
}\right) }{\sqrt{2\pi \left( n\Delta -t^{\prime }\right) ^{3}}}\nu \left(
t^{\prime }\right) dt^{\prime }.%
\end{equation*}%
In view of our piecewise linearity assumption, we have%
\begin{equation*}
\Omega \left( n\Delta ,t^{\prime }\right) =g_{n}\tau -\frac{g_{n}-g_{n-1}}{%
2\Delta }\tau ^{2},  \label{Eq26}
\end{equation*}%
where $\tau =n\Delta -t^{\prime }$. Accordingly,%
\begin{equation*}
\mathcal{I}_{n}^{n}=\int_{0}^{\Delta }\frac{\left( g_{n}-\frac{g_{n}-g_{n-1}%
}{2\Delta }\tau \right) \exp \left( -\frac{\alpha ^{2}\left( g_{n}-\frac{%
\left( g_{n}-g_{n-1}\right) }{2\Delta }\tau \right) ^{2}\tau }{2}\right)
\left( \nu _{n}-\frac{\left( \nu _{n}-\nu _{n-1}\right) }{\Delta }\tau
\right) }{\sqrt{2\pi \tau }}\, d\tau .%
\end{equation*}%
A standard change of variables $\tau =u^{2}$ yields%
\begin{multline}
\mathcal{I}_{n}^{n}=\frac{2}{\sqrt{2\pi }}\int_{0}^{\sqrt{\Delta }}\left(
g_{n}-\frac{\left( g_{n}-g_{n-1}\right) }{2\Delta }u^{2}\right) \exp \left( -%
\frac{\alpha ^{2}\left( g_{n}-\frac{\left( g_{n}-g_{n-1}\right) }{2\Delta }%
u^{2}\right) ^{2}u^{2}}{2}\right) \\ 
\times \left( \nu _{n}-\frac{\left( \nu _{n}-\nu _{n-1}\right) }{\Delta }%
u^{2}\right) du.%
\label{Eq29}
\end{multline}%
The latter integral is now non-singular and can be approximated
by the trapezoidal rule:%
\begin{equation}
\begin{aligned}
\mathcal{I}_{n}^{n}&\approx \frac{1}{\sqrt{2\pi \Delta }}\left( \Delta g_{n}\nu
_{n}+\Gamma _{n}\exp \left( -\frac{\alpha ^{2}\Gamma _{n}^{2}}{2\Delta }%
\right) \nu _{n-1}\right) 
=\sqrt{\frac{\Delta }{2\pi }}g_{n}\nu _{n}+\frac{\Gamma _{n}\exp \left( -%
\frac{\alpha ^{2}\Gamma _{n}^{2}}{2\Delta }\right) }{\sqrt{2\pi \Delta }}\nu
_{n-1},%
\end{aligned}
\label{Eq30}
\end{equation}%
where%
\begin{equation*}
\Gamma _{n}=\frac{\Delta }{2}\left( g_{n}+g_{n-1}\right) .  \label{Eq36a}
\end{equation*}%
Similarly,%
\begin{equation*}
\begin{aligned}
\mathcal{J}_{n}^{n}&=\int_{\left( n-1\right) \Delta }^{n\Delta }\frac{\left(
\nu _{n}-\left( 1-\alpha ^{2}\frac{\Omega \left( n\Delta ,t^{\prime }\right)
^{2}}{\left( n\Delta -t^{\prime }\right) }\right) \exp \left( -\frac{\alpha
^{2}\Omega \left( n\Delta ,t^{\prime }\right) ^{2}}{2\left( n\Delta
-t^{\prime }\right) }\right) \nu \left( t^{\prime }\right) \right) }{\sqrt{%
2\pi \left( n\Delta -t^{\prime }\right) ^{3}}}\, dt^{\prime } \\ 
&=\int_{\left( n-1\right) \Delta }^{n\Delta }\frac{\left( \nu _{n}-\exp
\left( -\frac{\alpha ^{2}\Omega \left( n\Delta ,t^{\prime }\right) ^{2}}{%
2\left( n\Delta -t^{\prime }\right) }\right) \nu \left( t^{\prime }\right)
\right) }{\sqrt{2\pi \left( n\Delta -t^{\prime }\right) ^{3}}}\, dt^{\prime }
\\ 
&+\alpha ^{2}\int_{\left( n-1\right) \Delta }^{n\Delta }\frac{\Omega \left(
n\Delta ,t^{\prime }\right) ^{2}\exp \left( -\frac{\alpha ^{2}\Omega \left(
n\Delta ,t^{\prime }\right) ^{2}}{2\left( n\Delta -t^{\prime }\right) }%
\right) \nu \left( t^{\prime }\right) }{\left( n\Delta -t^{\prime }\right) 
\sqrt{2\pi \left( n\Delta -t^{\prime }\right) ^{3}}}\, dt^{\prime } \\ 
&=\mathcal{J}_{n}^{n,1}+\alpha ^{2}\mathcal{J}_{n}^{n,2}.%
\end{aligned}
\end{equation*}%
We have%
\begin{align}
\mathcal{J}_{n}^{n,1}&=\int_{0}^{\Delta }\frac{\left( \nu _{n}-\exp \left( -%
\frac{\alpha ^{2}\left( g_{n}-\frac{\left( g_{n}-g_{n-1}\right) }{2\Delta }%
\tau \right) ^{2}\tau }{2}\right) \left( \nu _{n}-\frac{\left( \nu _{n}-\nu
_{n-1}\right) }{\Delta }\tau \right) \right) }{\sqrt{2\pi \tau ^{3}}}\, d\tau \nonumber \\ 
&=\int_{0}^{\Delta }\frac{\left( \left( 1-\exp \left( -\frac{\alpha
^{2}\left( g_{n}-\frac{\left( g_{n}-g_{n-1}\right) }{2\Delta }\tau \right)
^{2}\tau }{2}\right) \right) \nu _{n}\right) }{\sqrt{2\pi \tau ^{3}}}\, d\tau
\nonumber \\ 
&+\frac{\left( \nu _{n}-\nu _{n-1}\right) }{\Delta }\int_{0}^{\Delta }\frac{%
\exp \left( -\frac{\alpha ^{2}\left( g_{n}-\frac{\left( g_{n}-g_{n-1}\right) 
}{2\Delta }\tau \right) ^{2}\tau }{2}\right) }{\sqrt{2\pi \tau }}\, d\tau \nonumber \\ 
&\approx \frac{\alpha ^{2}\nu _{n}}{2}\int_{0}^{\Delta }\frac{\left( g_{n}-%
\frac{\left( g_{n}-g_{n-1}\right) }{2\Delta }\tau \right) ^{2}}{\sqrt{2\pi
\tau }}\, d\tau  \label{Eq32}\\ 
&+\frac{\left( \nu _{n}-\nu _{n-1}\right) }{\Delta }\int_{0}^{\Delta }\frac{%
\exp \left( -\frac{\alpha ^{2}\left( g_{n}-\frac{\left( g_{n}-g_{n-1}\right) 
}{2\Delta }\tau \right) ^{2}\tau }{2}\right) }{\sqrt{2\pi \tau }}\, d\tau \nonumber \\ 
&=\frac{\alpha ^{2}\nu _{n}}{\sqrt{2\pi }}\int_{0}^{\sqrt{\Delta }}\left(
g_{n}-\frac{\left( g_{n}-g_{n-1}\right) }{2\Delta }u^{2}\right) ^{2}du \nonumber \\ 
&+\frac{2\left( \nu _{n}-\nu _{n-1}\right) }{\sqrt{2\pi }\Delta }\int_{0}^{%
\sqrt{\Delta }}\exp \left( -\frac{\alpha ^{2}\left( g_{n}-\frac{\left(
g_{n}-g_{n-1}\right) }{2\Delta }u^{2}\right) ^{2}u^{2}}{2}\right) \, du \nonumber \\ 
&\approx \frac{\alpha ^{2}}{2}\frac{1}{\sqrt{2\pi \Delta ^{3}}}\left( \Delta
^{2}g_{n}^{2}+\Gamma _{n}^{2}\right) \nu _{n}+\frac{1}{\sqrt{2\pi \Delta }}%
\left( 1+\exp \left( -\frac{\alpha ^{2}\Gamma _{n}^{2}}{2\Delta }\right)
\right) \left( \nu _{n}-\nu _{n-1}\right) , \nonumber%
\end{align}%
\begin{equation}
\begin{aligned}
\mathcal{J}_{n}^{n,2}&=\int_{0}^{\Delta }\frac{\left( g_{n}-\frac{%
g_{n}-g_{n-1}}{2\Delta }\tau \right) ^{2}\exp \left( -\frac{\alpha
^{2}\left( g_{n}-\frac{\left( g_{n}-g_{n-1}\right) }{2\Delta }\tau \right)
^{2}\tau }{2}\right) \left( \nu _{n}-\frac{\left( \nu _{n}-\nu _{n-1}\right) 
}{\Delta }\tau \right) }{\sqrt{2\pi \tau }}\, d\tau \\ 
&=\frac{2}{\sqrt{2\pi }}\int_{0}^{\sqrt{\Delta }}\left( g_{n}-\frac{\left(
g_{n}-g_{n-1}\right) }{2\Delta }u^{2}\right) ^{2}\exp \left( -\frac{\alpha
^{2}\left( g_{n}-\frac{\left( g_{n}-g_{n-1}\right) }{2\Delta }u^{2}\right)
^{2}u^{2}}{2}\right) \\ 
&\times \left( \nu _{n}-\frac{\left( \nu _{n}-\nu _{n-1}\right) }{\Delta }%
u^{2}\right) du \\ 
&\approx \frac{1}{\sqrt{2\pi \Delta ^{3}}}\left( \Delta ^{2}g_{n}^{2}\nu _{n}+\Gamma
_{n}^{2}\exp \left( -\frac{\alpha ^{2}\Gamma _{n}^{2}}{2\Delta }\right) \nu
_{n-1}\right) .%
\end{aligned}
\label{Eq33}
\end{equation}%
Thus,%
\begin{equation}
\begin{aligned}
\mathcal{J}_{n}^{n}&\approx \frac{1}{\sqrt{2\pi \Delta }}\left( 1+\exp \left( -\frac{%
\alpha ^{2}\Gamma _{n}^{2}}{2\Delta }\right) \right) \left( \nu _{n}-\nu
_{n-1}\right) \\ 
&+\frac{\alpha ^{2}}{\sqrt{2\pi \Delta ^{3}}}\left( \left( \frac{3}{2}\Delta
^{2}g_{n}^{2}+\Gamma _{n}^{2}\right) \nu _{n}+\Gamma _{n}^{2}\exp \left( -%
\frac{\alpha ^{2}\Gamma _{n}^{2}}{2\Delta }\right) \nu _{n-1}\right) \\ 
&=\left( \frac{\left( 1+\exp \left( -\frac{\alpha ^{2}\Gamma _{n}^{2}}{%
2\Delta }\right) \right) }{\sqrt{2\pi \Delta }}+\frac{\alpha ^{2}\left( 
\frac{3}{2}\Delta ^{2}g_{n}^{2}+\Gamma _{n}^{2}\right) }{\sqrt{2\pi \Delta
^{3}}}\right) \nu _{n} \\ 
&-\left( \frac{\left( 1+\exp \left( -\frac{\alpha ^{2}\Gamma _{n}^{2}}{%
2\Delta }\right) \right) }{\sqrt{2\pi \Delta }}-\frac{\alpha ^{2}\Gamma
_{n}^{2}\exp \left( -\frac{\alpha ^{2}\Gamma _{n}^{2}}{2\Delta }\right) }{%
\sqrt{2\pi \Delta ^{3}}}\right) \nu _{n-1}.%
\end{aligned}
\label{Eq34}
\end{equation}

By using (\ref{Eq26}) we can represent expressions (\ref{Eq24b}), (\ref%
{Eq24d}) in a recurrent form, neglecting now quadrature errors:%
\begin{equation*}
\begin{aligned}
\mathcal{I}_{l}^{n}&=\frac{1}{\sqrt{8\pi \Delta }}\left( \frac{\Omega
_{nl}\exp \left( -\frac{\alpha ^{2}\Omega _{nl}^{2}}{2\left( n-l\right)
\Delta }\right) \nu _{l}}{\left( n-l\right) ^{3/2}}+\frac{\Omega _{n\left(
l-1\right) }\exp \left( -\frac{\alpha ^{2}\Omega _{n\left( l-1\right) }^{2}}{%
2\left( n-l+1\right) \Delta }\right) \nu _{l-1}}{\left( n-l+1\right) ^{3/2}}%
\right)  \\ 
&=\mathbb{I}_{l}^{n}\left( \left. g_{n}\right\vert \nu _{l-1},\nu
_{l},g_{1},...,g_{n-1}\right) ,%
\end{aligned}
\end{equation*}%
\begin{equation*}
\begin{aligned}
\mathcal{J}_{l}^{n}&=\frac{1}{\sqrt{8\pi \Delta }}\left( \frac{\left( \nu
_{n}-\left( 1-\frac{\alpha ^{2}\Omega _{nl}^{2}}{\left( n-l\right) \Delta }%
\right) \exp \left( -\frac{\alpha ^{2}\Omega _{nl}^{2}}{2\left( n-l\right)
\Delta }\right) \nu _{l}\right) }{\left( n-l\right) ^{3/2}}\right.  \\ 
&\left. +\frac{\left( \nu _{n}-\left( 1-\frac{\alpha ^{2}\Omega _{n\left(
l-1\right) }^{2}}{\left( n-l+1\right) \Delta }\right) \exp \left( -\frac{%
\alpha ^{2}\Omega _{n\left( l-1\right) }^{2}}{2\left( n-l+1\right) \Delta }%
\right) \nu _{l-1}\right) }{\left( n-l+1\right) ^{3/2}}\right)  \\ 
&=\left( \frac{1}{\sqrt{8\pi \Delta }\left( n-l\right) ^{3/2}}+\frac{1}{\sqrt{%
8\pi \Delta }\left( n-l+1\right) ^{3/2}}\right) \nu _{n} \\ 
&-\frac{\left( 1-\frac{\alpha ^{2}\Omega _{nl}^{2}}{\left( n-l\right) \Delta }%
\right) \exp \left( -\frac{\alpha ^{2}\Omega _{nl}^{2}}{2\left( n-l\right)
\Delta }\right) \nu _{l}}{\sqrt{8\pi \Delta }\left( n-l\right) ^{3/2}}-\frac{%
\left( 1-\frac{\alpha ^{2}\Omega _{n\left( l-1\right) }^{2}}{\left(
n-l+1\right) \Delta }\right) \exp \left( -\frac{\alpha ^{2}\Omega _{n\left(
l-1\right) }^{2}}{2\left( n-l+1\right) \Delta }\right) \nu _{l-1}}{\sqrt{%
8\pi \Delta }\left( n-l+1\right) ^{3/2}} \\ 
&=\mathbb{J}_{l}^{n}\left( \left. \nu _{n},g_{n}\right\vert \nu _{l-1},\nu
_{l},g_{1},...,g_{n-1}\right)  
=\mathbb{U}_{l}^{n}\nu _{n}+\mathbb{V}_{l}^{n}\left( \left. g_{n}\right\vert
\nu _{l-1},\nu _{l},g_{1},...,g_{n-1}\right) .%
\end{aligned}
\end{equation*}%
By the same token, $\mathcal{I}_{n}^{n}$, $\mathcal{J}_{n}^{n}$ given by 
(\ref{Eq30}), (\ref{Eq34}) can be written in the form%
\begin{equation*}
\begin{aligned}
\mathcal{I}_{n}^{n}&=\mathbb{I}_{n}^{n}\left( \left. \nu
_{n},g_{n}\right\vert \nu _{n-1},g_{n-1}\right)  \\ 
&=\mathbb{A}_{n}^{n}\left( \left. g_{n}\right\vert g_{n-1}\right) \nu _{n}+%
\mathbb{B}_{n}^{n}\left( \left. g_{n}\right\vert \nu _{n-1},g_{n-1}\right) ,%
\end{aligned}
\end{equation*}%
\begin{equation*}
\begin{aligned}
\mathcal{J}_{n}^{n}&=\mathbb{J}_{n}^{n}\left( \left. \nu
_{n},g_{n}\right\vert \nu _{n-1},g_{n-1}\right)  \\ 
&=\mathbb{U}_{n}^{n}\left( \left. g_{n}\right\vert g_{n-1}\right) \nu _{n}+%
\mathbb{V}_{n}^{n}\left( \left. g_{n}\right\vert \nu _{n-1},g_{n-1}\right). 
\end{aligned}
\end{equation*}%
In view of the above, system (\ref{Eq24a}) can be written as follows:%
\begin{equation}
\left\{ 
\begin{aligned}
&\nu _{n}+\alpha \dsum\limits_{l=1}^{n-1}\mathbb{I}_{l}^{n}\left(
g_{n}\right) +\alpha \mathbb{I}_{n}^{n}\left( \nu _{n},g_{n}\right) +\frac{%
\exp \left( -\frac{\left( \alpha \Omega _{n0}-z\right) ^{2}}{2n\Delta }%
\right) }{\sqrt{2\pi n\Delta }}=0, \\ 
& g_{n}+\left( \alpha g_{n}+\frac{1}{\sqrt{2\pi n\Delta }}\right) \nu _{n}+%
\frac{1}{2}\dsum\limits_{l=1}^{n-1}\mathbb{J}_{l}^{n}\left( \nu
_{n},g_{n}\right) +\frac{1}{2}\mathbb{J}_{n}^{n}\left( \nu _{n},g_{n}\right) 
\\ 
&+\frac{\left( \alpha \Omega _{n0}-z\right) \exp \left( -\frac{\left( \alpha
\Omega _{n0}-z\right) ^{2}}{2n\Delta }\right) }{2\sqrt{2\pi n^{3}\Delta ^{3}}%
}=0,%
\end{aligned}%
\right.   \label{Eq39}
\end{equation}%
where we suppress explicit dependencies on $g_{1},...,g_{n-1},\nu
_{1},...,\nu _{n-1}$ for brevity.
provided that $g_{1},...,g_{n-1},\nu _{1},...,\nu _{n-1}$ are given.
This
system can be solved by using the Newton-Raphson method, say. As a result,
the new pair $\left( \nu _{n},g_{n}\right) $ can be found and the recurrence
advanced by one more step as required.

If so desired, system (\ref{Eq39}) can be simplified further.  
We notice that the dependence on $\nu _{n}$ is linear, eliminate $\nu _{n}$ in
favor of $g_{n}$ from the first equation, %
\begin{equation*}
\nu _{n}=-\frac{\alpha \left( \dsum\limits_{l=1}^{n-1}\mathbb{I}%
_{l}^{n}\left( g_{n}\right) +\mathbb{B}_{n}^{n}\left( g_{n}\right) \right) +%
\frac{\exp \left( -\frac{\left( \alpha \Omega _{n0}-z\right) ^{2}}{2n\Delta }%
\right) }{\sqrt{2\pi n\Delta }}}{\left( 1+\alpha \mathbb{A}_{n}^{n}\left(
g_{n}\right) \right) },%
\end{equation*}%
and substitute this expression in the second equation, obtaining a scalar
recursive nonlinear equation of the form%
\begin{equation}
\begin{aligned}
g_{n}&+\frac{1}{2}\left( \dsum\limits_{l=1}^{n-1}\mathbb{V}_{l}^{n}\left(
g_{n}\right) +\mathbb{V}_{n}^{n}\left( g_{n}\right) \right)  
-\frac{\left( \alpha g_{n}+\frac{1}{\sqrt{2\pi n\Delta }}+\frac{1}{2}\left(
\dsum\limits_{l=1}^{n-1}\mathbb{U}_{l}^{n}+\mathbb{U}_{n}^{n}\left(
g_{n}\right) \right) \right)  }{\left( 1+\alpha \mathbb{A}%
_{n}^{n}\left( g_{n}\right) \right) }\\
& \times \left( \alpha \left( \dsum\limits_{l=1}^{n-1}%
\mathbb{I}_{l}^{n}\left( g_{n}\right) +\mathbb{B}_{n}^{n}\left( g_{n}\right)
\right) +\frac{\exp \left( -\frac{\left( \alpha \Omega _{n0}-z\right) ^{2}}{%
2n\Delta }\right) }{\sqrt{2\pi n\Delta }}\right) \\ 
&+\frac{\left( \alpha \Omega _{n0}-z\right) \exp \left( -\frac{\left( \alpha
\Omega _{n0}-z\right) ^{2}}{2n\Delta }\right) }{2\sqrt{2\pi n^{3}\Delta ^{3}}%
}=0.%
\end{aligned}
\label{Eq58}
\end{equation}%

\section{Numerical tests and results}
\label{sec:results}

In this section, we first analyse the convergence (order) of the numerical method, then test the accuracy of the first order expansion against the numerical solution,
and finally perform parameter studies (in $\alpha$) to investigate the influence of the mean-field interaction on the behaviour of the solution.

\subsection{Numerical method}

To demonstrate the performance of the discretisation scheme, we compare the solution with \eqref{Eq15a}, the analytical solution, in the case $\alpha = 0$.
\begin{figure}[H]
	\begin{center}
		\subfloat[]{\includegraphics[width=0.5\textwidth]{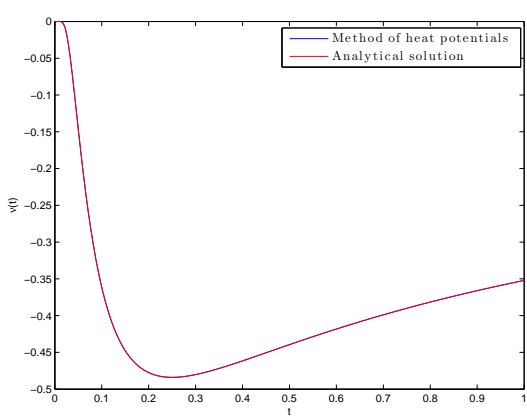}} 
		\subfloat[]{\includegraphics[width=0.5\textwidth]{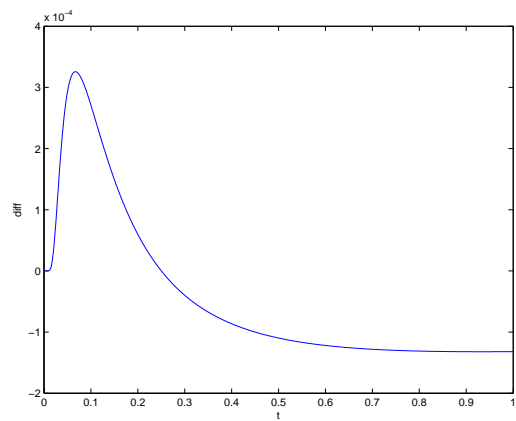}}
	\end{center}
	\vspace{-10pt}
	\caption{
	$\nu(t)$: (a) Numerical and analytical solution for $\alpha = 0$, $N=1000$ (visually indistinguishible). (b) The difference with the exact solution for $\alpha = 0$.
	}
 	\label{fig_diff}
\end{figure}

\begin{figure}[H]
	\begin{center}
		\subfloat[]{\includegraphics[width=0.5\textwidth]{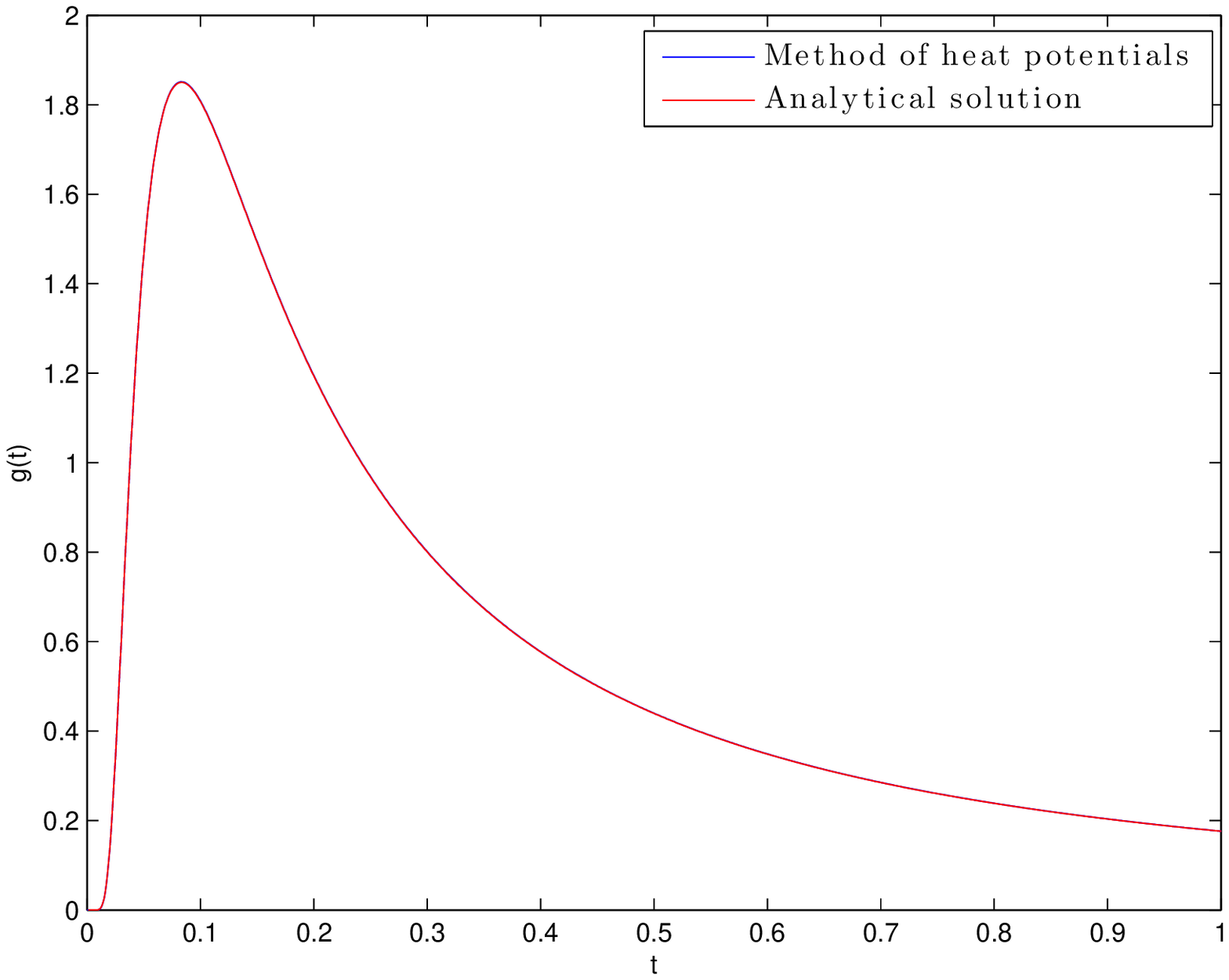}}
		\subfloat[]{\includegraphics[width=0.5\textwidth]{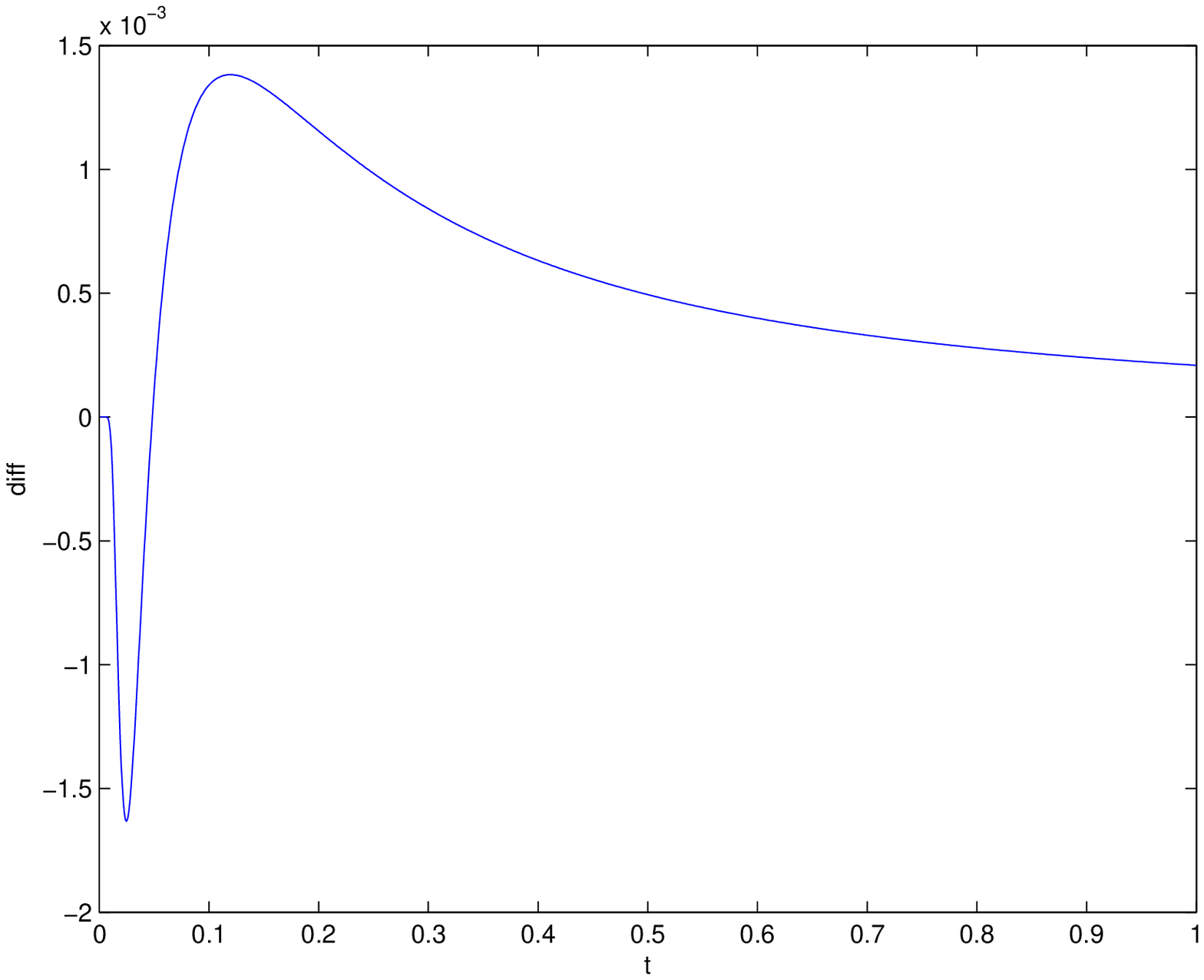}}
	\end{center}
	\vspace{-10pt}
	\caption{
	$g(t)$: (a) Numerical and analytical solution for $\alpha = 0$, $N=1000$ (visually indistinguishible). (b) The difference with the exact solution for $\alpha = 0$.}
 	\label{fig_diff1}
\end{figure}

\begin{figure}[H]
	\begin{center}
		\subfloat[]{\includegraphics[width=0.5\textwidth]{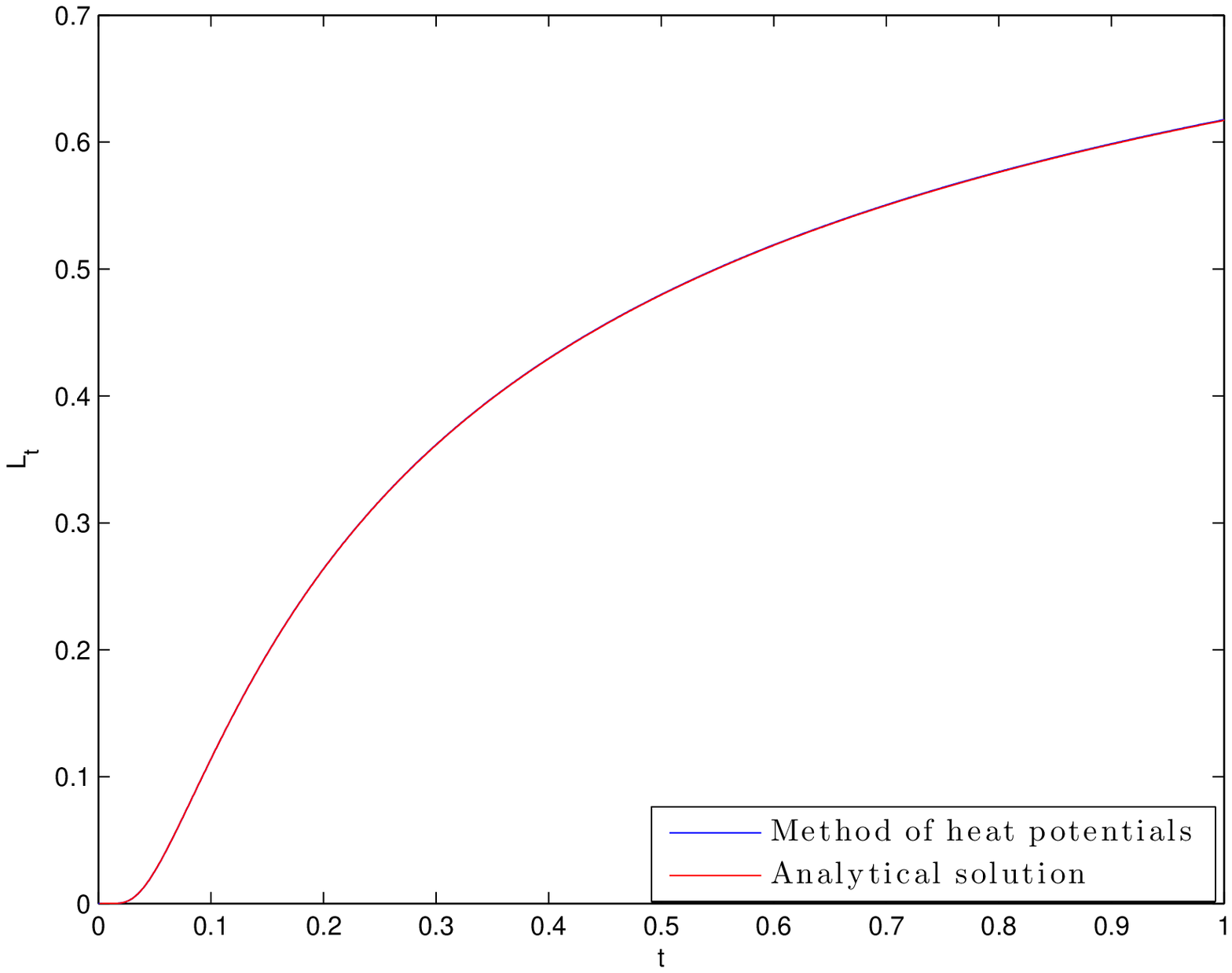}}
		\subfloat[]{\includegraphics[width=0.5\textwidth]{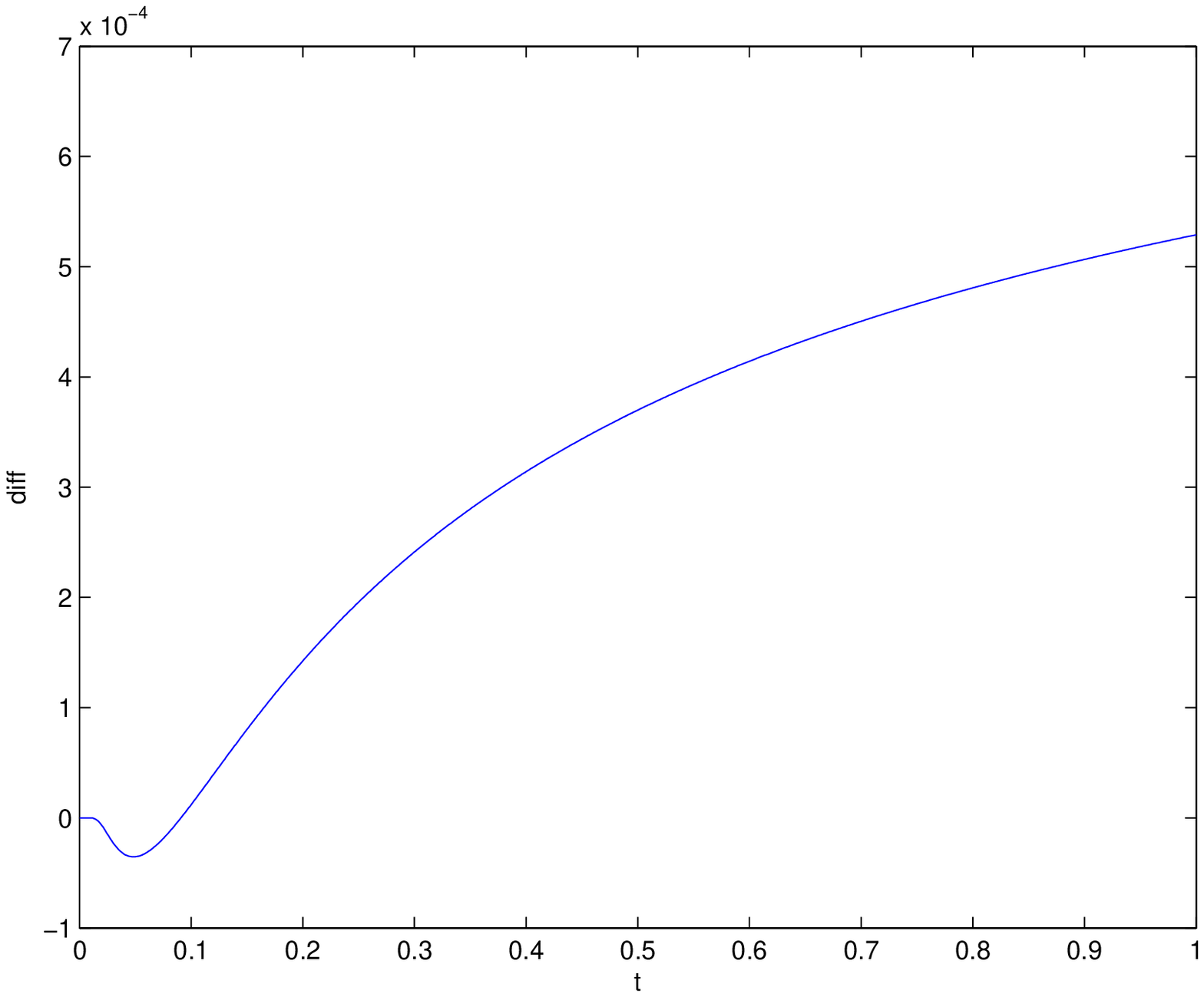}}
	\end{center}
	\vspace{-10pt}
	\caption{
	$L_t$: (a) Numerical and analytical solution for $\alpha = 0$, $N=1000$ (visually indistinguishible). (b) The difference with the exact solution for $\alpha = 0$.}
 	\label{fig_diff2}
\end{figure}

For $\alpha>0$, no closed-form solution is available and we therefore use the Euler timestepping particle method from \cite{kaushansky2018simulation}
with sufficiently many particles and timesteps as benchmark.

We illustrate the difference between our method and \cite{kaushansky2018simulation} in Figure \ref{fig_diff_MC}.
\begin{figure}[H]
	\begin{center}
		\subfloat[]{\includegraphics[width=0.5\textwidth]{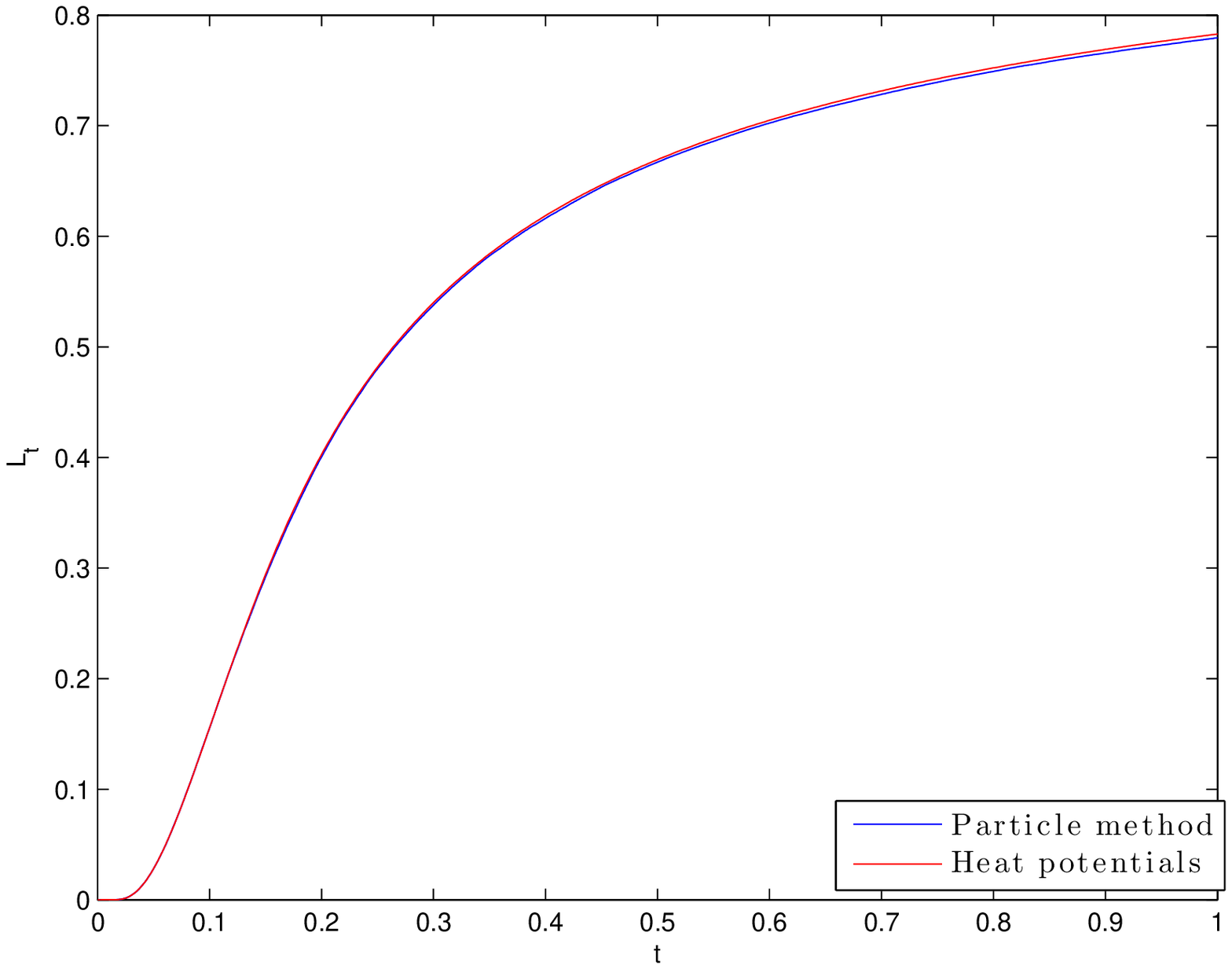}}
		\subfloat[]{\includegraphics[width=0.5\textwidth]{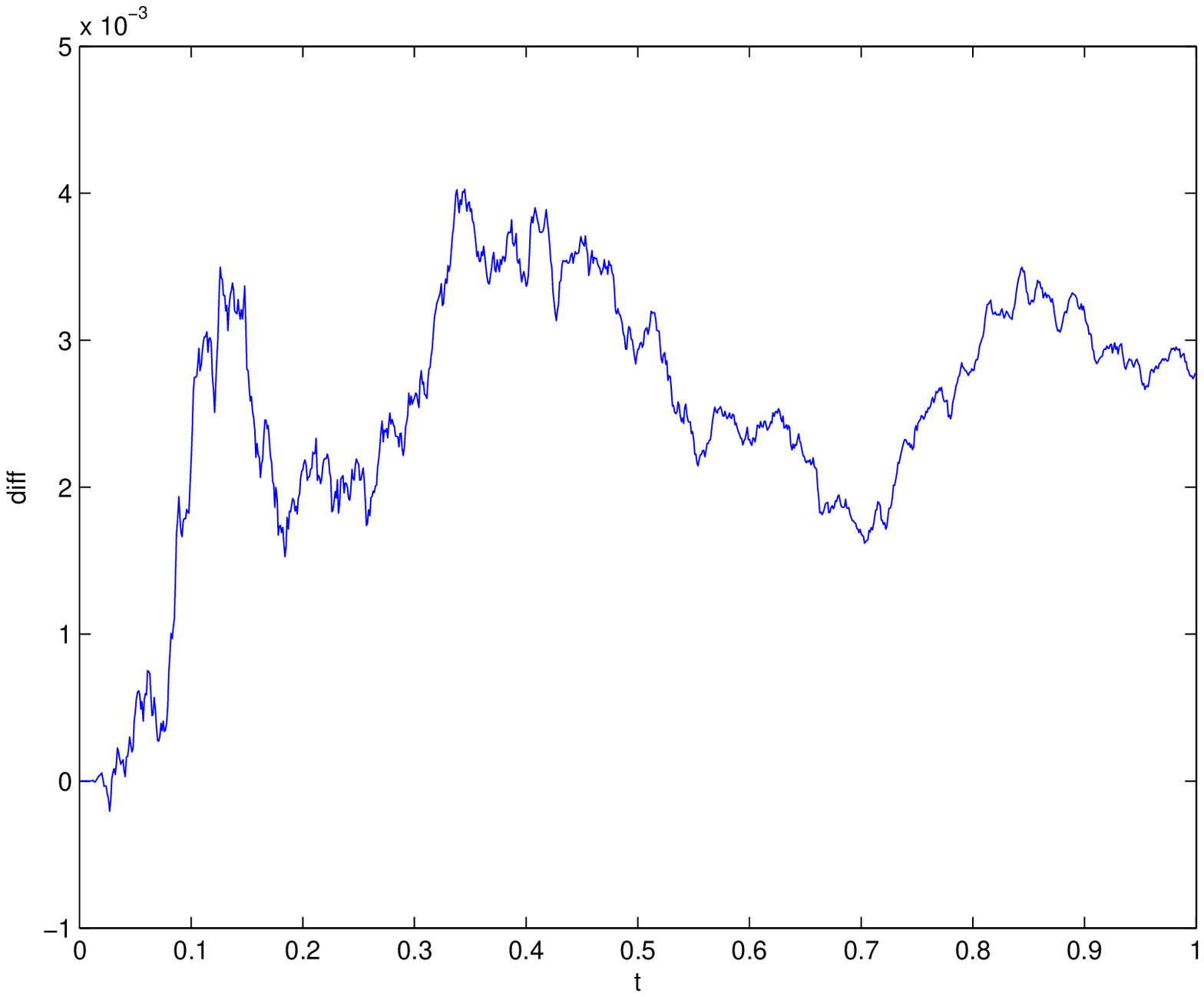}}
	\end{center}
	\vspace{-10pt}
	\caption{
	Comparison of the numerical solution by Volterra equations ($N=1000$) with that of the particle method in \cite{kaushansky2018simulation} for $\alpha = 0.5$.}
 	\label{fig_diff_MC}
\end{figure}

We now analyse the convergence order of the discretisation scheme for the Volterra equation empirically.
With $N$ time steps, the error of the approximation \eqref{Eq58} is expected to be $O(N^{-1})$, because the trapezoidal integration in \eqref{Eq29}, \eqref{Eq32}, and \eqref{Eq33} is on intervals $(0, \sqrt{\Delta})$, and the result is divided by $\sqrt{\Delta}$ after that. We empirically confirm this in Figure \ref{error_plot}. 

\begin{figure}[H]
	\begin{center}
		\subfloat[]{\includegraphics[width=0.5\textwidth]{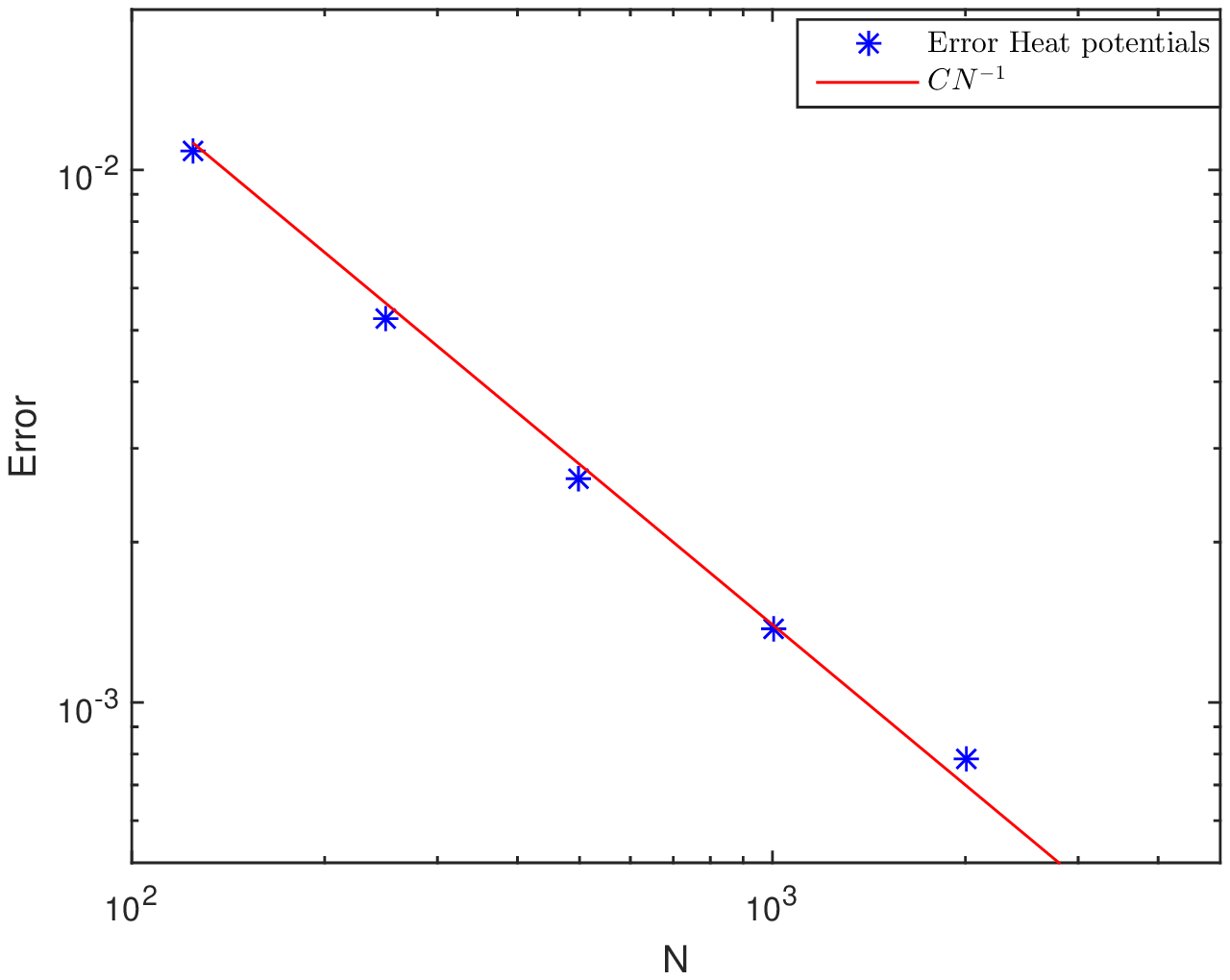}}
		\subfloat[]{\includegraphics[width=0.5\textwidth]{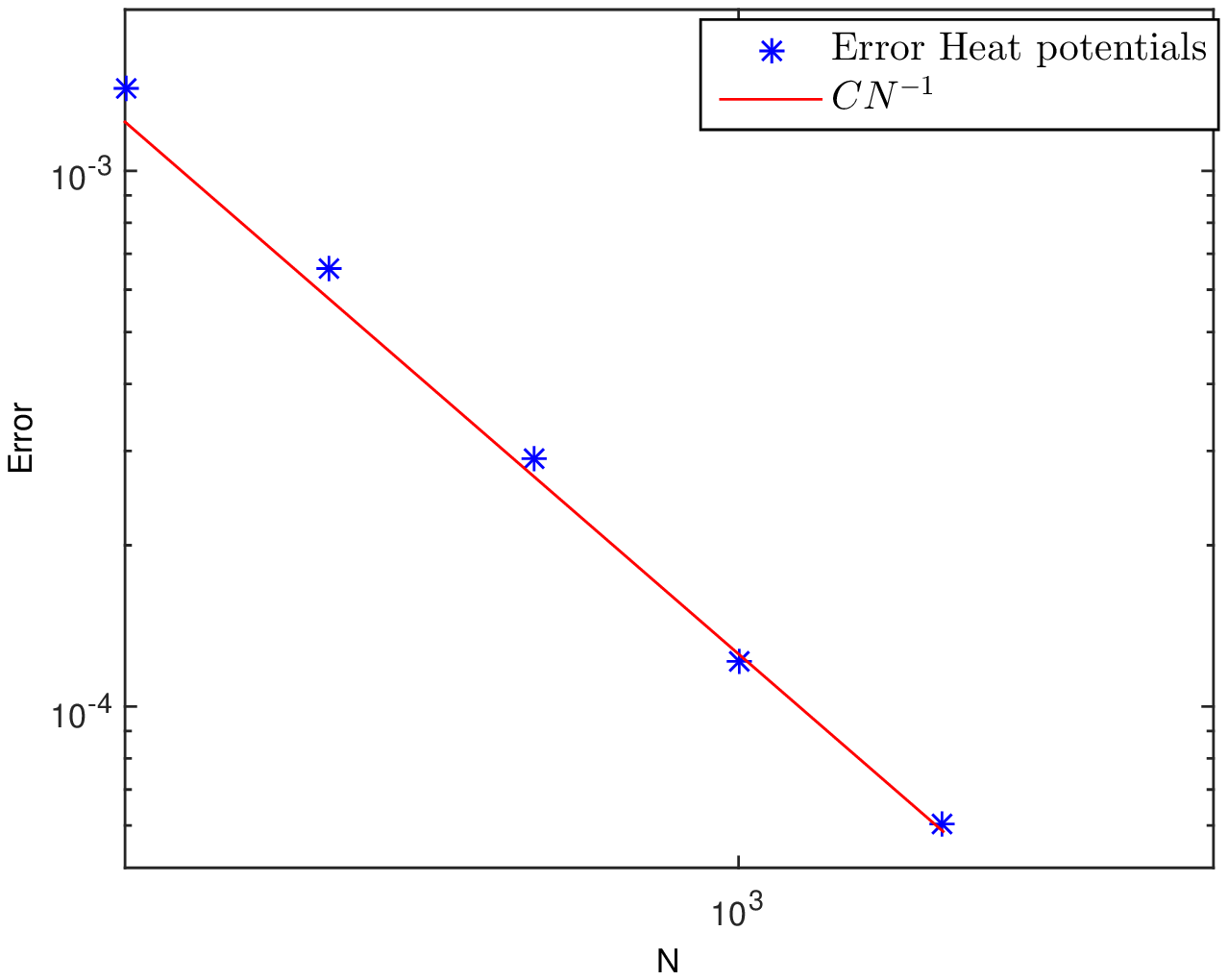}}
	\end{center}
	\vspace{-10pt}
	\caption{Error in the maximum norm for numerical method in Section \ref{numerical_solution}: (a) for $g$ compared to the exact solution for $\alpha = 0$;
	(b) for $L$ compared to the simulation solution for $\alpha = 0.5$.}
	 \label{error_plot}
\end{figure}

The complexity of our method is $O(N^2)$. Hence, in order to achieve precision $\varepsilon$, we need $O(\varepsilon^{-2})$ operations. In comparison, the particle method with Brownian bridge in \cite{kaushansky2018simulation} requires $O(\varepsilon^{-3})$ operations. The latter could be improved to $O(\varepsilon^{-2})$ by multilevel simulation,
but equally a higher order method for the Volterra equation would bring the complexity down.
Another advantage of the method above is that we automatically get directly the derivative $g$ of the loss function, which is harder to obtain in \cite{kaushansky2018simulation} because of Monte Carlo noise.

\subsection{Comparison of perturbation and numerical methods}

Here, we compare the numerical and perturbation solutions described above.
We fix $T = 1$, $z = 0.5$, and choose $N = 1000$, the number of grid points, sufficiently large so that the numerical error is negligible.
In Figure \ref{fig_num_vs_pert} we plot $g$, the hitting time density, computed with numerical and perturbation methods as well as $g_0(t)$, the solution for $\alpha = 0$, to measure the impact of the nonlinear term, for different values of $\alpha$. For $\alpha = 0.1$, the two solutions are visually indistinguishable; for $\alpha = 0.3$ there is small but visible difference between the solutions, which increases further for $\alpha = 0.5$ and arises from the higher order terms.\footnote{In our implementation of the perturbation solution, we also perform a scaling to ensure the correct cumulative density at $T$ (which in practice is unknown) to improve the results slightly.}
For $\alpha = 1$, where the numerical solution shows a jump in the loss function at around $t=0.1$, the first order expansion approximation breaks down.

\begin{figure}
	\begin{center}
		\subfloat[]{\includegraphics[width=0.5\textwidth]{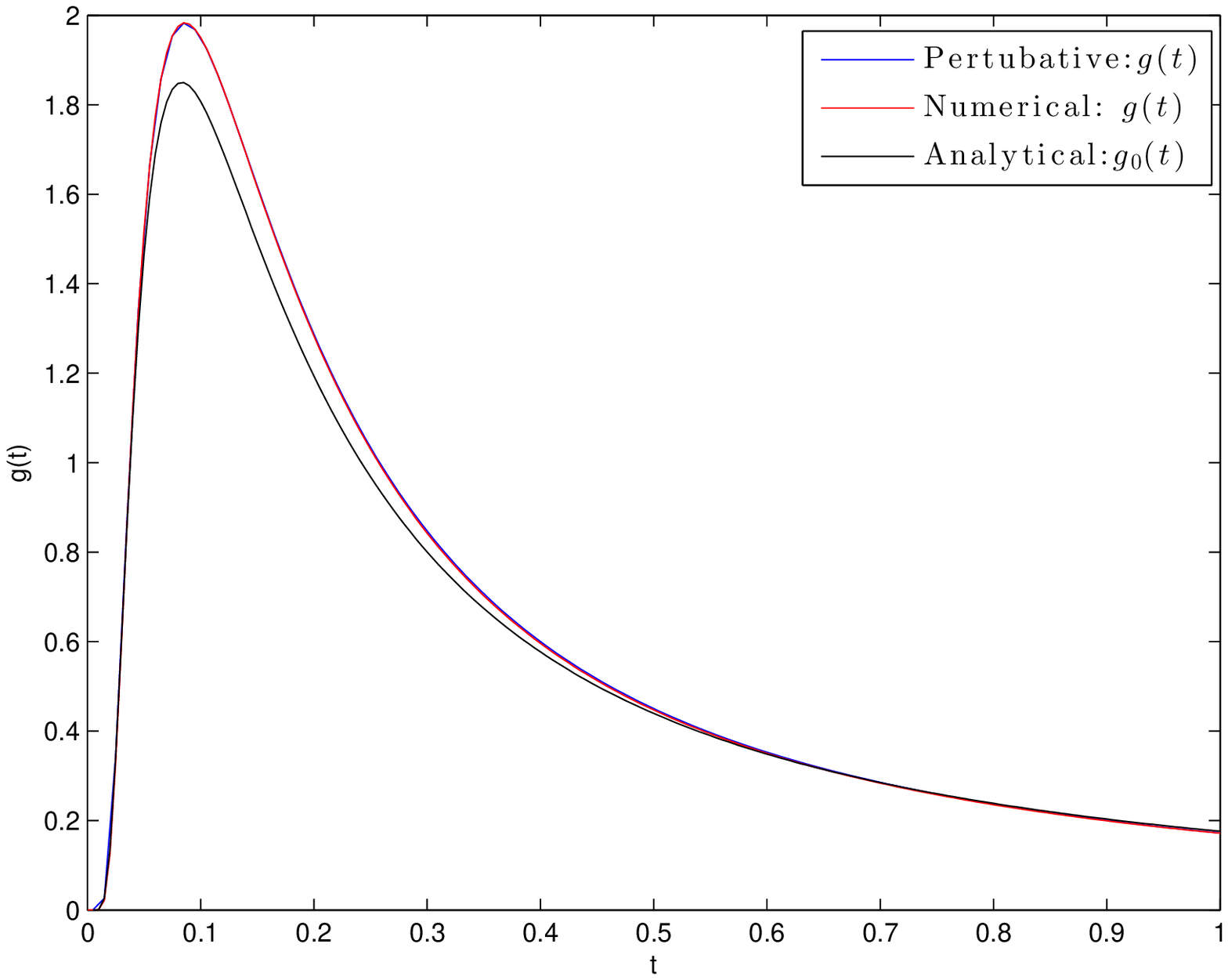}}
		\subfloat[]{\includegraphics[width=0.5\textwidth]{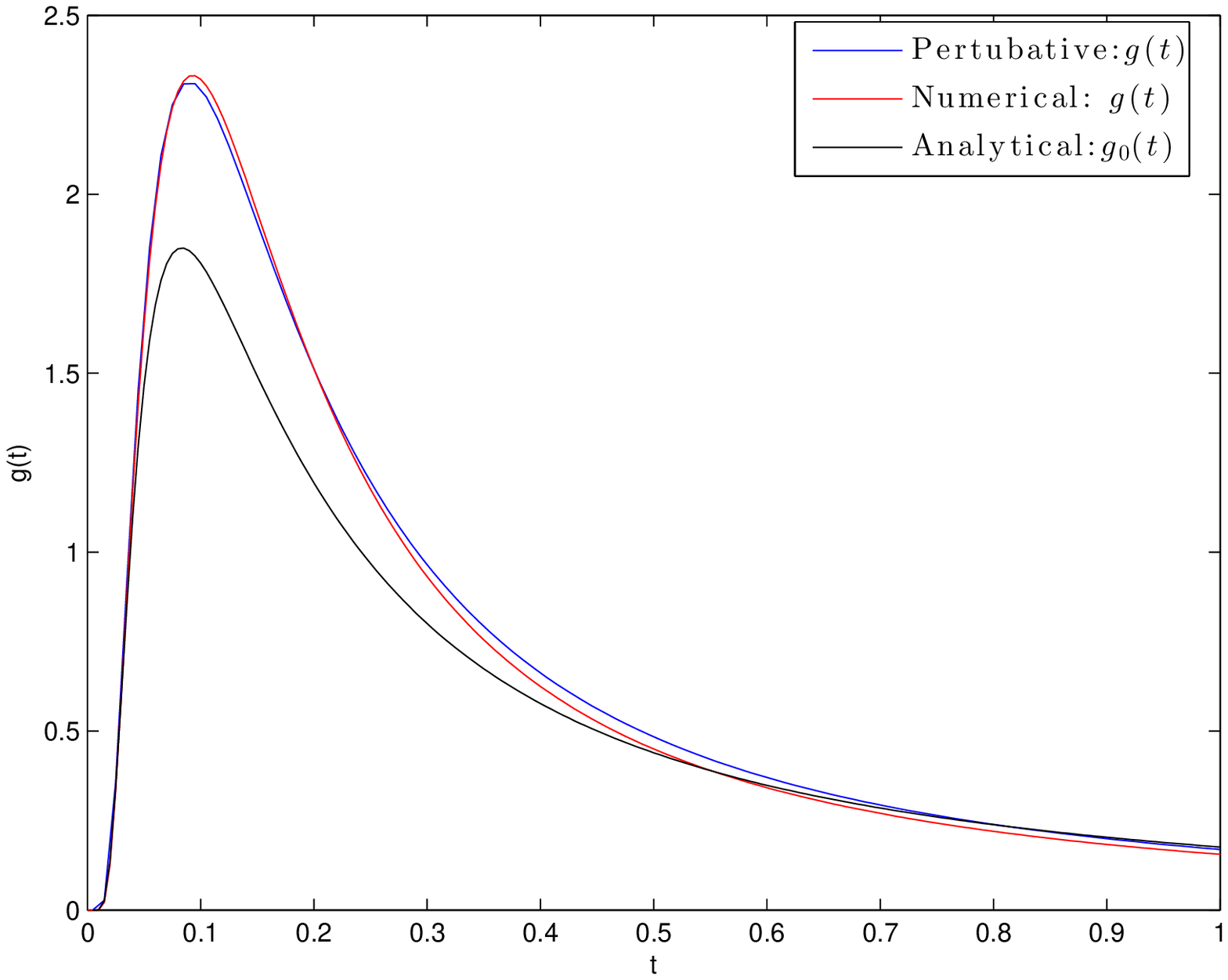}} \\
		\subfloat[]{\includegraphics[width=0.5\textwidth]{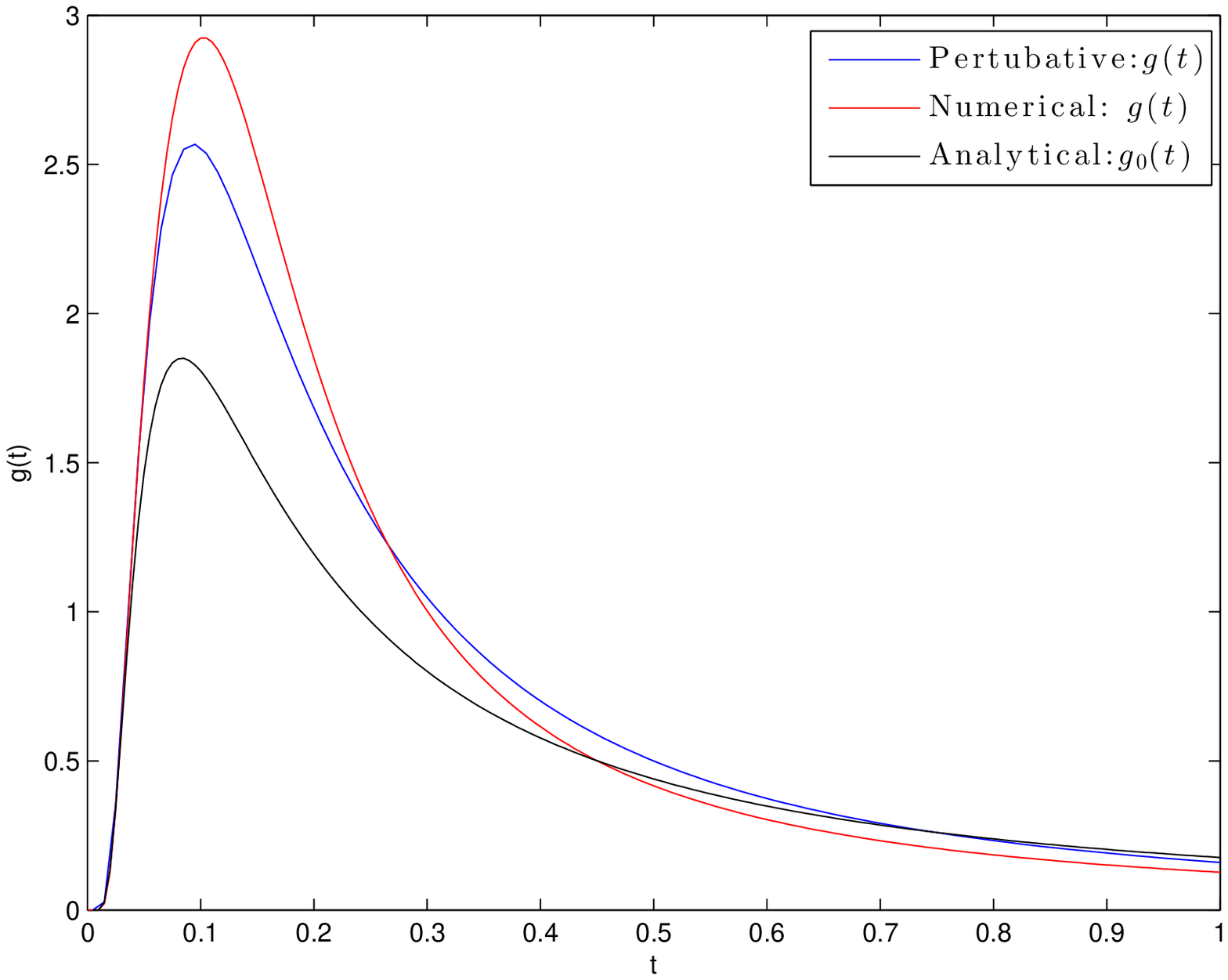}}
		\subfloat[]{\includegraphics[width=0.5\textwidth]{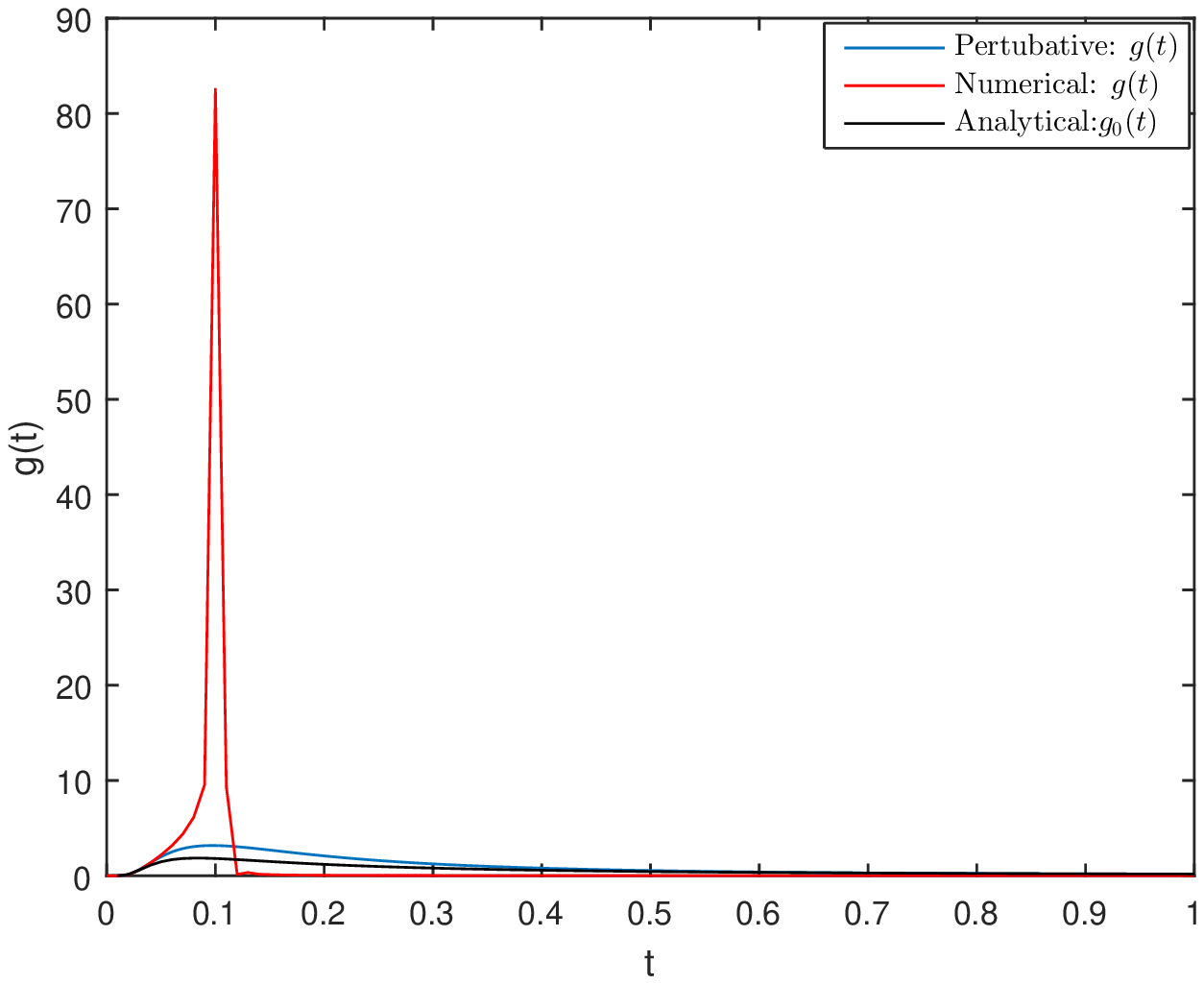}}
	\end{center}
	\vspace{-10pt}
	\caption{
	Comparison of the numerical and perturbative methods for different values of $\alpha$.: (a)  $\alpha = 0.1$;
	(b) $\alpha = 0.3 $;
	(c) $\alpha = 0.5$; (d) $\alpha = 1$. For visibility, we take $N = 100$ in the numerical method in the last plot.}
 	\label{fig_num_vs_pert}
\end{figure}

\subsection{Parameter studies}

We now assess the impact of the mean-field interaction by varying the parameter $\alpha$.

We fix $T = 1, z = 0.5$, and choose $N = 1000$, the number of the grid points.
Figure \ref{fig_alpha} demonstrates the behavior of $\nu(t)$ and $g(t)$ for different values of $\alpha$, starting with $\alpha = 0$;
for $L_t$, including a case with discontinuity, see already Figure \ref{L_t_fig}.
\begin{figure}
	\begin{center}
		\subfloat[]{\includegraphics[width=0.5\textwidth]{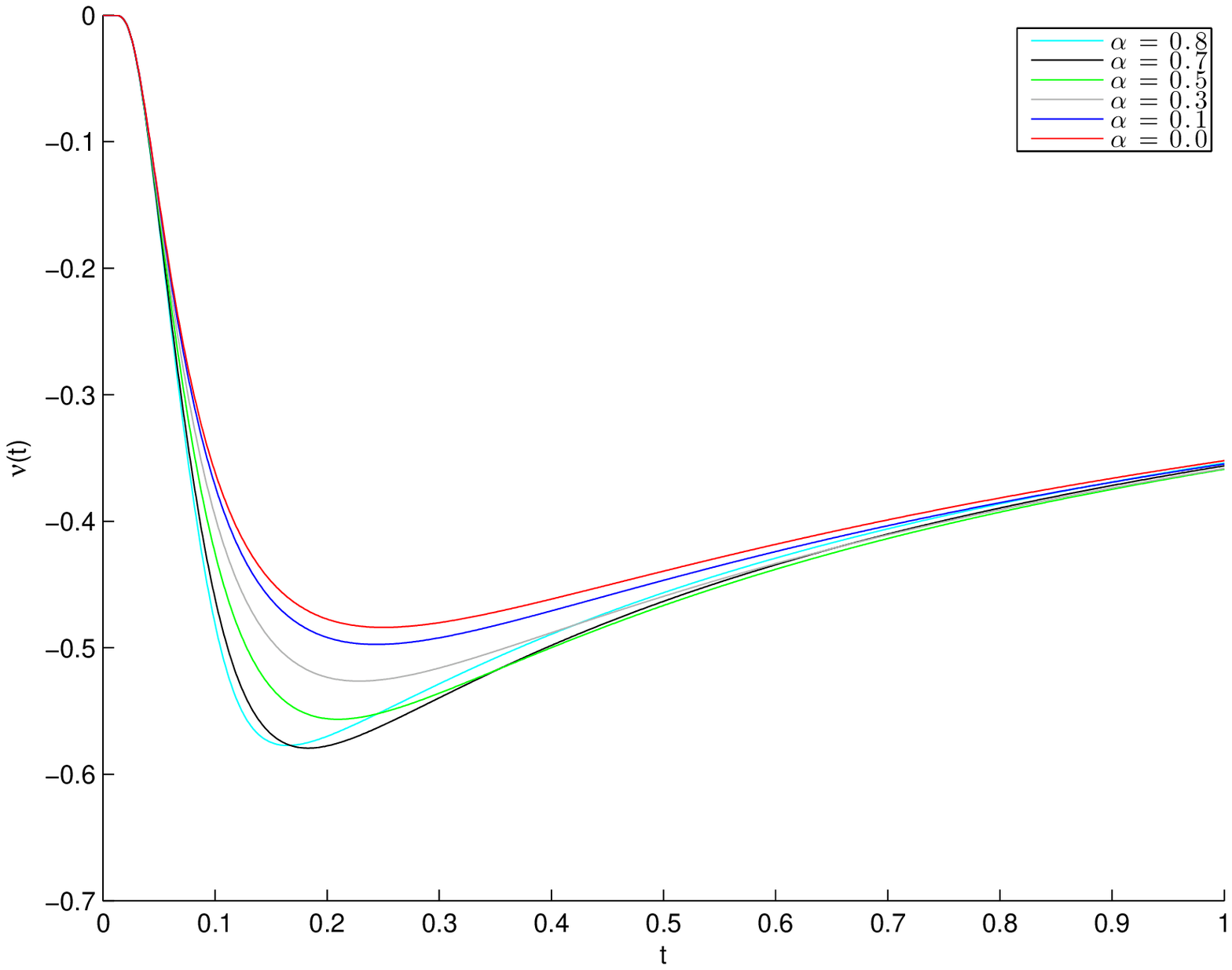}}
		\subfloat[]{\includegraphics[width=0.5\textwidth]{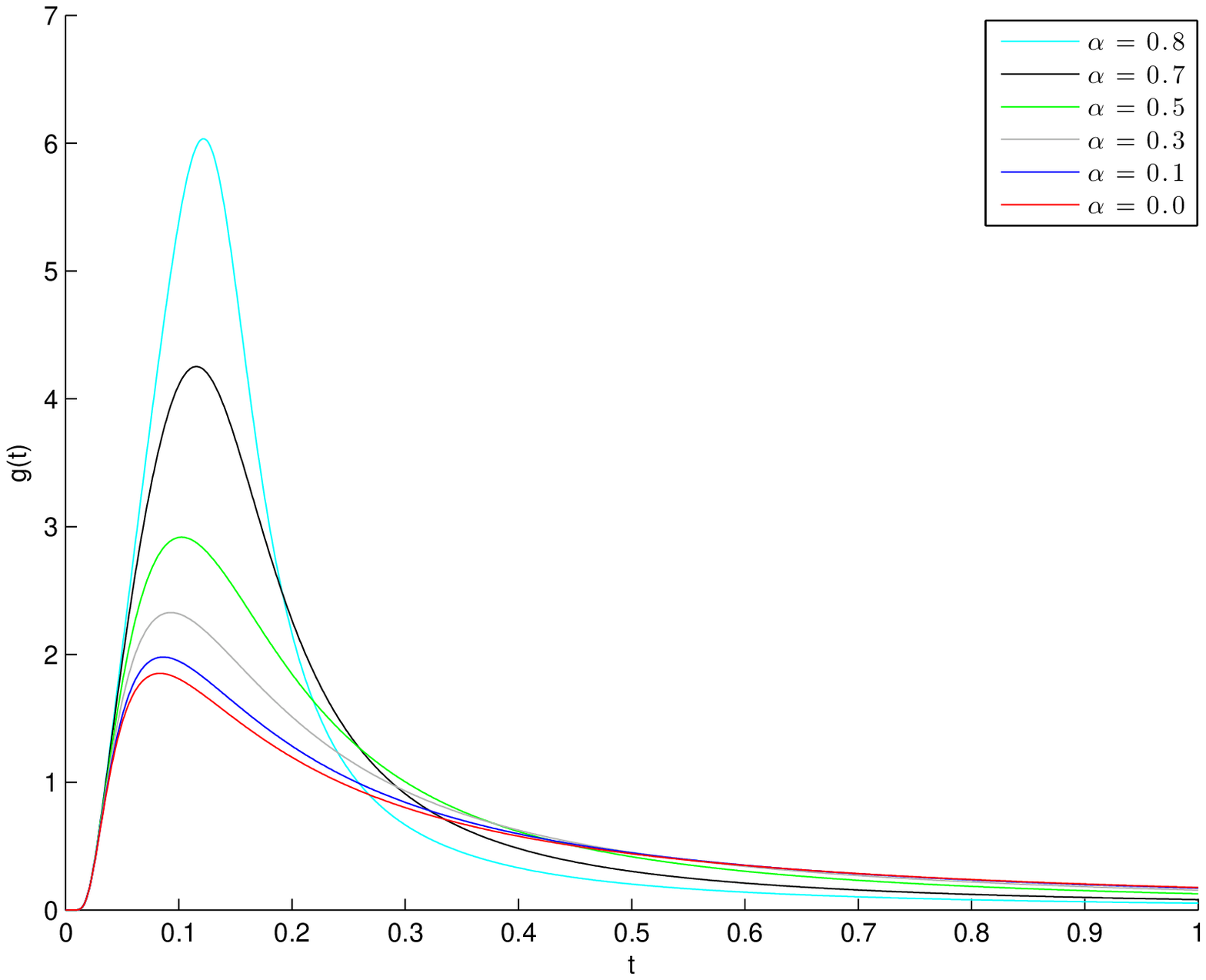}} \\
	\end{center}
	\vspace{-10pt}
	\caption{
	(a) $\nu(t)$ for different values of $\alpha$.
	(b) $g(t)$ for different values of $\alpha$.}
 	\label{fig_alpha}
\end{figure}

To illustrate the impact of the interaction further, we consider the expectation and variance of the default time depending on $\alpha$. Since the expectation is infinite, we restrict it to the interval $(0, T)$, i.e.\ consider $\mathbb{E}[\tau | \tau < T]$ and $\mathbb{V}[\tau | \tau < T]$. These expectations  must be finite for any fixed $T$, and go to infinity when $T \to \infty$. The conditional density is then $p_{\tau | \tau < T}(t) = \frac{p_{\tau}(t)}{\int_0^T p_{\tau}(s) \, ds}$ for $t \in [0, T]$. Using this, one can easily evaluate the moments numerically. We present the results in Figure \ref{fig_moments}. As expected, we observe that the expected default time and its variance become smaller with increasing of $\alpha$, and grow with increasing $T$.

\begin{figure}
	\begin{center}
		\subfloat[]{\includegraphics[width=0.5\textwidth]{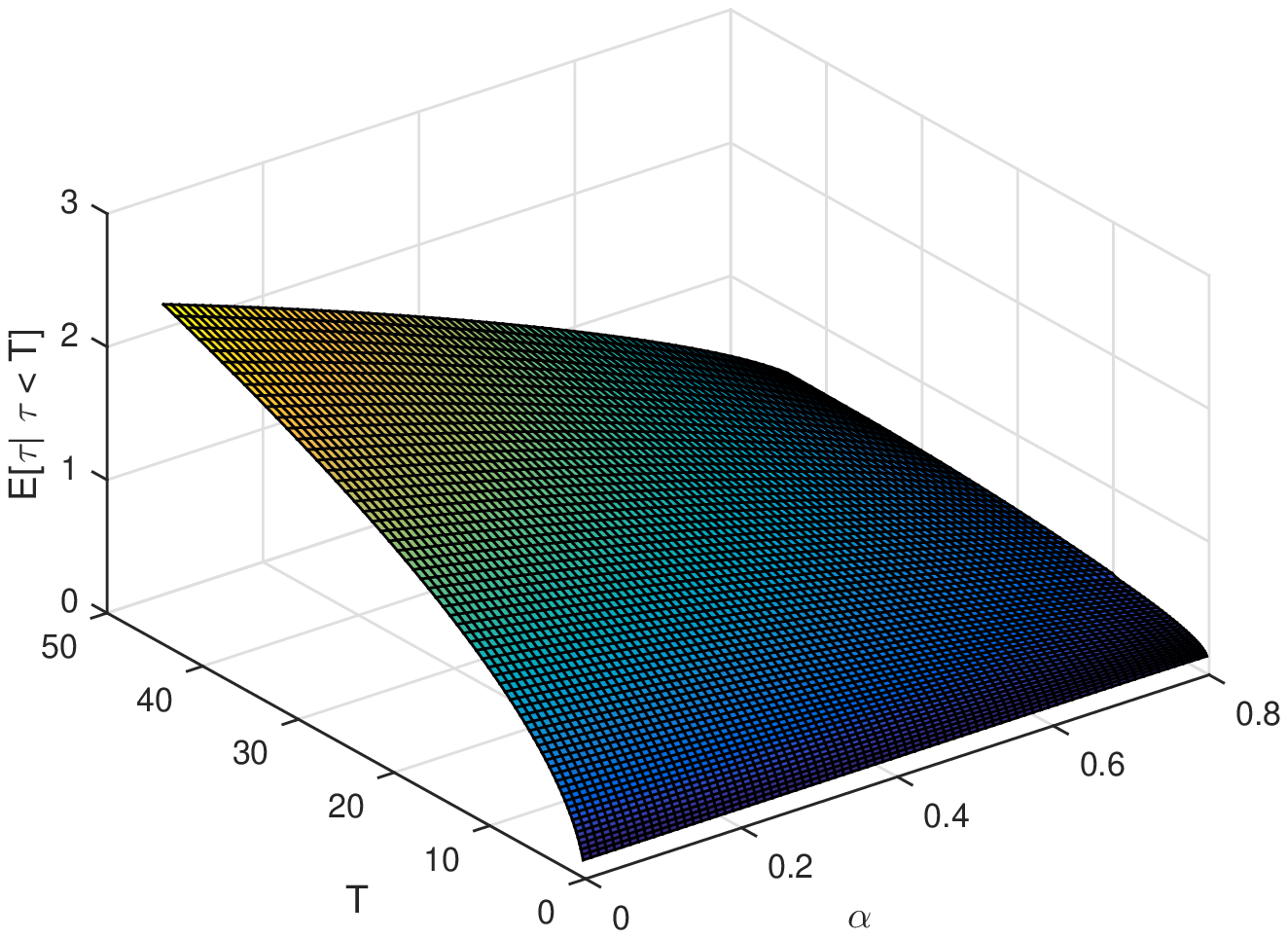}}
		\subfloat[]{\includegraphics[width=0.5\textwidth]{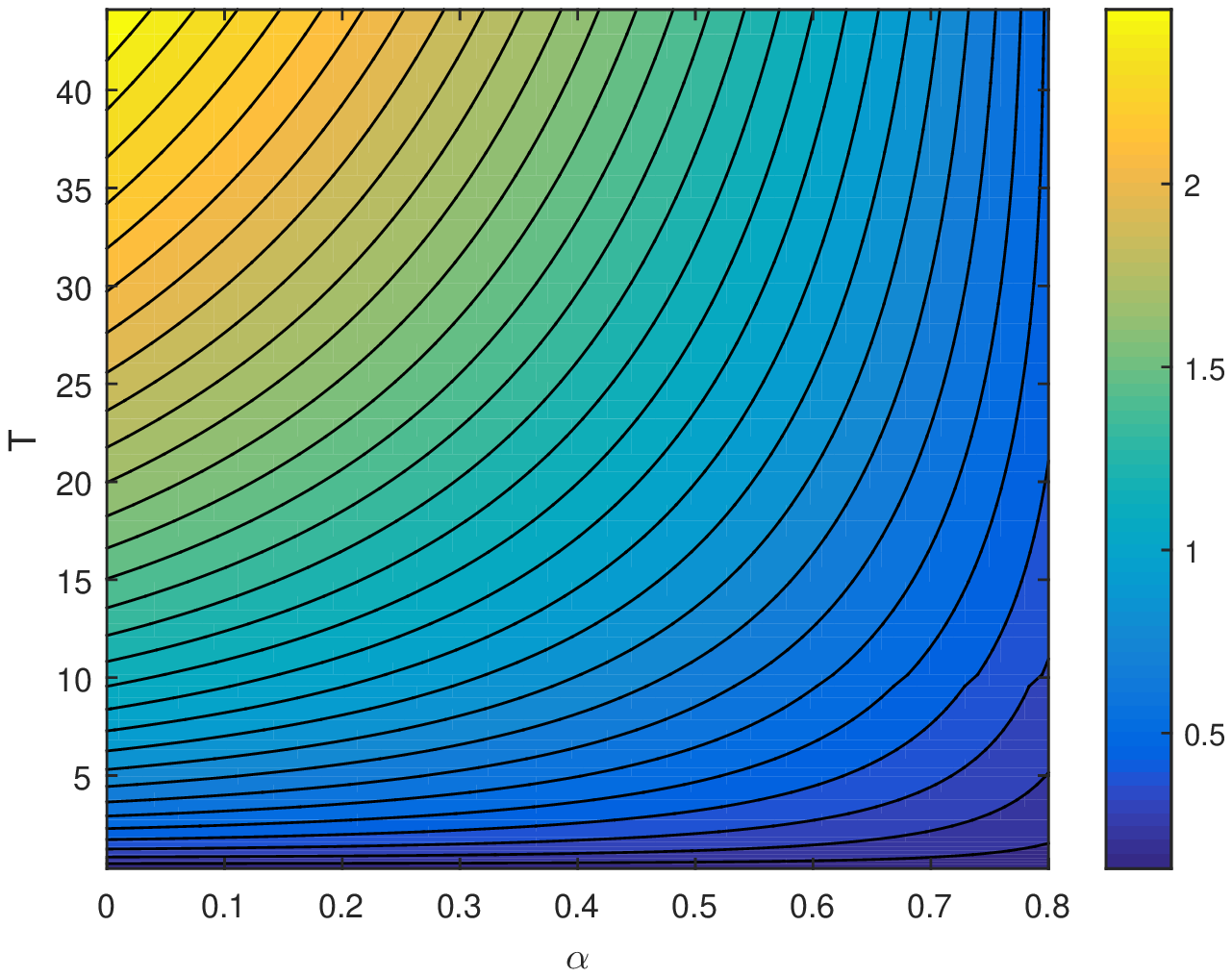}} \\
		\subfloat[]{\includegraphics[width=0.5\textwidth]{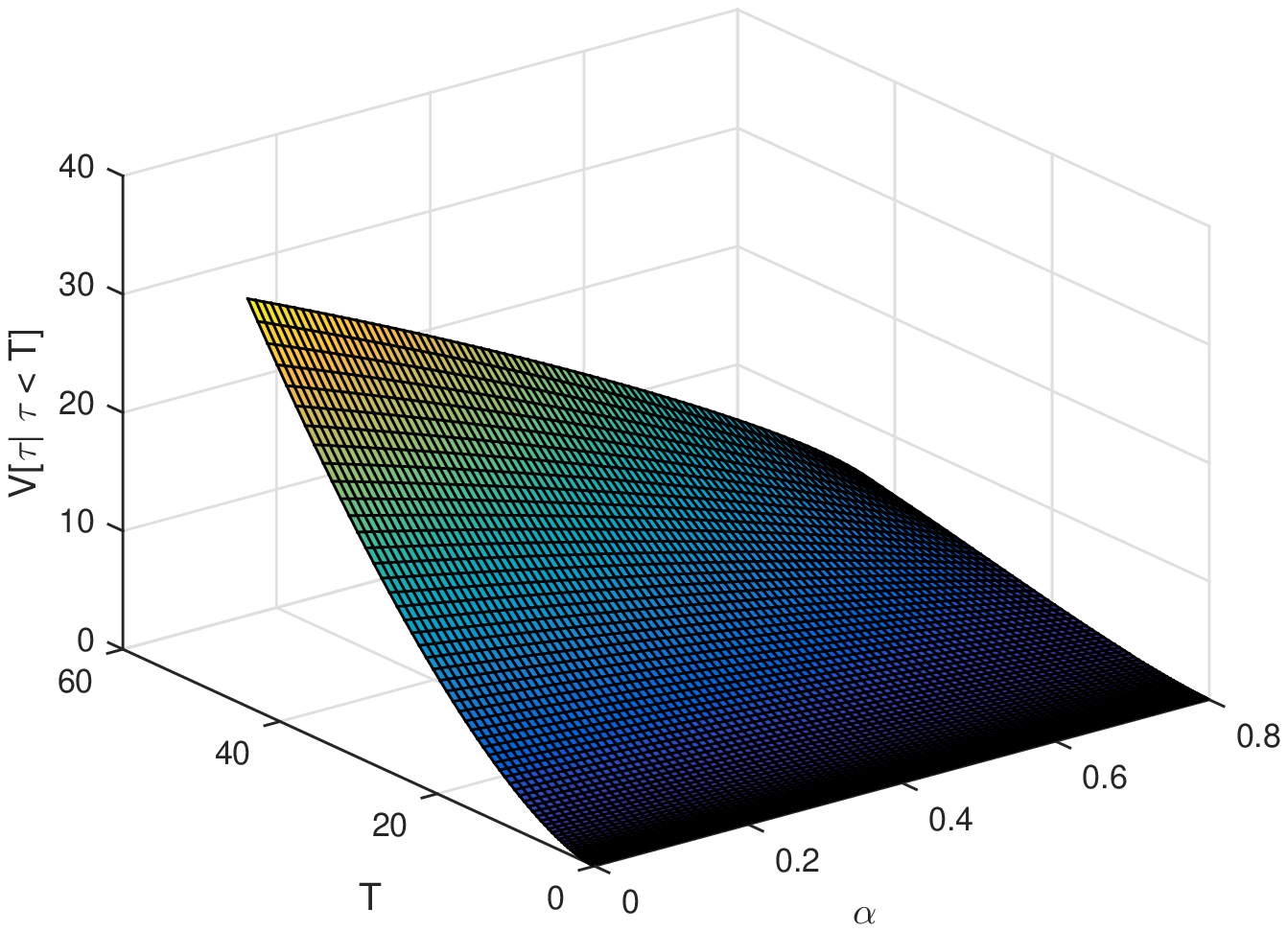}}
		\subfloat[]{\includegraphics[width=0.5\textwidth]{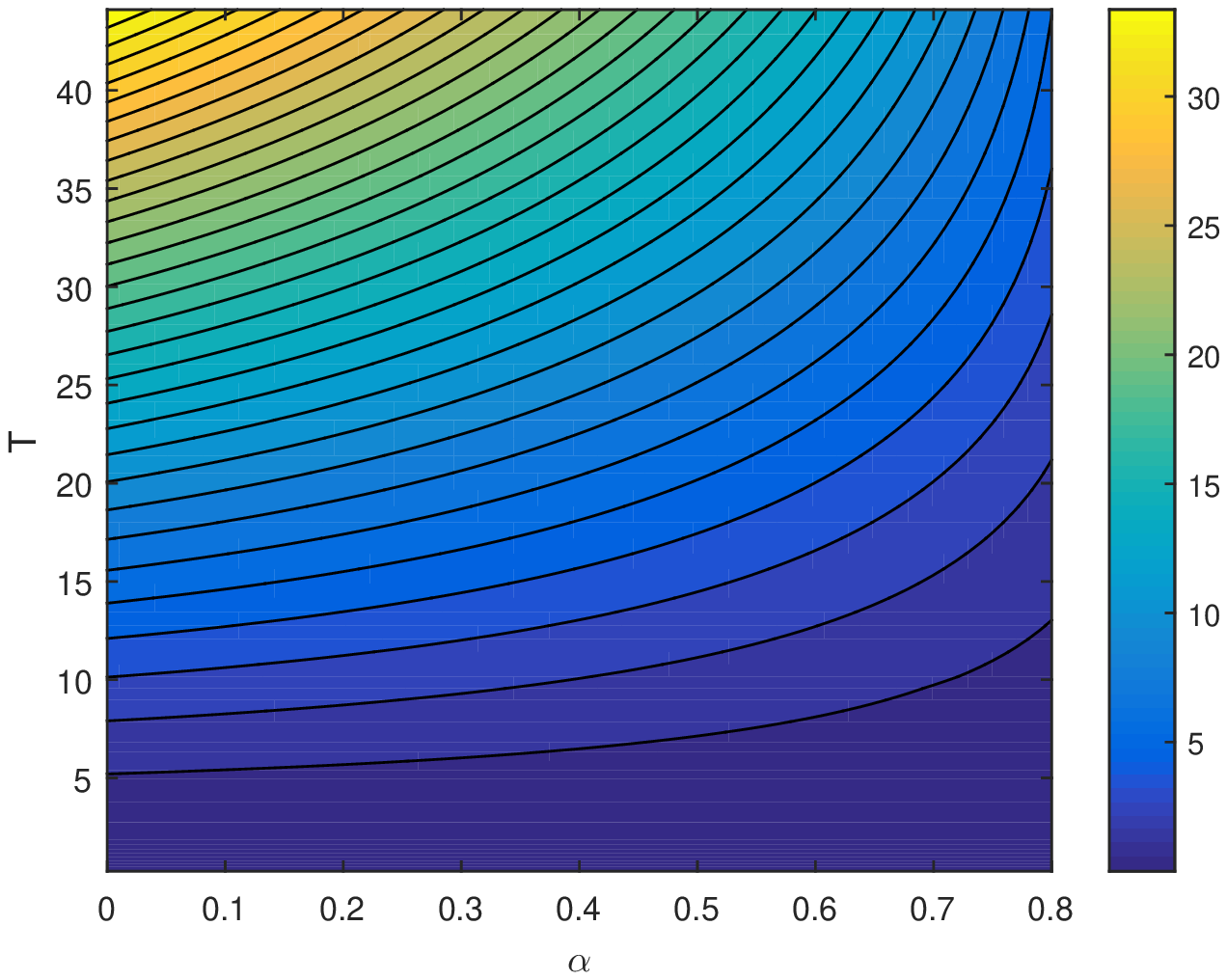}} \\
	\end{center}
	\vspace{-10pt}
	\caption{(a) and (b): $\mathbb{E}[\tau | \tau < T]$; (c) and (d) $\mathbb{V}[\tau | \tau < T]$ for different values of $\alpha$ and $T$.}
 	\label{fig_moments}
\end{figure}

\section{Conclusion}
\label{sec:conclusion}
We have developed a semi-analytical approach to finding the density of interacting particles where their common downward drift increases in magnitude when particles hit a lower boundary, thus creating a positive feedback effect. This leads to a nonlinear and nonlocal parabolic equation. Using the method of heat potentials, we derived an equivalent coupled system of Volterra integral equations and solved it numerically, or by expansion for a small interaction parameter $\alpha$. We confirmed empirically the convergence of order $1$ of the numerical method and demonstrated its better complexity in comparison to the particle method in \cite{kaushansky2018simulation}.
There are striking financial implications as the computations uncover, in a very simplified setting, how mutual liabilities accelerate defaults of individual banks.

This paper raises several open questions. The numerical method for the system of Volterra equations can be improved using the methods we described at the beginning of Section \ref{numerical_solution}; one can potentially analyse the convergence of the method for the blow-up case. Another interesting direction is to study a model with common noise as in \cite{hambly2018spde} and \cite{ledger2018mercy} using the method of heat potentials. Lastly, it would be interesting to investigate an extension of the current paper for more complicated diffusion equations such as those in \cite{Shkolnikov} and \cite{carrillo2013classical}.
\bigskip
\bigskip

{\bf Acknowledgements:} We thank Ben Hambly and Andreas Sojmark for discussions on the theoretical properties of their model and
its link to the Stefan problem.

\begin{appendices}

\section{Derivation of limits in Section \ref{sec:HP}}
\label{app:limits}

We start with 
(\ref{Eq11a}).
We split $\mathbb{L}_{1}$ into two
parts, %
\begin{equation*}
\mathbb{L}_{1}=\mathbb{L}_{1}^{\left( 1\right) }-\mathbb{L}_{1}^{\left(
2\right) }, 
\end{equation*}%
where%
\begin{equation*}
\begin{aligned}
\mathbb{L}_{1}^{\left( 1\right) }&=\underset{x\rightarrow 0}{\lim }\left(
x\int_{0}^{t}\frac{\exp \left( -\frac{\left( x-\Psi \left( t,t^{\prime
}\right) \right) ^{2}}{2\left( t-t^{\prime }\right) }\right) }{\sqrt{2\pi
\left( t-t^{\prime }\right) ^{3}}}\nu \left( t^{\prime }\right) dt^{\prime
}\right) , \\ 
\mathbb{L}_{1}^{\left( 2\right) }&=\underset{x\rightarrow 0}{\lim }%
\int_{0}^{t}\frac{\Psi \left( t,t^{\prime }\right) \exp \left( -\frac{\left(
x-\Psi \left( t,t^{\prime }\right) \right) ^{2}}{2\left( t-t^{\prime
}\right) }\right) }{\sqrt{2\pi \left( t-t^{\prime }\right) ^{3}}}\nu \left(
t^{\prime }\right) dt^{\prime }.%
\end{aligned}
\end{equation*}%
We represent $\nu \left( t^{\prime }\right) =\nu \left( t\right) +\left( \nu
\left( t^{\prime }\right) -\nu \left( t\right) \right) $, and write%
\begin{equation*}
\begin{aligned}
\mathbb{L}_{1}^{\left( 1\right) }&=\underset{x\rightarrow 0}{\lim }\left(
x\nu \left( t\right) \int_{0}^{t}\frac{\exp \left( -\frac{\left( x-\Psi
\left( t,t^{\prime }\right) \right) ^{2}}{2\left( t-t^{\prime }\right) }%
\right) }{\sqrt{2\pi \left( t-t^{\prime }\right) ^{3}}}\, dt^{\prime }\right)
\\ 
&+\underset{x\rightarrow 0}{\lim }\left( x\int_{0}^{t}\frac{\exp \left( -%
\frac{\left( x-\Psi \left( t,t^{\prime }\right) \right) ^{2}}{2\left(
t-t^{\prime }\right) }\right) }{\sqrt{2\pi \left( t-t^{\prime }\right) ^{3}}}%
\left( \nu \left( t^{\prime }\right) -\nu \left( t\right) \right) \, dt^{\prime
}\right) \\ 
&=\nu \left( t\right) \underset{x\rightarrow 0}{\lim }\left( x\int_{0}^{t}%
\frac{\exp \left( -\frac{\left( x-\Psi \left( t,t^{\prime }\right) \right)
^{2}}{2\left( t-t^{\prime }\right) }\right) }{\sqrt{2\pi \left( t-t^{\prime
}\right) ^{3}}}\, dt^{\prime }\right) ,%
\end{aligned}
\end{equation*}%
since the second integral converges. We use the change of variables 
$t-t^{\prime }=x^{2}u$,  
so that %
\[
\begin{aligned}
\underset{x\rightarrow 0}{\lim }\left( x\int_{0}^{t}\frac{\exp \left( -\frac{%
\left( x-\Psi \left( t,t^{\prime }\right) \right) ^{2}}{2\left( t-t^{\prime
}\right) }\right) }{\sqrt{2\pi \left( t-t^{\prime }\right) ^{3}}}\, dt^{\prime
}\right) 
=\underset{x\rightarrow 0}{\lim }\int_{0}^{t/x^{2}}\frac{\exp \left( -\frac{1%
}{2u}\right) }{\sqrt{2\pi u^{3}}} \, du =\int_{0}^{\infty }\frac{\exp \left( -\frac{1}{2u}\right) }{\sqrt{2\pi u^{3}}%
} \, du =1.%
\end{aligned}%
\]
Since the second integral converges, it is clear that%
\begin{equation*}
\mathbb{L}_{1}^{\left( 2\right) }=\int_{0}^{t}\frac{\Psi \left( t,t^{\prime
}\right) \Xi \left( t,t^{\prime }\right) }{\sqrt{2\pi \left( t-t^{\prime
}\right) ^{3}}}\nu \left( t^{\prime }\right) dt^{\prime }.  \label{Eq44}
\end{equation*}%
Thus, (\ref{Eq11a}) is valid.

The second limit $\mathbb{L}_{2}$ in (\ref{Eq11d}) is less standard and more difficult to
evaluate. 
We introduce a non-singular function $\phi $,%
\begin{equation*}
\begin{aligned}
\phi \left( t,t^{\prime }\right) &=\frac{\Psi \left( t,t^{\prime }\right) }{%
t-t^{\prime }}, \quad t \neq t', \qquad\quad
\phi \left( t,t\right) =M_{t}\left( t\right), %
\end{aligned}
\end{equation*}%
and write%
\begin{equation*}
\begin{aligned}
\mathbb{{L}}_{2}
&=\underset{x\rightarrow 0}{\lim }\int_{0}^{t}\left( 1-\frac{x^{2}}{\left(
t-t^{\prime }\right) }+2x\phi \left( t,t^{\prime }\right) -\phi \left(
t,t^{\prime }\right) ^{2}\left( t-t^{\prime }\right) \right) \\ 
&\times \frac{\exp \left( -\frac{x^{2}}{2\left( t-t^{\prime }\right) }+x\phi
\left( t,t^{\prime }\right) \right) \Xi \left( t,t^{\prime }\right) }{\sqrt{%
2\pi \left( t-t^{\prime }\right) ^{3}}}\nu \left( t^{\prime }\right)
\, dt^{\prime } \\ 
&=\mathbb{{L}}_{2}^{\left( 1\right) }+\mathbb{{L}}_{2}^{\left(
2\right) }+2\mathbb{{L}}_{2}^{\left( 3\right) }-\mathbb{{L}}%
_{2}^{\left( 4\right) },%
\end{aligned}
\end{equation*}%
where%
\begin{equation*}
\begin{aligned}
\mathbb{{L}}_{2}^{\left( 1\right) }&=\underset{x\rightarrow 0}{\lim }%
\, \nu \left( t\right) \int_{0}^{t}\left( 1-\frac{x^{2}}{\left( t-t^{\prime
}\right) }\right) \frac{\exp \left( -\frac{x^{2}}{2\left( t-t^{\prime
}\right) }+2x\phi \left( t,t^{\prime }\right) \right) }{\sqrt{2\pi \left(
t-t^{\prime }\right) ^{3}}} \, dt^{\prime }, \\ 
\mathbb{{L}}_{2}^{\left( 2\right) }&=\underset{x\rightarrow 0}{\lim }%
\int_{0}^{t}\left( 1-\frac{x^{2}}{\left( t-t^{\prime }\right) }\right) \frac{%
\exp \left( -\frac{x^{2}}{2\left( t-t^{\prime }\right) }+2x\phi \left(
t,t^{\prime }\right) \right) }{\sqrt{2\pi \left( t-t^{\prime }\right) ^{3}}}%
\left( \Xi \left( t,t^{\prime }\right) \nu \left( t^{\prime }\right) -\nu
\left( t\right) \right) \, dt^{\prime }, \\ 
\mathbb{{L}}_{2}^{\left( 3\right) }&=\underset{x\rightarrow 0}{\lim }%
\, x\int_{0}^{t}\phi \left( t,t^{\prime }\right) \frac{\exp \left( -\frac{x^{2}%
}{2\left( t-t^{\prime }\right) }+2x\phi \left( t,t^{\prime }\right) \right)
\Xi \left( t,t^{\prime }\right) }{\sqrt{2\pi \left( t-t^{\prime }\right) ^{3}%
}}\nu \left( t^{\prime }\right) \, dt^{\prime }, \\ 
\mathbb{{L}}_{2}^{\left( 4\right) }&=\underset{x\rightarrow 0}{\lim }%
\int_{0}^{t}\phi \left( t,t^{\prime }\right) ^{2}\frac{\exp \left( -\frac{%
x^{2}}{2\left( t-t^{\prime }\right) }+x\phi \left( t,t^{\prime }\right)
\right) \Xi \left( t,t^{\prime }\right) }{\sqrt{2\pi \left( t-t^{\prime
}\right) }}\nu \left( t^{\prime }\right) \, dt^{\prime }.%
\end{aligned}
\end{equation*}%
We have%
\begin{equation*}
\begin{aligned}
\mathbb{{L}}_{2}^{\left( 1\right) }&=\underset{x\rightarrow 0}{\lim }%
\, \nu \left( t\right) \int_{0}^{t}\left( 1-\frac{x^{2}}{\left( t-t^{\prime
}\right) }\right) \frac{\exp \left( -\frac{x^{2}}{2\left( t-t^{\prime
}\right) }+2x\phi \left( t,t^{\prime }\right) \right) }{\sqrt{2\pi \left(
t-t^{\prime }\right) ^{3}}} \, dt^{\prime } \\ 
&=\nu \left( t\right) \underset{x\rightarrow 0}{\lim }\frac{1}{x}%
\int_{0}^{t/x^{2}}\left( 1-\frac{1}{u}\right) \frac{\exp \left( -\frac{1}{2u}%
\right) }{\sqrt{2\pi u^{3}}}\, du \\ 
&=2\nu \left( t\right) \underset{x\rightarrow 0}{\lim }\frac{1}{x}\int_{x/%
\sqrt{t}}^{\infty }\left( 1-v^{2}\right) \frac{\exp \left( -\frac{v^{2}}{2}%
\right) }{\sqrt{2\pi }} \, dv \\ 
&=-2\nu \left( t\right) \underset{x\rightarrow 0}{\lim }\frac{1}{x}%
\int_{0}^{x/\sqrt{t}}\left( 1-v^{2}\right) \frac{\exp \left( -\frac{v^{2}}{2}%
\right) }{\sqrt{2\pi }} \, dv \\ 
&=-\frac{2}{\sqrt{2\pi t}}\nu \left( t\right),%
\end{aligned}
\end{equation*}%
where $\left( t-t^{\prime }\right) =u$, $u=1/v^{2}$ and we have used the fact
that%
\begin{equation*}
\int_{0}^{\infty }\left( 1-v^{2}\right) \frac{\exp \left( -\frac{v^{2}}{2}%
\right) }{\sqrt{2\pi }} \, dv=0.%
\end{equation*}
Further,%
\begin{equation*}
\begin{aligned}
\mathbb{{L}}_{2}^{\left( 2\right) }&=\underset{x\rightarrow 0}{\lim }%
\int_{0}^{t}\left( 1-\frac{x^{2}}{\left( t-t^{\prime }\right) }\right) \frac{%
\exp \left( -\frac{x^{2}}{2\left( t-t^{\prime }\right) }+2x\phi \left(
t,t^{\prime }\right) \right) }{\sqrt{2\pi \left( t-t^{\prime }\right) ^{3}}}%
\left( \Xi \left( t,t^{\prime }\right) \nu \left( t^{\prime }\right) -\nu
\left( t\right) \right) \, dt^{\prime } \\ 
&=\underset{x\rightarrow 0}{\lim }\int_{0}^{t}\frac{\exp \left( -\frac{x^{2}}{%
2\left( t-t^{\prime }\right) }+2x\phi \left( t,t^{\prime }\right) \right) }{%
\sqrt{2\pi \left( t-t^{\prime }\right) ^{3}}}\left( \Xi \left( t,t^{\prime
}\right) \nu \left( t^{\prime }\right) -\nu \left( t\right) \right) \,
dt^{\prime } \\ 
&=\int_{0}^{t}\frac{\left( \Xi \left( t,t^{\prime }\right) \nu \left(
t^{\prime }\right) -\nu \left( t\right) \right) }{\sqrt{2\pi \left(
t-t^{\prime }\right) ^{3}}} \, dt^{\prime },%
\end{aligned}
\end{equation*}%
where we have dropped the higher order $x^2$ term in the integral 
in the second line,
\begin{equation*}
\begin{aligned}
\mathbb{{L}}_{2}^{\left( 3\right) }&=\underset{x\rightarrow 0}{\lim }%
x\int_{0}^{t}\phi \left( t,t^{\prime }\right) \frac{\exp \left( -\frac{x^{2}%
}{2\left( t-t^{\prime }\right) }+2x\phi \left( t,t^{\prime }\right) \right) 
}{\sqrt{2\pi \left( t-t^{\prime }\right) ^{3}}}\Xi \left( t,t^{\prime
}\right) \nu \left( t^{\prime }\right) \, dt^{\prime } \\ 
&=\phi \left( t,t\right) \Xi \left( t,t\right) \nu \left( t\right) =M_{t}\left( t\right) \nu \left( t\right) ,%
\end{aligned}
\end{equation*}%
and
\begin{equation*}
\begin{aligned}
\mathbb{{L}}_{2}^{\left( 4\right) }&=\underset{x\rightarrow 0}{\lim }%
\int_{0}^{t}\phi \left( t,t^{\prime }\right) ^{2}\frac{\exp \left( -\frac{%
x^{2}}{2\left( t-t^{\prime }\right) }+x\phi \left( t,t^{\prime }\right)
\right) }{\sqrt{2\pi \left( t-t^{\prime }\right) }}\Xi \left( t,t^{\prime
}\right) \nu \left( t^{\prime }\right) \, dt^{\prime } \\ 
&=\int_{0}^{t}\phi \left( t,t^{\prime }\right) ^{2}\frac{\Xi \left(
t,t^{\prime }\right) }{\sqrt{2\pi \left( t-t^{\prime }\right) }}\nu \left(
t^{\prime }\right) \, dt^{\prime } =\int_{0}^{t}\frac{\Psi \left( t,t^{\prime }\right) ^{2}}{\left( t-t^{\prime
}\right) }\frac{\Xi \left( t,t^{\prime }\right) }{\sqrt{2\pi \left(
t-t^{\prime }\right) ^{3}}}\nu \left( t^{\prime }\right) \, dt^{\prime }.%
\end{aligned}
\end{equation*}

Finally,%
\begin{equation*}
\begin{aligned}
\mathbb{{L}}_{2}&=\mathbb{{L}}_{2}^{\left( 1\right) }+\mathbb{%
{L}}_{2}^{\left( 2\right) }+2\mathbb{{L}}_{2}^{\left( 3\right) }-%
\mathbb{{L}}_{2}^{\left( 4\right) } \\ 
&=-2\frac{1}{\sqrt{2\pi t}}\nu \left( t\right) +\int_{0}^{t}\frac{\left( \Xi
\left( t,t^{\prime }\right) \nu \left( t^{\prime }\right) -\nu \left(
t\right) \right) }{\sqrt{2\pi \left( t-t^{\prime }\right) ^{3}}} \, dt^{\prime }
\\ 
&+2M_{t}\left( t\right) \nu \left( t\right) -\int_{0}^{t}\frac{\Psi \left(
t,t^{\prime }\right) ^{2}}{\left( t-t^{\prime }\right) }\frac{\Xi \left(
t,t^{\prime }\right) }{\sqrt{2\pi \left( t-t^{\prime }\right) ^{3}}}\nu
\left( t^{\prime }\right) \, dt^{\prime } \\ 
&=2\left( M_{t}\left( t\right) -\frac{1}{\sqrt{2\pi t}}\right) \nu \left(
t\right) +\int_{0}^{t}\frac{\left( \left( 1-\frac{\Psi \left( t,t^{\prime
}\right) ^{2}}{\left( t-t^{\prime }\right) }\right) \Xi \left( t,t^{\prime
}\right) \nu \left( t^{\prime }\right) -\nu \left( t\right) \right) }{\sqrt{%
2\pi \left( t-t^{\prime }\right) ^{3}}} \, dt^{\prime },%
\end{aligned}
\end{equation*}%
as stated.

\section{Special cases}
\label{app_special_cases}

For illustration, we work out the solution from the formula obtained in Section \ref{sec:HP} for two special cases which are also accessible by
the standard reflection principle for Brownian motion or method of images for parabolic equations.

\subsection{$M(t) = 0$}
When $M\left( t\right) =0$, we get%
\begin{equation*}
\left\{ 
\begin{aligned}
& \nu \left( t\right) +\frac{\exp \left( -\frac{z^{2}}{2t}\right) }{\sqrt{2\pi
t}}=0, \\ 
& g\left( t\right) +\frac{1}{\sqrt{2\pi t}}\nu \left( t\right) +\frac{1}{2}%
\int_{0}^{t}\frac{\left( \nu \left( t\right) -\nu \left( t^{\prime }\right)
\right) }{\sqrt{2\pi \left( t-t^{\prime }\right) ^{3}}} \, dt^{\prime }-\frac{%
z\exp \left( -\frac{z^{2}}{2t}\right) }{2\sqrt{2\pi t^{3}}}=0.%
\end{aligned}%
\right.  
\end{equation*}%
Integration by parts of the second equation and use of the first yields 
\begin{equation*}
\begin{aligned}
g\left( t\right)&=-\int_{0}^{t}\frac{\overset{\cdot }{\nu }\left( t^{\prime
}\right) }{\sqrt{2\pi \left( t-t^{\prime }\right) }}\, dt^{\prime }+\frac{z\exp
\left( -\frac{z^{2}}{2t}\right) }{2\sqrt{2\pi t^{3}}} \\ 
&=\frac{z\exp \left( -\frac{z^{2}}{2t}\right) }{2\sqrt{2\pi t^{3}}}+\frac{%
z\exp \left( -\frac{z^{2}}{2t}\right) }{2\sqrt{2\pi t^{3}}} \\ 
&=\frac{z\exp \left( -\frac{z^{2}}{2t}\right) }{\sqrt{2\pi t^{3}}},%
\end{aligned}
\label{Eq15a}
\end{equation*}%
as expected.
\subsection{$M(t) = \mu t$}
When $M\left( t\right) =\mu t$, we get
\begin{equation}
\left\{
\begin{aligned}
	&\nu(t) - \mu \int_0^t \frac{\exp \left( - \frac{\mu^2(t - t')}{2} \right)}{\sqrt{2 \pi (t - t')}} \nu(t') d t' + \frac{\exp \left( - \frac{(\mu t + z)^2}{2 t} \right)}{\sqrt{2 \pi t}} = 0, \\
	&g\left( t\right) = \frac{1}{2 \sqrt{2 \pi}} \int_{0}^{t}   \frac{1}{\sqrt{(t - t')^3}} \left( \exp \left( - \frac{\mu^2(t - t')}{2} \right)\nu \left( t^{\prime }\right) - \nu(t) \right) d t' \\
	 &-\frac{\mu^2}{2\sqrt{2 \pi}} \int_{0}^{t}  \frac{ 1}{\sqrt{t - t'}} \exp \left( - \frac{\mu^2(t - t')}{2} \right)  \nu \left( t^{\prime }\right) d t' + \left( \mu - \frac{1}{\sqrt{2 \pi t}} \right) \nu(t) \\
	&+\frac{\left( \mu t +z\right) 
	\exp \left( -\frac{\left( \mu t +z\right) ^{2}%
	}{2t}\right) }{\sqrt{2\pi t^3}}.
\end{aligned}
\right.
\label{eq_mt}
\end{equation}
Taking the Laplace transform of the first equation in \eqref{eq_mt}, we have
\begin{equation*}
	\hat{\nu}(s) - \mu \hat{\nu}(s) \frac{1}{\sqrt{2 s + \mu^2}} + \frac{\exp(-z \sqrt{2 s + \mu^2} - \mu z) }{\sqrt{2 s + \mu^2}} = 0.
\end{equation*}
Hence,
\begin{equation*}
	\hat{\nu}(s) = -  \frac{\exp(-z \sqrt{2 s + \mu^2} - \mu z) }{\sqrt{2 s + \mu^2} - \mu}.
\end{equation*}
The inverse Laplace transform of $\hat{\nu}(s)$ can be found analytically, but we do not need it to compute $g(t)$. Consider the second equation of \eqref{eq_mt}. The first integral can be rewritten as
\begin{equation*}
\begin{aligned}
	&\frac{1}{2 \sqrt{2 \pi}} \int_{0}^{t}   \frac{1}{\sqrt{(t - t')^3}} \left( \exp \left( - \frac{\mu^2(t - t')}{2} \right)\nu \left( t^{\prime }\right) - \nu(t) \right) d t' \\ 
	&= \frac{1}{ \sqrt{2 \pi}} \int_{0}^{t}  \left(\exp \left( - \frac{\mu^2(t - t')}{2} \right)\nu \left( t^{\prime }\right)  - \nu(t) \right) d \left( \frac{1}{\sqrt{(t - t')}} \right) \\
	&= -\frac{1}{ \sqrt{2 \pi t}}   \left(\exp \left( - \frac{\mu^2t}{2} \right)\nu \left( 0\right)  - \nu(t) \right) - \frac{1}{ \sqrt{2 \pi}} \int_{0}^{t} \frac{1}{\sqrt{t - t'}}   \exp \left( - \frac{\mu^2(t - t')}{2} \right)\ \dot{\nu} \left( t^{\prime }\right) d t' \\
	&-\frac{\mu^2}{ 2\sqrt{2 \pi}} \int_{0}^{t}  \frac{1}{\sqrt{(t - t')}}   \exp \left( - \frac{\mu^2(t - t')}{2} \right)\nu \left( t^{\prime }\right) d t'.
\end{aligned}
\end{equation*}
Thus,
\begin{equation*}
\begin{aligned}
	g(t) &= \mu \nu(t) -\frac{1}{ \sqrt{2 \pi t}} \exp \left( - \frac{\mu^2t}{2} \right)\nu \left( 0\right) - \frac{1}{ \sqrt{2 \pi}} \int_{0}^{t} \frac{1}{\sqrt{t - t'}}   \exp \left( - \frac{\mu^2(t - t')}{2} \right)\ \dot{\nu} \left( t^{\prime }\right) d t' \\
	&-\frac{\mu^2}{ \sqrt{2 \pi}} \int_{0}^{t}  \frac{1}{\sqrt{(t - t')}}   \exp \left( - \frac{\mu^2(t - t')}{2} \right)\nu \left( t^{\prime }\right) d t'
+\frac{\left( \mu t +z\right) 
\exp \left( -\frac{\left( \mu t +z\right) ^{2}%
}{2t}\right) }{2 \sqrt{2\pi t^3}}.
\end{aligned}
\end{equation*}
Taking Laplace transform of the last equation, we get
\begin{equation*}
\begin{aligned}
	\hat{g}(s) &= \mu \hat{\nu}(s) - \frac{1}{\sqrt{2s + \mu^2}} \nu(0) - \frac{1}{\sqrt{2s + \mu^2}}  \left( s \hat{\nu}(s) - \nu(0) \right) - \frac{\mu^2}{\sqrt{2s + \mu^2}} \hat{\nu}(s) \\
	&+ \left( \frac{1}{2} + \frac{\mu}{ 2\sqrt{2s + \mu^2}} \right) \exp(-z \sqrt{2 s + \mu^2} + \mu z) \\
	&= \left(-\frac{\mu}{\sqrt{2s + \mu^2} - \mu} + \frac{s + \mu^2}{\sqrt{2s + \mu^2}} \frac{1 }{\sqrt{2 s + \mu^2} - \mu^2} \right.\\
	& \left. + \frac{1}{2} +  \frac{\mu}{2 \sqrt{2s + \mu^2}} \right) \exp(-z \sqrt{2 s + \mu^2} + \mu z) =  \exp(-z \sqrt{2 s + \mu^2} + \mu z).
	\end{aligned}
\end{equation*}
The inverse Laplace transform yields the final expression for $g(t)$
\begin{equation*}
	g(t) = \frac{z \exp \left( - \frac{(z - \mu t)^2}{2 t} \right)}{\sqrt{2 \pi t^3}},
\end{equation*}
as expected.

\section{Proof of Lemma \ref{lemma1}}
\label{app:prooflemma}

\begin{proof}
[Proof of Lemma \ref{lemma1}]
We start with the first term and judiciously use integration by parts several times to get
\begin{equation*}
\begin{aligned}
&\frac{d}{dt}\int_{0}^{t}\frac{\Xi \left( t,t^{\prime }\right) \nu \left(
t^{\prime }\right) }{\sqrt{2\pi \left( t-t^{\prime }\right) }} \, dt^{\prime } \\
&=-2\frac{d}{dt}\int_{0}^{t}\Xi \left( t,t^{\prime }\right) \nu \left(
t^{\prime }\right) d\sqrt{\frac{\left( t-t^{\prime }\right) }{2\pi }} \\
&=2\frac{d}{dt}\left( \Xi \left( t,0\right) \nu \left( 0\right) \sqrt{\frac{%
t}{2\pi }}+\int_{0}^{t}\sqrt{\frac{\left( t-t^{\prime }\right) }{2\pi }}%
d\left( \Xi \left( t,t^{\prime }\right) \nu \left( t^{\prime }\right)
\right) \right)  \\
&=\frac{\left( 2t\Xi _{t}\left( t,0\right) +\Xi \left( t,0\right) \right)
\nu \left( 0\right) }{\sqrt{2\pi t}}+\int_{0}^{t}\frac{1}{\sqrt{2\pi \left(
t-t^{\prime }\right) }}d\left( \Xi \left( t,t^{\prime }\right) \nu \left(
t^{\prime }\right) -\nu \left( t\right) \right)  \\
&+2\int_{0}^{t}\sqrt{\frac{\left( t-t^{\prime }\right) }{2\pi }}d\left( \Xi
_{t}\left( t,t^{\prime }\right) \nu \left( t^{\prime }\right) \right)  \\
&=\frac{\left( 2t\Xi _{t}\left( t,0\right) +\Xi \left( t,0\right) \right)
\nu \left( 0\right) }{\sqrt{2\pi t}}-\frac{\Xi \left( t,0\right) \nu \left(
0\right) -\nu \left( t\right) }{\sqrt{2\pi t}}-\frac{1}{2}\int_{0}^{t}\frac{%
\Xi \left( t,t^{\prime }\right) \nu \left( t^{\prime }\right) -\nu \left(
t\right) }{\sqrt{2\pi \left( t-t^{\prime }\right) ^{3}}}\, dt^{\prime } \\
&-2\Xi _{t}\left( t,0\right) \nu \left( 0\right) \sqrt{\frac{t}{2\pi }}%
+\int_{0}^{t}\frac{\Xi _{t}\left( t,t^{\prime }\right) }{\sqrt{2\pi \left(
t-t^{\prime }\right) }}\nu \left( t^{\prime }\right) \, dt^{\prime } \\
&=\frac{\nu \left( t\right) }{\sqrt{2\pi t}}+\frac{1}{2}\int_{0}^{t}\frac{%
\nu \left( t\right) -\left( \Xi \left( t,t^{\prime }\right) -2\left(
t-t^{\prime }\right) \Xi _{t}\left( t,t^{\prime }\right) \right) \nu \left(
t^{\prime }\right) }{\sqrt{2\pi \left( t-t^{\prime }\right) ^{3}}} \, dt^{\prime
} \\
&=\frac{\nu \left( t\right) }{\sqrt{2\pi t}}+\int_{0}^{t}\left( \nu \left(
t\right) -\left( \Xi \left( t,t^{\prime }\right) -2\left( t-t^{\prime
}\right) \Xi _{t}\left( t,t^{\prime }\right) \right) \nu \left( t^{\prime
}\right) \right) d\left( \frac{1}{\sqrt{2\pi \left( t-t^{\prime }\right) }}%
\right)  \\
&=\int_{0}^{t}\frac{\left( \left( \Xi \left( t,t^{\prime }\right) -2\left(
t-t^{\prime }\right) \Xi _{t}\left( t,t^{\prime }\right) \right) \nu \left(
t^{\prime }\right) \right) _{t^{\prime }}}{\sqrt{2\pi \left( t-t^{\prime
}\right) }}\, dt^{\prime }.
\end{aligned}
\end{equation*}
\end{proof}

\end{appendices}

\bibliographystyle{apalike}
\bibliography{main} 

\end{document}